\numberwithin{equation}{section}
\newtheorem{theorem}{Theorem}[section]
\newtheorem{corollary}[theorem]{Corollary}
\newtheorem{lemma}[theorem]{Lemma}
\theoremstyle{definition}
\newtheorem{definition}[theorem]{Definition}
\newtheorem{remark}[theorem]{Remark}
 \newtheorem{ass}[theorem]{Assumption} 
  \newtheorem{defnass}[theorem]{Assumption and Definition} 
  \newtheorem{asss}[theorem]{Assumptions} 
\newcommand{\norm}[2]{\left\lVert #1\right\rVert_{#2}}
\newcommand{\md}{\partial^\bullet}
\newcommand{\Vs}{V_0}
\newcommand{\Vt}{V(t)}
\newcommand{\Vms}{V_0^{*}}
\newcommand{\Vmt}{V^{*}(t)}
\newcommand{\Hs}{H_0}
\newcommand{\Ht}{H(t)}
\newcommand{\Hms}{H_0^{*}}
\newcommand{\Hmt}{H^{*}(t)}
\newcommand{\Xs}{X_0}
\newcommand{\Xt}{X(t)}
\newcommand{\Xms}{X_0^{*}}
\newcommand{\Xmt}{X^{*}(t)}
\newcommand{\grad}{\nabla}
\newcommand{\sgradt}{\nabla_{\Gamma(t)}}
\newcommand{\slapt}{\Delta_{\Gamma(t)}}
\newcommand{\weaklyto}{\rightharpoonup}
\newcommand{\symbolForLittlec}{\lambda}
\newcommand{\symbolForBigC}{\Lambda}
\title{An abstract framework for parabolic PDEs on evolving spaces}
\author{Amal Alphonse, Charles M. Elliott, and Bj\"orn Stinner}
\affil{Mathematics Institute\\ University of Warwick\\ Coventry CV4 7AL\\ United Kingdom}
\begin{document}

\maketitle

\begin{abstract}
We present an abstract framework for treating the theory of well-posedness of solutions to abstract parabolic partial differential equations on evolving Hilbert spaces. This theory is applicable to variational formulations of PDEs on evolving spatial domains including moving hypersurfaces. We formulate an appropriate time derivative on evolving spaces called the material derivative and define a weak material derivative in analogy with the usual time derivative in fixed domain problems; our setting is abstract and not restricted to evolving domains or surfaces. Then we show well-posedness to a certain class of parabolic PDEs under some assumptions on the parabolic operator and the data.
\end{abstract}

\section{Introduction}
Partial differential equations on evolving or moving domains are an {active} area of research \cite{CorRod13}, \cite{actanumerica}, \cite{reusken}, \cite{similar}, partly because their study leads to interesting analysis but {also because models describing applications such as biological and physical phenomena can be better formulated on evolving domains (including hypersurfaces) rather than on stationary domains. For example, see \cite{Barreira2011}, \cite{Venkataraman2011} for studies of pattern formation on evolving surfaces, \cite{Garcke2014} for the modelling of surfactants in two-phase flows, \cite{Eilks2008} for the modelling and numerical simulation of dealloying by surface dissolution of a binary alloy (involving a forced mean curavture flow coupled to a Cahn--Hilliard equation), \cite{Elliott2009} (and the references therein for applications) for the analysis of a diffuse interface model for a linear surface PDE, and \cite{ElliottStinnerVankataraman} for the modelling and simulation of cell motility.}

One aspect to consider in the study of such equations is how to formulate the space of functions that have domains which evolve in time. Taking a disjoint union of the domains in time to form a non-cylindrical set is standard: see \cite{Bonaccorsi}, \cite{Euclid}, \cite{similar} for example. Of particular interest is \cite{Kloeden} where the problem of a semilinear heat equation on a time-varying domain is considered; the set-up of the evolution of the domains is comparable to ours and similar function space results are shown (in the setting of Sobolev spaces). In \cite{meier}, the authors define Bochner-type spaces by considering a continuous distribution of domains $\{\Gamma(t)\}_{t \in [0,T]} \subset \mathbb{R}^n$ that are embedded in a larger domain $\Gamma$. The aim of our work is to accommodate not only evolving domains but arbitrary evolving spaces. Our method, which follows that of \cite{vierling}, is somewhat different to the aforementioned and contains an attachment to standard Bochner spaces in a fundamental way. 

A common procedure for showing well-posedness of equations on evolving domains involves a transformation of the PDE onto a fixed reference domain to which abstract techniques from functional analysis are applied \cite{LipIFB}, \cite{1187}, \cite{ALE}, \cite{vierling}. For example, in \cite{vierling}, the heat equation 
\begin{equation}\label{eq:introductionHeatEquation}
\dot u(t) - \slapt u (t) + u(t)\sgradt \cdot \mathbf w(t)= f(t)\qquad \text{in $H^{-1}(\Gamma(t))$}
\end{equation}
on an evolving surface $\{\Gamma(t)\}_{t \in [0,T]}$ is considered, with $\mathbf w$ representing the velocity field. The equation is pulled back onto a reference domain $\Gamma(s)$ and standard results on linear parabolic PDEs are applied. A Faedo--Galerkin method (see \cite{benzitoscano} for a historical overview of the method) is used in \cite{1187} (for a different PDE), where the evolving domain is represented by the evolution of a perturbation of the reference domain and \emph{a~priori} estimates are derived for a linearised problem. An adapted Galerkin method that uses the pushforward of eigenfunctions of the Laplace--Beltrami operator on $\Gamma(0)$ to form a countable dense subset of $H^1(\Gamma(t))$ is employed  in \cite{dziuk_elliott} for the advection-diffusion equation \eqref{eq:introductionHeatEquation}. We abstract this  approach for one of our results. Well-posedness for the same class of equations is obtained in \cite{reusken} by employing a variational formulation on space-time surfaces  and utilising a standard generalisation of the classical Lax--Milgram theorem used by Lions for parabolic equations. We also employ this Lions--Lax--Milgram approach in our abstract setting.

As we have seen, there is much literature in which certain equations on evolving domains are studied, however, to the best of our knowledge, there is no unifying theory or framework that treats parabolic PDEs on \emph{abstract} evolving spaces.
The main aim of this paper is to provide this abstract framework. More specifically, given a linear time-dependent operator $A(t)$ we study well-posedness of parabolic problems of the form
\begin{align}
\dot u(t) + A(t)u(t) &= f(t)\qquad\text{}\label{eq:operatorEquationIntro}
\end{align}
as an equality in $V^*(t)$, with $V(t) \subset H(t)$ a Hilbert space for each $t \in [0,T]$. A main feature of our work is the definition of an appropriate time derivative on evolving spaces \emph{in an abstract setting}. When the said spaces are simply $L^p$ spaces on curved or flat surfaces in $\mathbb{R}^n$ that evolve in time, it is commonplace to take the material derivative
\[\dot u(t) = u_t(t) + \grad u(t)\cdot \mathbf w(t)\]
from continuum mechanics as the natural time derivative. But when we have arbitrary spaces that may have no relationship whatsoever with $\mathbb{R}^n$ it is not at all clear what the $\dot u(t)$ in \eqref{eq:operatorEquationIntro} should mean. We will deal with this issue and define a material derivative and a weak material derivative for the abstract case. Our framework relies on the existence of a family of (pushforward) maps $\phi_t$ for $t \in [0,T]$ that allow us to map the initial spaces $V(0)$ and $H(0)$ to the spaces $V(t)$ and $H(t)$. A particular realisation of these maps $\phi_t$ in the case of, for example,  the heat equation (\ref{eq:introductionHeatEquation}) takes into account the evolution of the surfaces $\Gamma(t)$ and hence $\phi_t$ will be  a flow map defined by  the velocity field $\mathbf w.$ {Although one motivation behind this work is the analysis of equations on moving domains and hypersurfaces, the framework can also be useful for problems on fixed domains where, for example, $H(t)$ and $V(t)$ may be weighted Lebesgue--Sobolev spaces with time-dependent weights.}

{Our belief is that the abstract procedure presented in this work is a clean and elegant approach to problems on moving domains. In addition, the theory and concepts presented here can be used as a foundation in extensions such as generalisations to the Banach space setting and the study of nonlinear problems.}
We also anticipate that our framework will benefit those working in numerical analysis since curved, flat, and evolving surfaces can all be treated with the same abstract procedure. 

{In a forthcoming paper \cite{AlpEllStiApplications}, we will demonstrate the applicability of this abstract framework to the case of moving or evolving hypersurfaces. Four different examples of parabolic equations on moving hypersurfaces will be studied, and the well-posedness will be proved using the results we shall give here.}
\subsection{Outline}
In \S \ref{sec:function_spaces}, we start by setting up the function spaces and definitions required for the analysis and indeed the \emph{statement} of equations of the form \eqref{eq:operatorEquationIntro}. We state our assumptions on the evolution of the spaces and define abstract strong and weak material derivatives (in analogy with the usual derivative and weak derivative utilised in fixed domain problems).

In \S \ref{sec:main}
we  precisely formulate the problem \eqref{eq:operatorEquationIntro} that we consider and list the assumptions we make on $A$. Statements of the main theorems of existence, uniqueness, and regularity of solutions are given. The proof of one of these theorems is presented in \S \ref{sec:proofs}. There, we make use of the generalised Lax--Milgram theorem. In \S \ref{sec:galerkinApproximations} we formulate an adapted abstract Galerkin method similar to one described in \cite{dziuk_elliott} and use it to prove a regularity result.

\subsection{Notation and conventions}
Here and below we fix $T \in (0,\infty)$. When we write expressions such as $\phi_{(\cdot)}u(\cdot)$, our intention usually (but not always) is that both of the dots $(\cdot)$ denote the same argument; for example,
$\phi_{(\cdot)}u(\cdot)$ will come to mean the map $t \mapsto \phi_tu(t).$ The notation $X^*$ will denote the dual space of a Hilbert space $X$ and $X^*$ will be equipped with the usual induced norm $\norm{f}{X^*} = \sup_{x \in X\backslash \{0\}} \langle f, x \rangle_{X^*,X} / \norm{x}{X}$. We may reuse the same constants in calculations multiple times if their exact value is not relevant. Integrals will usually be written as $\int_S f(s)$ instead of $\int_S f(s)\;\mathrm{d}s$ unless to avoid ambiguity. Finally, we shall make use of standard notation for Bochner spaces; for example, see \cite[\S 5.9]{evans}.
\section{Function spaces}\label{sec:function_spaces}
As we mentioned above, in order to properly understand and express the equation \eqref{eq:operatorEquationIntro}, we need to devise appropriate spaces of functions. First, we begin with recalling some standard results regarding Sobolev--Bochner spaces from parabolic theory for the reader's convenience; a good reference for this is \cite[\S XVIII]{DautrayLions}.

\subsection{Standard Sobolev--Bochner space theory}\label{sec:1.1}
Let $\mathcal V$ and $\mathcal H$ be Hilbert spaces and let $\mathcal{V} \subset \mathcal H \subset \mathcal V^*$ be a Gelfand triple (i.e., all embeddings are continuous and dense and $\mathcal H$ is identified with its dual via the Riesz representation theorem). Recall that $u \in L^2(0,T;\mathcal V)$ is said to have a \emph{weak derivative} $u' \in L^2(0,T;\mathcal V^*)$ if there exists $w\in L^2(0,T;\mathcal V^*)$ such that

\begin{equation}\label{eq:defnWeakDer}
\int_0^T \zeta'(t)(u(t),v)_{\mathcal H} = -\int_0^T \zeta(t)\langle w(t),v\rangle _{\mathcal V^{*},\mathcal V} \qquad \text{for all $\zeta \in \mathcal{D}(0,T)$ and $v\in \mathcal V$},
\end{equation}
and one writes $w=u'$. By $\mathcal{D}(0,T)$ we refer to the space of infinitely differentiable functions with compact support in $(0,T)$. We shall also make use of $\mathcal{D}([0,T];\mathcal V)$; this is the space of {$\mathcal V$-valued} infinitely differentiable functions compactly supported in the \emph{closed} interval $[0,T]$. A helpful characterisation of this space, from Lemma 25.1 in \cite[\S IV.25]{wloka}, is that $\mathcal{D}([0,T];\mathcal V)$ is the restriction $\mathcal{D}((-\infty, \infty);\mathcal V)|_{[0,T]}$ (the restriction to $[0,T]$ of infinitely differentiable {$\mathcal{V}$-valued} functions with compact support).
\begin{lemma}\label{lem:spaceW0}
The space 
\[
\mathcal W(\mathcal V, \mathcal V^*) = \{u \in L^2(0,T;\mathcal V) \mid u' \in L^2(0,T; \mathcal V^*)\}
\]
with inner product 
\[ (u,v)_{\mathcal W(\mathcal V, \mathcal V^*)} = \int_0^T (u(t), v(t))_{\mathcal V} + \int_0^T(u'(t), v'(t))_{\mathcal V^*} \]
is a Hilbert space. Furthermore,
\begin{itemize}
\item[1.] The embedding $\mathcal W(\mathcal V, \mathcal V^*) \subset C([0,T]; \mathcal H)$ is continuous. 
\item[2.]The embedding $\mathcal{D}([0,T];\mathcal V) \subset \mathcal W(\mathcal V, \mathcal V^*)$ is dense.
\item[3.]For $u$, $v \in \mathcal W(\mathcal V, \mathcal V^*)$, the map $t \mapsto (u(t), v(t))_{\mathcal H}$ is absolutely continuous on $[0,T]$ and
\[
\frac{d}{dt}(u(t), v(t))_{\mathcal H} = \langle u'(t), v(t) \rangle_{\mathcal V^*, \mathcal V} + \langle u(t), v'(t) \rangle_{\mathcal V, \mathcal V^*}
\]
for almost every $t \in [0,T]$, hence the integration by parts formula
\begin{equation*}\label{eq:FOPI}
(u(T), v(T))_{\mathcal H} - ( u(0), v(0) )_{\mathcal H}=     \int_0^T \langle u'(t), v(t) \rangle_{\mathcal V^*, \mathcal V} + \int_0^T\langle u(t), v'(t) \rangle_{\mathcal V, \mathcal V^*}
\end{equation*}
holds.
\end{itemize}
\end{lemma}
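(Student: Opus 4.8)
The plan is to establish the Hilbert space structure first, then the density statement (item 2), and finally to deduce items 1 and 3 together from that density. That the form $(\cdot,\cdot)_{\mathcal{W}(\mathcal V,\mathcal V^*)}$ is a symmetric positive-definite bilinear form is immediate, since it is the sum of the inner products inherited from $L^2(0,T;\mathcal V)$ and $L^2(0,T;\mathcal V^*)$; so only completeness requires work. Given a Cauchy sequence $(u_n)$ in $\mathcal W(\mathcal V,\mathcal V^*)$, the sequences $(u_n)$ and $(u_n')$ are Cauchy in $L^2(0,T;\mathcal V)$ and $L^2(0,T;\mathcal V^*)$ respectively, and by completeness of these Bochner spaces they converge to limits $u$ and $w$. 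It then remains to identify $w=u'$, which I would do by writing the defining relation \eqref{eq:defnWeakDer} for each $u_n$ and passing to the limit: the left-hand side converges because $u_n\to u$ in $L^2(0,T;\mathcal V)\hookrightarrow L^2(0,T;\mathcal H)$, while the right-hand side converges because $u_n'\to w$ in $L^2(0,T;\mathcal V^*)$. Hence $w=u'$ and $u_n\to u$ in $\mathcal W(\mathcal V,\mathcal V^*)$.

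The density statement (item 2) is the crux, and I expect it to be the main obstacle. The standard route is by regularisation in time: given $u\in\mathcal W(\mathcal V,\mathcal V^*)$, one first extends it to a function defined on a neighbourhood of $[0,T]$ and then mollifies with a standard mollifier $\rho_\eps$. Convolution commutes with the weak time derivative, so $(\rho_\eps*u)'=\rho_\eps*u'$, and the usual properties of mollifiers yield $\rho_\eps*u\to u$ in $L^2(0,T;\mathcal V)$ and $\rho_\eps*u'\to u'$ in $L^2(0,T;\mathcal V^*)$ as $\eps\to0$, the mollified functions being smooth in time and $\mathcal V$-valued. The characterisation $\mathcal{D}([0,T];\mathcal V)=\mathcal{D}((-\infty,\infty);\mathcal V)|_{[0,T]}$ quoted from \cite{wloka} is precisely what lets one conclude that these regularisations lie in $\mathcal{D}([0,T];\mathcal V)$. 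The delicate point—where the endpoints $0$ and $T$ intervene—is arranging the extension so that it does not destroy membership in the Sobolev--Bochner class on the enlarged interval and so that the approximation is controlled up to the boundary.

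Finally, items 1 and 3 follow together from density. For $u,v\in\mathcal{D}([0,T];\mathcal V)$ the classical product rule and fundamental theorem of calculus give the identity in item 3 directly. Taking $u=v$ and integrating yields, for smooth functions,
\[
\norm{u(t)}{\mathcal H}^2=\norm{u(s)}{\mathcal H}^2+2\int_s^t \langle u'(\tau),u(\tau)\rangle_{\mathcal V^*,\mathcal V},
\]
and averaging in $s$ over $[0,T]$ together with the bound $\lvert\langle u',u\rangle_{\mathcal V^*,\mathcal V}\rvert\le\norm{u'}{\mathcal V^*}\norm{u}{\mathcal V}$ and the continuity of $\mathcal V\hookrightarrow\mathcal H$ produces the \emph{a~priori} estimate $\norm{u}{C([0,T];\mathcal H)}\le C\norm{u}{\mathcal W(\mathcal V,\mathcal V^*)}$. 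Applied to differences, this estimate shows that a sequence in $\mathcal{D}([0,T];\mathcal V)$ converging in $\mathcal W(\mathcal V,\mathcal V^*)$ is Cauchy in $C([0,T];\mathcal H)$, whose limit is the sought continuous representative; this gives item 1. The bilinear identity and the integration-by-parts formula of item 3 then pass to the limit under this continuous embedding together with the convergence in $\mathcal W(\mathcal V,\mathcal V^*)$, completing the proof.
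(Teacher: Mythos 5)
Your argument is correct and is precisely the standard proof of this classical result; the paper itself gives no proof, deferring instead to Theorem 2.1 of Lions--Magenes for the density statement and to Proposition 1.2 and Corollary 1.1 in Showalter for the rest, and those references argue exactly as you do (completeness via the two Bochner spaces, density by extension and mollification in time, then the \emph{a priori} bound $\norm{u}{C([0,T];\mathcal H)}\le C\norm{u}{\mathcal W(\mathcal V,\mathcal V^*)}$ for smooth functions and a limiting argument for items 1 and 3). Your ordering --- density first, then the embedding and the product rule deduced from it --- is the right one, and the only point left as a sketch, the boundary-compatible extension before mollifying, is exactly the technical step handled in the cited sources.
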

\begin{proof}
The density result is Theorem 2.1 in \cite[\S 1.2]{lionsmagenes}. For the rest, consult Proposition 1.2 and Corollary 1.1 in \cite[\S III.1]{showalter}.
\end{proof}
We can characterise the weak derivative in terms of vector-valued test functions. This is useful because it more closely resembles the weak material derivative that we shall define later on.
\begin{theorem}[Alternative characterisation of the weak derivative]
The weak derivative condition \eqref{eq:defnWeakDer}
is equivalent to
\begin{equation*}\label{eq:defnWeakDerAlt}
\int_0^T (u(t), \psi'(t) )_{\mathcal H} = -\int_0^T \langle u'(t), \psi(t) \rangle_{\mathcal V^*, \mathcal V} \qquad \text{for all $\psi \in \mathcal{D}((0,T); \mathcal V)$}.
\end{equation*}
\end{theorem}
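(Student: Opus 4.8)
The plan is to prove the two implications separately, the forward one (from \eqref{eq:defnWeakDer} to the vector-valued form) being the substantial one. The reverse implication is immediate: given the vector-valued identity, for fixed $v \in \mathcal{V}$ and $\zeta \in \mathcal{D}(0,T)$ I would insert the test function $\psi(t) = \zeta(t) v$, which lies in $\mathcal{D}((0,T);\mathcal{V})$ and satisfies $\psi'(t) = \zeta'(t) v$. Since $(u(t), \psi'(t))_{\mathcal{H}} = \zeta'(t)(u(t),v)_{\mathcal{H}}$ and $\langle u'(t), \psi(t)\rangle_{\mathcal{V}^*,\mathcal{V}} = \zeta(t)\langle u'(t), v\rangle_{\mathcal{V}^*,\mathcal{V}}$, this recovers \eqref{eq:defnWeakDer} exactly.

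For the forward implication, I would first note that by linearity \eqref{eq:defnWeakDer} extends to every finite \emph{separated} test function $\psi(t) = \sum_{i=1}^n \zeta_i(t) v_i$ with $\zeta_i \in \mathcal{D}(0,T)$ and $v_i \in \mathcal{V}$. It then remains to approximate an arbitrary $\psi \in \mathcal{D}((0,T);\mathcal{V})$ by such separated functions in a topology strong enough to pass to the limit in both integrals. Because $u \in L^2(0,T;\mathcal{V}) \hookrightarrow L^2(0,T;\mathcal{H})$ and $u' \in L^2(0,T;\mathcal{V}^*)$, it suffices to produce a sequence $\psi_N$ of separated functions with $\psi_N \to \psi$ and $\psi_N' \to \psi'$ in $L^2(0,T;\mathcal{V})$: the first convergence controls $\int_0^T \langle u'(t), \psi_N(t)\rangle_{\mathcal{V}^*,\mathcal{V}}$ and the second controls $\int_0^T (u(t), \psi_N'(t))_{\mathcal{H}}$ via the continuous embedding into $L^2(0,T;\mathcal{H})$.

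To build such a sequence I would localise to a separable setting. Since $\psi$ and $\psi'$ are continuous $\mathcal{V}$-valued functions on the compact interval $[0,T]$, their ranges are compact, so the closed linear span $\mathcal{V}_0 \subset \mathcal{V}$ of $\operatorname{range}(\psi)$ is a separable Hilbert space which also contains $\operatorname{range}(\psi')$ (the difference quotients defining $\psi'$ lie in the closed subspace $\mathcal{V}_0$). Choosing an orthonormal basis $\{v_k\}_{k\ge 1}$ of $\mathcal{V}_0$ and setting $\zeta_k(t) = (\psi(t), v_k)_{\mathcal{V}}$, the coefficient functions are smooth and compactly supported in $(0,T)$ (inheriting the support of $\psi$), so the truncations $\psi_N(t) = \sum_{k=1}^N \zeta_k(t) v_k$ are admissible separated test functions. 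These are precisely the orthogonal projections $\psi_N = P_N \psi$, and since $P_N$ commutes with differentiation one has $\psi_N' = P_N \psi'$. For each fixed $t$ both $P_N\psi(t) \to \psi(t)$ and $P_N \psi'(t) \to \psi'(t)$ in $\mathcal{V}$, with the pointwise bounds $\norm{\psi_N(t)}{\mathcal{V}} \le \norm{\psi(t)}{\mathcal{V}}$ and likewise for the derivative; dominated convergence then yields $\psi_N \to \psi$ and $\psi_N' \to \psi'$ in $L^2(0,T;\mathcal{V})$. Passing to the limit in the separated version of \eqref{eq:defnWeakDer} gives the claimed identity.

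The main obstacle is exactly this approximation step, where one must approximate the vector-valued $\psi$ \emph{together with} its derivative by functions of separated form; the orthonormal-basis projection resolves it cleanly, since projecting onto $\operatorname{span}(v_1,\dots,v_N)$ handles $\psi$ and $\psi'$ simultaneously and furnishes smooth, compactly supported scalar coefficients. I would finally double-check that no boundary contributions intrude, which they do not, as all test functions are supported in the open interval $(0,T)$.
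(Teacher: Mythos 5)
Your proof is correct. The paper states this theorem without proof, so there is nothing to compare against directly, but your argument is the natural one: the reverse implication by inserting $\psi(t)=\zeta(t)v$, and the forward implication by approximating $\psi\in\mathcal{D}((0,T);\mathcal V)$ with separated test functions via orthogonal projection onto the span of an orthonormal basis (of the separable closed subspace generated by the compact range of $\psi$), using that the projection commutes with differentiation and dominated convergence to get $\psi_N\to\psi$ and $\psi_N'\to\psi'$ in $L^2(0,T;\mathcal V)$. This is precisely the device the authors themselves deploy later in the proof of Lemma \ref{cor:densityDV}, and your localisation to the closed linear span of $\operatorname{range}(\psi)$ correctly handles the fact that $\mathcal V$ is not assumed separable in \S\ref{sec:1.1}.
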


We finish this subsection with some words on measurability.
\begin{definition}[Weak measurability]\label{defn:weakMeasurability}Let $X$ be a Hilbert space. A function $f\colon [0,T] \to X$ is \emph{weakly measurable} if for every $x \in X$, the map 
$t \mapsto (f(t), x)_X$ 
is measurable on $[0,T]$.
\end{definition}
Strong (or Bochner) measurability implies weak measurability. If the Hilbert space $X$ turns out to be separable, then both notions of measurability are equivalent thanks to Pettis' theorem \cite[\S 1.5, Theorem 1.34]{roubicek}.

\subsection{Evolving spaces}\label{sec:assumptionsEvolution}
Now we shall define Bochner-type function spaces to treat evolving spaces. We start with some notation and concepts  on the evolution itself. We informally identify a family of Hilbert spaces $\{X(t)\}_{t \in [0,T]}$ with the symbol $X$, and given a family of maps $\phi_{t}\colon \Xs \to \Xt$ 
we define the following notion of \textbf{compatibility} of the pair $(X, (\phi_{t})_{t \in [0,T]})$. 
\begin{definition}[Compatibility]\label{comp}
We say that a pair $(X, (\phi_{t})_{t \in [0,T]})$ is \emph{compatible} if all of the following conditions hold.

For each $t \in [0,T]$, $X(t)$ is a real separable Hilbert space (with $X_0 := X(0)$) and the map
\[\phi_{t}\colon X_0 \to X(t)\]
is a linear homeomorphism such that $\phi_0$ is the identity. By $\phi_{-t}\colon \Xt \to \Xs$ we denote the inverse of $\phi_t.$   Furthermore, we will assume that there exists a constant $C_X$ independent of $t \in [0,T]$ such that
\begin{equation*}
\begin{aligned}
\norm{\phi_t u}{X(t)} &\leq C_X\norm{u}{X_0}&&\forall u \in X_0\\
\norm{\phi_{-t} u}{\Xs} &\leq C_X\norm{u}{\Xt}&&\forall u \in \Xt.
\end{aligned}
\end{equation*}
Finally, we assume that the map
\begin{equation*}
\begin{aligned}
&t \mapsto \norm{\phi_t u}{X(t)}\qquad &&\forall u \in X_0
\end{aligned}
\end{equation*}
is continuous. 
\end{definition}
We often write the pair as $(X, \phi_{(\cdot)})$ for convenience. We call $\phi_{t}$ and $\phi_{-t}$ the \emph{pushforward} and \emph{pullback} maps respectively. In the following we will assume compatibility of $(X, \phi_{(\cdot)})$. As a consequence of these assumptions, we have that the dual operator of $\phi_t$, denoted $\phi_t^*\colon X^*(t) \to X_0^*$, is itself a linear homeomorphism, as is its inverse $\phi_{-t}^*\colon \Xms \to \Xmt$, and they satisfy
\begin{equation*}
\begin{aligned}
\norm{\phi_t^* f}{X_0^*} &\leq C_X\norm{f}{X^*(t)} &&\forall f \in X^*(t)\\
\norm{\phi_{-t}^* f}{\Xmt} &\leq C_X\norm{f}{\Xms} &&\forall f \in \Xms.
\end{aligned}
\end{equation*}
By separability of $X_0$, it also follows that the map
\begin{align*}
t \mapsto \norm{\phi_{-t}^* f}{X^*(t)}\quad \forall f \in X^*_0
\end{align*}
is measurable.
\begin{remark}
{If we define $U(t,s)\colon X(s) \to X(t)$ by $U(t,s) := \phi_t \phi_{-s}$ for $s$, $t \in [0,T]$, it can be readily seen from 
$U(t,r)U(r,s) = \phi_t \phi_{-r}\phi_r \phi_{-s} = \phi_t\phi_{-s} = U(t,s)$ that the family of operators $U(t,s)$ is a two-parameter semigroup. }
\end{remark}
\begin{remark}
Note that the above implies the equivalence of norms
\begin{equation*}
\begin{aligned}
C_X^{-1}\norm{u}{\Xs} &\leq \norm{\phi_t u}{\Xt} \leq C_X\norm{u}{\Xs}&&\forall u \in \Xs,\\
C_X^{-1}\norm{f}{\Xmt} &\leq \norm{\phi_t^* f}{\Xms} \leq C_X \norm{f}{\Xmt} &&\forall f \in \Xmt.
\end{aligned}
\end{equation*}
\end{remark}
We now define appropriate time-dependent function spaces to handle functions defined on evolving spaces. Our spaces are generalisations of those defined in \cite{vierling}.
\begin{definition}[The spaces $L^2_X$ and $L^2_{X^*}$]
Define the spaces 
\begin{align*}
L^2_X &= \{u:[0,T] \to \!\!\!\!\bigcup_{t \in [0,T]}\!\!\!\! X(t) \times \{t\}, t \mapsto (\bar u(t), t) \mid \phi_{-(\cdot)} \bar u(\cdot) \in L^2(0,T;X_0)\}\\
L^2_{X^*} &= \{f:[0,T] \to \!\!\!\!\bigcup_{t \in [0,T]}\!\!\!\! X^*(t) \times \{t\},t \mapsto (\bar f(t), t) \mid \phi_{(\cdot)}^* \bar f(\cdot) \in L^2(0,T;X^*_0) \}.
\end{align*}
More precisely, these spaces consist of equivalence classes of functions agreeing almost everywhere in $[0,T]$, just like ordinary Bochner spaces. 
\end{definition}
We first show that these spaces are inner product spaces, and later we prove that they are in fact Hilbert spaces. For $u \in L^2_X$, we will make an abuse of notation and identify $u(t) = (\bar u(t), t)$ with $\bar u(t)$ (and likewise for $f \in L^2_{X^*}$).
\begin{theorem}\label{thm:ips}The spaces $L^2_X$ and $L^2_{X^*}$ are inner product spaces with the inner products
\begin{equation}\label{eq:innerProductOnL2X}
\begin{aligned}
(u, v)_{L^2_X} &= \int_0^T (u(t), v(t))_{X(t)}\;\mathrm{d}t\\
(f, g)_{L^2_{X^*}} &= \int_0^T (f(t), g(t))_{X^*(t)}\;\mathrm{d}t.\\
\end{aligned}
\end{equation}
\end{theorem}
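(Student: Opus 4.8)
The plan is to verify that the formulas in \eqref{eq:innerProductOnL2X} define genuine inner products; the only nontrivial point is \emph{well-definedness}, namely that for $u$, $v \in L^2_X$ the integrand $t \mapsto (u(t), v(t))_{\Xt}$ is measurable and integrable. Once this is in hand, the inner product axioms are immediate: symmetry and bilinearity descend from the corresponding properties of $(\cdot, \cdot)_{\Xt}$ together with linearity of the integral, while positive definiteness follows because $(u,u)_{L^2_X} = \int_0^T \norm{u(t)}{\Xt}^2 = 0$ forces $\norm{u(t)}{\Xt} = 0$, hence $u(t) = 0$, for almost every $t$, which by linearity of $\phi_{-t}$ means $\phi_{-(\cdot)}u(\cdot)$ vanishes in $L^2(0,T;\Xs)$ and so $u = 0$ in $L^2_X$. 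The argument for $L^2_{X^*}$ is entirely parallel, using the dual maps in place of the $\phi_t$.

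The heart of the matter is measurability, and the obstacle is that the inner product $(\cdot,\cdot)_{\Xt}$ itself varies with $t$, so strong measurability of $\phi_{-(\cdot)}u(\cdot)$ in the fixed space $\Xs$ does not obviously transfer. My strategy is a two-step reduction. First, for \emph{fixed} $a$, $b \in \Xs$ I would show that $t \mapsto (\phi_t a, \phi_t b)_{\Xt}$ is measurable by invoking the polarisation identity
\begin{equation*}
(\phi_t a, \phi_t b)_{\Xt} = \tfrac14\left(\norm{\phi_t(a+b)}{\Xt}^2 - \norm{\phi_t(a-b)}{\Xt}^2\right),
\end{equation*}
where I have used linearity of $\phi_t$; each term on the right is continuous in $t$ by the compatibility assumption that $t \mapsto \norm{\phi_t w}{\Xt}$ is continuous, so the left-hand side is continuous, in particular measurable.

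Second, I would pass from fixed vectors to the genuinely $t$-dependent integrand by approximation. Writing $a(t) := \phi_{-t} u(t)$ and $b(t) := \phi_{-t} v(t)$, both lie in $L^2(0,T;\Xs)$ and hence are strong limits almost everywhere of simple $\Xs$-valued functions $a_n$, $b_n$. For such simple functions $(\phi_t a_n(t), \phi_t b_n(t))_{\Xt}$ is a finite sum of characteristic functions times terms of the form $(\phi_t a_i, \phi_t b_j)_{\Xt}$, which are measurable by the first step; hence the simple-function integrand is measurable. Passing to the limit, the uniform bound $\norm{\phi_t w}{\Xt} \leq C_X \norm{w}{\Xs}$ gives, via Cauchy--Schwarz in $\Xt$,
\begin{equation*}
\abs{(\phi_t a_n(t), \phi_t b_n(t))_{\Xt} - (\phi_t a(t), \phi_t b(t))_{\Xt}} \leq C_X^2\left(\norm{a_n(t)-a(t)}{\Xs}\norm{b_n(t)}{\Xs} + \norm{a(t)}{\Xs}\norm{b_n(t)-b(t)}{\Xs}\right),
\end{equation*}
which tends to $0$ for almost every $t$, so the integrand is an almost-everywhere limit of measurable functions and therefore measurable. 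Integrability then follows from the same uniform bound: Cauchy--Schwarz (first in $\Xt$, then in $L^2(0,T)$) yields $\abs{(u,v)_{L^2_X}} \leq C_X^2 \norm{a}{L^2(0,T;\Xs)}\norm{b}{L^2(0,T;\Xs)} < \infty$. For $L^2_{X^*}$ the identical scheme applies, except that the first step now uses only the stated \emph{measurability} (rather than continuity) of $t \mapsto \norm{\phi_{-t}^* f}{\Xmt}$, which is all the polarisation argument requires. The measurability reduction is where the real work lies; everything downstream is routine.
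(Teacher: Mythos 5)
Your proof is correct, and its skeleton is the same as the paper's: everything reduces to the measurability of $t \mapsto \norm{\phi_t x}{\Xt}$ for fixed $x \in \Xs$ (continuity, from the compatibility assumption), respectively the measurability of $t \mapsto \norm{\phi_{-t}^* x^*}{\Xmt}$ for the dual case, followed by a transfer to genuinely $t$-dependent arguments. The difference is in how the transfer is executed. The paper polarises at the level of $L^2_X$ (so that it suffices to show $t \mapsto \norm{u(t)}{\Xt}^2$ is integrable for a single $u \in L^2_X$), observes that $F(t,x) = \norm{\phi_t x}{\Xt}$ is a Carath\'eodory function with linear growth, and cites Nemytskii-operator theory to conclude that $N_F$ maps $L^2(0,T;\Xs)$ into $L^2(0,T)$. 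You instead polarise pointwise for fixed vectors, and then carry out by hand the simple-function approximation of $\phi_{-(\cdot)}u(\cdot)$ and $\phi_{-(\cdot)}v(\cdot)$, passing to the a.e.\ limit with the uniform bound $\norm{\phi_t w}{\Xt} \leq C_X\norm{w}{\Xs}$ and Cauchy--Schwarz. This is essentially an inlined proof of the special case of the cited Nemytskii result: it buys a self-contained argument at the cost of a little length, and it correctly isolates the one asymmetry between the two cases, namely that for $L^2_{X^*}$ only measurability (not continuity) of $t \mapsto \norm{\phi_{-t}^* f}{\Xmt}$ is available, which is all the argument needs. Your treatment of the inner product axioms, in particular positive definiteness via $u(t) = 0$ a.e.\ forcing $\phi_{-(\cdot)}u(\cdot) = 0$ in $L^2(0,T;\Xs)$, matches what the paper leaves as "easy to verify".
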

\begin{proof}
It is easy to verify that the expressions in \eqref{eq:innerProductOnL2X} define inner products if the integrals on the right hand sides are well-defined, which we now check. For the $L^2_X$ case, it suffices to show that $\norm{u(t)}{X(t)}^2$ is integrable for every $u \in L^2_X$. So let $u \in L^2_X$. Then $\tilde u := \phi_{-(\cdot)}u(\cdot) \in L^2(0,T;X_0)$. Define $F\colon [0,T] \times X_0 \to \mathbb{R}$ by $F(t,x) = \norm{\phi_t x}{X(t)}$. By assumption, $t \mapsto F(t,x)$ is measurable for all $x \in X_0$, and if $x_n \to x$ in $X_0$, then by the reverse triangle inequality,
\begin{align*}
|F(t,x_n) - F(t,x)| \leq \norm{\phi_t(x_n-x)}{X(t)} \leq C_X\norm{x_n -x}{X_0} \to 0,
\end{align*} 
so $x \mapsto F(t,x)$ is continuous. Thus $F$ is a Carath\'eodory function. Due to the condition $|F(t,x)| \leq C_X\norm{x}{X_0}$, by Remark 3.4.5 of \cite{gasinski}, the Nemytskii operator $N_F$ defined by $(N_F x)(t) := F(t,x(t))$ maps  $L^2(0,T;X_0) \to L^2(0,T)$, so that
\[\norm{N_F \tilde u}{L^2(0,T)}^2 = \int_0^T \norm{u(t)}{X(t)}^2 < \infty.\]
This proves the theorem for $L^2_X$. The process is the same for the case of $L^2_{X^*}$ except we replace $\phi_{-t}$ and $\phi_t$ with the dual maps $\phi_{t}^*$ and $\phi_{-t}^*$.
\end{proof}
\begin{lemma}\label{cor:convergenceSimpleMeasurableFunctions}
Let $u \in L^2_X$ and $f \in L^2_{X^*}.$ Then there exist simple measurable functions $u_n \in L^2(0,T;X_0)$ and $f_n \in L^2(0,T;X_0^*)$ such that for almost every $t \in [0,T]$,
\begin{equation*}
\begin{aligned}[2]
\phi_tu_n(t)& \to u(t) \qquad &&\text{in $X(t)$}\\
\phi_{-t}^*f_n(t)& \to f(t) \qquad &&\text{in $X^*(t)$}
\end{aligned}
\end{equation*}
as $n \to \infty.$
\end{lemma}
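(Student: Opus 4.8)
The plan is to exploit the definition of the spaces $L^2_X$ and $L^2_{X^*}$, which are built precisely so that pulling back by $\phi_{-(\cdot)}$ (respectively pushing forward by $\phi^*_{(\cdot)}$) lands us in an ordinary Bochner space. First I would take $u \in L^2_X$ and set $\tilde u := \phi_{-(\cdot)} u(\cdot) \in L^2(0,T; X_0)$. Since $X_0$ is a separable Hilbert space, $\tilde u$ is strongly (Bochner) measurable, so by the standard approximation theorem for Bochner-measurable functions there exist simple measurable functions $u_n \in L^2(0,T; X_0)$ with $u_n(t) \to \tilde u(t)$ in $X_0$ for almost every $t \in [0,T]$. (One can even arrange $\norm{u_n(t) - \tilde u(t)}{X_0} \to 0$ monotonically or with domination, but pointwise a.e.\ convergence is all that is needed here.)

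The key step is then to transport this convergence forward onto $X(t)$ using the uniform boundedness of the pushforward maps. For almost every $t$ we have, by the compatibility bound from Definition \ref{comp},
\begin{equation*}
\norm{\phi_t u_n(t) - u(t)}{X(t)} = \norm{\phi_t(u_n(t) - \tilde u(t))}{X(t)} \leq C_X \norm{u_n(t) - \tilde u(t)}{X_0} \to 0,
\end{equation*}
where I have used $u(t) = \phi_t \tilde u(t)$ (the very definition of $\tilde u$). This gives $\phi_t u_n(t) \to u(t)$ in $X(t)$ for a.e.\ $t$, as required. The dual statement is entirely analogous: for $f \in L^2_{X^*}$ I would set $\tilde f := \phi^*_{(\cdot)} f(\cdot) \in L^2(0,T; X_0^*)$, approximate it by simple functions $f_n \in L^2(0,T; X_0^*)$ converging a.e.\ in $X_0^*$ (here separability of $X_0$, hence of $X_0^*$, is what licenses the Bochner approximation), and then apply $\phi^*_{-t}$ together with the dual norm bound $\norm{\phi^*_{-t} g}{X^*(t)} \leq C_X \norm{g}{X_0^*}$ to obtain $\phi^*_{-t} f_n(t) \to f(t)$ in $X^*(t)$. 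Note that $\phi^*_{-t}(\phi^*_t f(t)) = f(t)$ because $\phi^*_{-t}$ is the inverse of $\phi^*_t$, so the telescoping works exactly as before.

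I do not expect a genuine obstacle here; the statement is essentially a transfer of the classical density of simple functions in $L^2(0,T; X_0)$ through the homeomorphisms $\phi_t$. The only points requiring mild care are, first, making sure the approximating simple functions are valued in the \emph{initial} space $X_0$ (so that $\phi_t$ and $\phi^*_{-t}$ are the natural maps to apply), and second, correctly matching the direction of the maps in the dual case — the space $L^2_{X^*}$ is defined via the pushforward $\phi^*_{(\cdot)}$, so the recovery map is the pullback $\phi^*_{-(\cdot)}$, whereas for $L^2_X$ the space is defined via the pullback $\phi_{-(\cdot)}$ and the recovery map is the pushforward $\phi_{(\cdot)}$. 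Getting these adjoint bookkeeping details right, and invoking separability to justify strong measurability of $\tilde u$ and $\tilde f$, is really the whole content of the proof.
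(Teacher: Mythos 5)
Your proposal is correct and is exactly the argument the paper intends: the paper's own "proof" is the one-line remark that the lemma follows from the density of simple measurable functions in $L^2(0,T;X_0)$, and you fill in precisely those details — pull back to $\tilde u = \phi_{-(\cdot)}u(\cdot)$, approximate a.e.\ by simple functions, and push forward using the uniform bound $\norm{\phi_t w}{X(t)} \le C_X\norm{w}{X_0}$ (and its dual analogue). The bookkeeping of which map recovers $u$ from $\tilde u$ and $f$ from $\tilde f$ is also handled correctly.
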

This lemma can be proved by using the density of simple measurable functions in $L^2(0,T;X_0)$. The following result is required to show that the above spaces are complete.
\begin{lemma}[Isomorphism with standard Bochner spaces]\label{lem:pullbackIsInL2X}
The maps
\begin{alignat*}{2}
u &\mapsto \phi_{(\cdot)}u(\cdot) \qquad&&\text{from $L^2(0,T;X_0)$ to $L^2_X$}\\
f &\mapsto \phi_{-(\cdot)}^*f(\cdot)&&\text{from $L^2(0,T;X_0^*)$ to $L^2_{X^*}$}
\end{alignat*}
are both isomorphisms between the respective spaces.
\end{lemma}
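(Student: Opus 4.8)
The strategy is to write down explicit two-sided inverses and then read off the required boundedness from the pointwise norm equivalences recorded in the remarks following Definition~\ref{comp}. For the first map set $\Phi(u)(t) := \phi_t u(t)$ for $u \in L^2(0,T;X_0)$, with candidate inverse the pullback $\Psi(v)(t) := \phi_{-t} v(t)$ for $v \in L^2_X$; for the second, set $\Phi^*(f)(t) := \phi_{-t}^* f(t)$ with candidate inverse $\Psi^*(g)(t) := \phi_t^* g(t)$. All four maps are linear because each of $\phi_t$, $\phi_{-t}$, $\phi_t^*$, $\phi_{-t}^*$ is linear. Well-definedness is almost immediate from the definitions of $L^2_X$ and $L^2_{X^*}$: since $\phi_{-t}\phi_t = \operatorname{id}_{X_0}$ we have $\phi_{-(\cdot)}\Phi(u)(\cdot) = u \in L^2(0,T;X_0)$, so $\Phi(u) \in L^2_X$, and conversely $\Psi(v) \in L^2(0,T;X_0)$ holds by the very defining condition of membership in $L^2_X$ (and analogously in the dual case).

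Bijectivity then follows from pure composition identities: the relations $\phi_{-t}\phi_t = \operatorname{id}_{X_0}$ and $\phi_t\phi_{-t} = \operatorname{id}_{X(t)}$ give $\Psi\circ\Phi = \operatorname{id}$ and $\Phi\circ\Psi = \operatorname{id}$, so $\Phi$ is a bijection with inverse $\Psi$. To upgrade this to a topological isomorphism I would invoke the uniform equivalence $C_X^{-1}\norm{w}{X_0} \le \norm{\phi_t w}{X(t)} \le C_X\norm{w}{X_0}$; applying it with $w = u(t)$ and integrating the square over $[0,T]$ yields
\[
C_X^{-2}\norm{u}{L^2(0,T;X_0)}^2 \le \norm{\Phi(u)}{L^2_X}^2 \le C_X^2\norm{u}{L^2(0,T;X_0)}^2,
\]
which shows that $\Phi$ is bounded with bounded inverse, the finiteness of the middle quantity being guaranteed already by Theorem~\ref{thm:ips}.

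The dual statement is proved identically, using $\phi_t^*\phi_{-t}^* = \operatorname{id}_{X_0^*}$ and $\phi_{-t}^*\phi_t^* = \operatorname{id}_{X^*(t)}$ for bijectivity and the equivalence $C_X^{-1}\norm{g}{X^*(t)} \le \norm{\phi_t^* g}{X_0^*} \le C_X\norm{g}{X^*(t)}$ for the norm bounds. Since the whole argument reduces to pointwise linear-algebra identities together with $t$-uniform norm equivalences, there is no genuine obstacle; the only steps deserving a moment's care are confirming that $\phi_{t}^*$ and $\phi_{-t}^*$ are honestly mutual inverses between $X_0^*$ and $X^*(t)$ --- which follows from $(\phi_t\phi_{-t})^* = \phi_{-t}^*\phi_t^*$ and $(\operatorname{id})^* = \operatorname{id}$ --- and observing that the abstract measurability demanded in the definitions of $L^2_X$ and $L^2_{X^*}$ is automatically transported, precisely because membership in those spaces is \emph{defined} through the pullback (respectively pushforward) being an ordinary Bochner-measurable function.
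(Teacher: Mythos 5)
Your proof is correct and follows essentially the same route as the paper: the same explicit map $u \mapsto \phi_{(\cdot)}u(\cdot)$ with inverse $v \mapsto \phi_{-(\cdot)}v(\cdot)$, with boundedness in both directions read off from the uniform constants $C_X$ (the paper records that norm equivalence separately as Lemma~\ref{lem:equivalenceOfNorms}). Your observation that well-definedness is immediate from the very definition of $L^2_X$ --- since $\phi_{-(\cdot)}\bigl(\phi_{(\cdot)}u(\cdot)\bigr) = u \in L^2(0,T;X_0)$ --- is a legitimate shortcut past the Carath\'eodory/Nemytskii measurability argument that the paper reuses from Theorem~\ref{thm:ips}, and the finiteness of the $L^2_X$-norm is indeed already covered by that theorem.
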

%
%
For the proof of the $L^2_X$ case, one makes an argument similar to that in the proof of Theorem \ref{thm:ips} and shows that given an arbitrary $u \in L^2(0,T;X_0)$, the map $t \mapsto \norm{\phi_tu(t)}{X(t)}^2$ is indeed measurable (then it follows that $\norm{\phi_{(\cdot)}u(\cdot)}{L^2_X}$ is finite). That the spaces are isomorphic follows from the above (which shows that there is a map from $L^2(0,T;X_0)$ to $L^2_X$) and the definition of $L^2_X$. The isomorphism is $T\colon L^2(0,T;X_0) \to L^2_X$ where 
\[Tu = \phi_{(\cdot)}u(\cdot)\qquad\text{and}\qquad T^{-1}v = \phi_{-(\cdot)}v(\cdot).\]
It is easy to check that $T$ is linear and bijective. The proof for the $L^2_{X^*}$ case uses the same readjustments as before.

The next lemma, which is a consequence of the uniform bounds on $\phi_t$ and $\phi^*_t$, will be in constant use throughout this work.
\begin{lemma}\label{lem:equivalenceOfNorms}The equivalence of norms
\begin{equation*}
\begin{aligned}[2]
\frac{1}{C_X}\norm{u}{L^2_X} &\leq \norm{\phi_{-(\cdot)}u(\cdot)}{L^2(0,T;X_0)} \leq C_X\norm{u}{L^2_X}\qquad &&\forall u \in L^2_X\\
\frac{1}{C_X}\norm{f}{L^2_{X^*}} &\leq \norm{\phi_{(\cdot)}^*f(\cdot)}{L^2(0,T;X^*_0)} \leq C_X\norm{f}{L^2_{X^*}}&&\forall f \in L^2_{X^*}
\end{aligned}
\end{equation*}
holds.
\end{lemma}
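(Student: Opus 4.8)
The plan is to derive the norm equivalence directly from the uniform operator bounds already established in Definition~\ref{comp} and the remark following it, together with the isomorphism identity $\norm{u}{L^2_X}^2 = \int_0^T \norm{u(t)}{X(t)}^2$ that underlies the inner products in Theorem~\ref{thm:ips}. The key observation is that the claimed inequalities are pointwise-in-$t$ statements integrated over $[0,T]$, so the whole argument reduces to the norm equivalence on the fibre spaces $X(t)$ that was recorded in the Remark after Definition~\ref{comp}.

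First I would fix $u \in L^2_X$ and write $\tilde u := \phi_{-(\cdot)}u(\cdot)$, which lies in $L^2(0,T;X_0)$ by definition of $L^2_X$. For almost every $t$, applying the fibrewise equivalence
\begin{equation*}
C_X^{-1}\norm{\phi_{-t}u(t)}{X_0} \leq \norm{\phi_t \phi_{-t} u(t)}{X(t)} = \norm{u(t)}{X(t)} \leq C_X \norm{\phi_{-t}u(t)}{X_0}
\end{equation*}
gives the inequality at each time $t$ (here I use $\phi_t\phi_{-t} = \mathrm{id}$). Squaring and integrating in $t$ over $[0,T]$, then taking square roots, converts these pointwise bounds into
\begin{equation*}
C_X^{-1}\norm{u}{L^2_X} \leq \norm{\tilde u}{L^2(0,T;X_0)} \leq C_X \norm{u}{L^2_X},
\end{equation*}
which is exactly the first claimed equivalence. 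The dual case is handled identically: for $f \in L^2_{X^*}$ one sets $\tilde f := \phi_{(\cdot)}^* f(\cdot) \in L^2(0,T;X_0^*)$, uses the fibrewise dual equivalence from the same Remark with $\phi_t^* \phi_{-t}^* = \mathrm{id}$, and integrates.

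I do not expect any genuine obstacle here, as this lemma is essentially an integrated restatement of the fibrewise norm equivalence. The only point requiring minor care is measurability: one must know that $t \mapsto \norm{u(t)}{X(t)}^2$ and $t \mapsto \norm{\phi_{-t}u(t)}{X_0}^2$ are both measurable so that the integrals are well-defined and the monotone pointwise inequalities survive integration. But the first measurability is precisely what was verified in the proof of Theorem~\ref{thm:ips} (via the Carath\'eodory/Nemytskii argument), and the second is immediate since $\tilde u \in L^2(0,T;X_0)$. Hence the integration step is fully justified, and the same reasoning transfers verbatim to the dual spaces using the measurability of $t \mapsto \norm{\phi_{-t}^* f}{X^*(t)}$ noted after Definition~\ref{comp}.
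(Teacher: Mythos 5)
Your proof is correct and is exactly the argument the paper has in mind: the lemma is stated there without proof, noted only as ``a consequence of the uniform bounds on $\phi_t$ and $\phi_t^*$,'' and your integration of the fibrewise equivalence $C_X^{-1}\norm{\phi_{-t}u(t)}{X_0} \leq \norm{u(t)}{X(t)} \leq C_X\norm{\phi_{-t}u(t)}{X_0}$ (and its dual analogue) is precisely that consequence spelled out. The measurability remarks you add are the right ones and are already covered by the proof of Theorem~\ref{thm:ips} and the definition of the spaces.
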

\begin{corollary}The spaces $L^2_X$ and $L^2_{X^*}$ are separable Hilbert spaces.
\end{corollary}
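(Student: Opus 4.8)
The plan is to leverage the isomorphism established in Lemma~\ref{lem:pullbackIsInL2X} together with the norm equivalence of Lemma~\ref{lem:equivalenceOfNorms}, reducing every property we want for $L^2_X$ and $L^2_{X^*}$ to the corresponding (already known) property of the standard Bochner spaces $L^2(0,T;X_0)$ and $L^2(0,T;X_0^*)$. The key observation is that the map $T\colon L^2(0,T;X_0) \to L^2_X$, $Tu = \phi_{(\cdot)}u(\cdot)$, is a linear bijection, and by Lemma~\ref{lem:equivalenceOfNorms} it is bounded with bounded inverse (both with constant $C_X$); that is, $T$ is a topological isomorphism between a Banach space we understand and the inner product space we wish to study. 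The analogous statement holds for $L^2_{X^*}$ via the map $f \mapsto \phi_{-(\cdot)}^*f(\cdot)$.

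First I would establish completeness. Since $L^2(0,T;X_0)$ is complete and $T$ is a topological isomorphism, completeness transfers directly: given a Cauchy sequence $(v_n)$ in $L^2_X$, the preimages $(T^{-1}v_n)$ form a Cauchy sequence in $L^2(0,T;X_0)$ by the left-hand inequality of Lemma~\ref{lem:equivalenceOfNorms}, hence converge to some $u$, and then $Tu$ is the limit of $(v_n)$ in $L^2_X$ by the right-hand inequality. Because we already know from Theorem~\ref{thm:ips} that $L^2_X$ is an inner product space, completeness upgrades it to a Hilbert space. The same argument, using the second line of Lemma~\ref{lem:equivalenceOfNorms}, handles $L^2_{X^*}$.

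Next I would establish separability. The space $L^2(0,T;X_0)$ is separable because $X_0$ is a separable Hilbert space (by the compatibility assumption in Definition~\ref{comp}) and $L^2(0,T;Y)$ is separable whenever $Y$ is a separable Banach space. Separability is preserved under continuous images, and $T$ is in particular continuous and surjective, so the image under $T$ of a countable dense subset of $L^2(0,T;X_0)$ is a countable dense subset of $L^2_X$. An identical argument applies to $L^2_{X^*}$, noting that $X_0^*$ is separable since $X_0$ is a separable Hilbert space (indeed $X_0 \cong X_0^*$ isometrically via Riesz representation).

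I do not expect any genuine obstacle here, since the substance has already been absorbed into the preceding lemmas: once the isomorphism and the two-sided norm bounds are in hand, both completeness and separability are formal consequences of the corresponding facts for standard Bochner spaces. The only point requiring a little care is making sure the norm equivalence is invoked in the correct direction at each step---using the lower bound to transport Cauchy-ness and the upper bound to transport convergence---but this is routine rather than difficult.
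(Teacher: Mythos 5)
Your proposal is correct and follows exactly the paper's own argument: the paper likewise deduces completeness of $L^2_X$ (and $L^2_{X^*}$) from the isomorphism with $L^2(0,T;X_0)$ together with the norm equivalence of Lemma \ref{lem:equivalenceOfNorms}, and obtains separability the same way. You simply spell out the routine transfer of Cauchy sequences and countable dense sets in more detail than the paper does.
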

\begin{proof}
Since $L^2_{X}$ and $L^2(0,T;X_0)$ are isomorphic and the latter space is complete, so too is $L^2_{X}$ by the equivalence of norms result in the previous lemma. The separability also follows from the previous lemma.
\end{proof}
We now investigate the relationship between the dual space of $L^2_X$ and the space $L^2_{X^*}.$ We in fact prove that these spaces can be identified; this requires the following preliminary lemmas.
\begin{lemma}\label{lem:dualPairingIsIntegrable}
For $f \in L^2_{X^*}$ and $u \in L^2_X$, the map
\[t \mapsto \langle f(t),u(t) \rangle_{\Xmt, \Xt}\]
is integrable on $[0,T].$
\end{lemma}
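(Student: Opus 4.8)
The plan is to reduce the statement about the evolving-space duality pairing to a corresponding statement about the standard pairing in the fixed reference space $X_0$, where integrability is immediate. The key observation is that the pushforward and pullback maps are adjoints of one another: for $f \in X^*(t)$ and $u \in X(t)$ we have the identity
\begin{equation*}
\langle f(t), u(t) \rangle_{\Xmt, \Xt} = \langle \phi_t^* f(t), \phi_{-t} u(t) \rangle_{\Xms, \Xs},
\end{equation*}
which follows directly from the definition of the dual (adjoint) operator $\phi_t^*$ acting as $\langle \phi_t^* g, x\rangle_{\Xms,\Xs} = \langle g, \phi_t x\rangle_{\Xmt,\Xt}$, applied with $g = f(t)$ and $x = \phi_{-t}u(t)$ and using $\phi_t\phi_{-t} = \mathrm{id}$. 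The first step is therefore to record this pointwise identity carefully.

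Next I would introduce the pulled-back functions $\tilde f := \phi_{(\cdot)}^* f(\cdot)$ and $\tilde u := \phi_{-(\cdot)} u(\cdot)$. By the very definition of the spaces $L^2_{X^*}$ and $L^2_X$, we have $\tilde f \in L^2(0,T;X_0^*)$ and $\tilde u \in L^2(0,T;X_0)$, so the map
\begin{equation*}
t \mapsto \langle \phi_t^* f(t), \phi_{-t} u(t) \rangle_{\Xms, \Xs} = \langle \tilde f(t), \tilde u(t) \rangle_{\Xms, \Xs}
\end{equation*}
is the duality pairing of two standard Bochner functions. It is a classical fact of Bochner-space theory that the pairing $t \mapsto \langle g(t), v(t)\rangle_{X_0^*, X_0}$ of $g \in L^2(0,T;X_0^*)$ with $v \in L^2(0,T;X_0)$ is integrable, with the bound $|\langle g(t), v(t)\rangle| \le \norm{g(t)}{X_0^*}\norm{v(t)}{X_0}$ giving an $L^1$ majorant via Cauchy--Schwarz in time. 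Combining this with the pointwise identity yields integrability of the original map on $[0,T]$.

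The only point requiring care, and the step I would treat as the main obstacle, is \emph{measurability} of $t \mapsto \langle \tilde f(t), \tilde u(t)\rangle_{\Xms,\Xs}$; integrability then follows from the $L^1$ majorant noted above. To settle measurability I would invoke Lemma \ref{cor:convergenceSimpleMeasurableFunctions} (or equivalently the density of simple functions used to prove it): choose simple measurable functions $u_n \in L^2(0,T;X_0)$ and $f_n \in L^2(0,T;X_0^*)$ approximating $u$ and $f$ pointwise almost everywhere after pushforward/pullback. For simple functions the pairing $t \mapsto \langle f_n(t), u_n(t)\rangle_{\Xms,\Xs}$ is a finite sum of products of scalar pairings with measurable (indeed simple) coefficients, hence measurable, and passing to the almost-everywhere pointwise limit preserves measurability. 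An alternative route is to note that both $\tilde f$ and $\tilde u$ are strongly measurable (as elements of Bochner spaces over the separable spaces $X_0^*$ and $X_0$), so they are almost-everywhere limits of simple functions, and the bilinear duality pairing $X_0^* \times X_0 \to \mathbb{R}$ is continuous; composing a continuous bilinear map with a pair of strongly measurable functions yields a measurable scalar function. Either way, once measurability is established, integrability is immediate from the Cauchy--Schwarz majorant, completing the proof.
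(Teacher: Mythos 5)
Your proof is correct and takes essentially the same route as the paper: both arguments reduce the evolving pairing to the fixed pairing of $\tilde f = \phi_{(\cdot)}^*f(\cdot) \in L^2(0,T;X_0^*)$ with $\tilde u = \phi_{-(\cdot)}u(\cdot) \in L^2(0,T;X_0)$ and then deduce integrability from the Cauchy--Schwarz majorant. The only (cosmetic) difference is the tool used for measurability of the composition --- the paper phrases it via a Carath\'eodory function and a Nemytskii-type result, while you use simple-function approximation, i.e.\ continuity of the bilinear pairing applied to strongly measurable arguments.
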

\begin{proof}
By considering the Carath\'eodory map $F\colon [0,T] \times X_0^* \times X_0 \to \mathbb{R}$ defined by $F(t,x^*,x) = \langle \phi_{-t}^*x^*, \phi_t x \rangle_{X^*(t), X(t)}$ and using Remark 3.4.2 of \cite{gasinski}, given $f \in L^2_{X^*}$ and $u \in L^2_X$, we have with $\tilde f := \phi_{(\cdot)}^*f(\cdot)$ and $\tilde u:= \phi_{-(\cdot)}u(\cdot)$ that $t \mapsto \langle \phi_{-t}^*\tilde f(t), \phi_t \tilde u(t) \rangle_{X^*(t), X(t)} = \langle f(t), u(t) \rangle_{X^*(t), X(t)}$ is measurable, since $t \mapsto \tilde f(t)$ and $t \mapsto \tilde u(t)$ are measurable. That the integral is finite is trivial.
\end{proof}
\begin{lemma}\label{lem:functionMeasurable}
Suppose that $f(t) \in \Xmt$ for almost every $t \in [0,T]$ with
\[\int_0^T \norm{f(t)}{\Xmt}^2 < \infty,\]
and that for every $u \in L^2_X$, the map $t \mapsto \langle f(t), u(t) \rangle_{\Xmt,\Xt}$ is measurable. Then $f \in L^2_{X^*}$.
\end{lemma}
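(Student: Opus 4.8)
Looking at Lemma~\ref{lem:functionMeasurable}, I need to show that a function $f$ satisfying the stated hypotheses belongs to $L^2_{X^*}$. By the definition of $L^2_{X^*}$ and Lemma~\ref{lem:pullbackIsInL2X} (or equivalently the isomorphism $T$ there), membership in $L^2_{X^*}$ is equivalent to showing that the pullback $\phi_{(\cdot)}^* f(\cdot)$ lies in $L^2(0,T;X_0^*)$. Since we already have the quantitative bound $\int_0^T \norm{f(t)}{\Xmt}^2 < \infty$ together with the uniform norm equivalence from the compatibility assumptions (giving $\norm{\phi_t^* f(t)}{X_0^*} \leq C_X \norm{f(t)}{\Xmt}$), the integrability of $\norm{\phi_{(\cdot)}^* f(\cdot)}{X_0^*}^2$ is immediate. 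Therefore \emph{the only genuine issue is measurability}: I must prove that $t \mapsto \phi_t^* f(t)$ is (strongly, equivalently weakly, since $X_0^*$ is separable) measurable as an $X_0^*$-valued map.

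The plan is to exploit Pettis' theorem, which the excerpt invokes after Definition~\ref{defn:weakMeasurability}: because $X_0$ is separable, $X_0^*$ is separable, and weak measurability of $t \mapsto \phi_t^* f(t)$ will imply strong measurability. So I would reduce to checking that for every fixed $x \in X_0$, the scalar map
\[
t \mapsto \langle \phi_t^* f(t), x \rangle_{X_0^*, X_0}
\]
is measurable on $[0,T]$. The key observation is that by definition of the dual (adjoint) map, $\langle \phi_t^* f(t), x \rangle_{X_0^*, X_0} = \langle f(t), \phi_t x \rangle_{\Xmt, \Xt}$. Now I want to recognize the right-hand side as the pairing of $f$ against an element of $L^2_X$: the constant pullback $\tilde u \equiv x$ produces, via the isomorphism of Lemma~\ref{lem:pullbackIsInL2X}, the element $u_x := \phi_{(\cdot)} x \in L^2_X$ (here I use that the constant function $t \mapsto x$ lies in $L^2(0,T;X_0)$ since $[0,T]$ is bounded). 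Then $\langle f(t), \phi_t x \rangle_{\Xmt,\Xt} = \langle f(t), u_x(t) \rangle_{\Xmt,\Xt}$, and this is precisely measurable by the hypothesis of the lemma applied to $u = u_x \in L^2_X$.

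Assembling these pieces: for each $x \in X_0$ the map $t \mapsto \langle \phi_t^* f(t), x \rangle_{X_0^*, X_0}$ equals the measurable map $t \mapsto \langle f(t), u_x(t) \rangle_{\Xmt,\Xt}$, hence $t \mapsto \phi_t^* f(t)$ is weakly measurable into the separable space $X_0^*$, hence strongly measurable by Pettis. Combined with the $L^2$ bound above, $\phi_{(\cdot)}^* f(\cdot) \in L^2(0,T; X_0^*)$, which by the characterisation of $L^2_{X^*}$ means $f \in L^2_{X^*}$, as required.

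The step I expect to be the main (though modest) obstacle is the translation of the hypothesis: the lemma gives measurability of $t \mapsto \langle f(t), u(t)\rangle$ only for genuine elements $u \in L^2_X$, so I must verify carefully that the constant pullback $x \in X_0$ does generate an admissible $u_x \in L^2_X$ and that the adjoint identity $\langle \phi_t^* f(t), x\rangle = \langle f(t), \phi_t x\rangle$ holds pointwise for a.e.\ $t$. Everything else is routine: the $L^2$ estimate is a direct consequence of the uniform bounds on $\phi_t^*$, and the passage from weak to strong measurability is exactly the separable-space Pettis argument already cited in the text.
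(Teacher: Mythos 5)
Your proof is correct and takes essentially the same route as the paper: both reduce to weak measurability of $\phi_{(\cdot)}^*f(\cdot)$ by testing against the pushed-forward constant elements $u_x := \phi_{(\cdot)}x \in L^2_X$ via the adjoint identity $\langle \phi_t^*f(t), x\rangle_{X_0^*,X_0} = \langle f(t), \phi_t x\rangle_{X^*(t),X(t)}$, then invoke Pettis' theorem and finish with the uniform bound on $\phi_t^*$.
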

\begin{proof}
{We have
$\langle f(t), u(t) \rangle_{\Xmt,\Xt} =\langle \phi_t^*f(t), \phi_{-t}u(t) \rangle_{\Xms,\Xs},$
and the left hand side is measurable,} hence the map
\begin{equation*}
t \mapsto \langle \phi_t^*f(t), \phi_{-t}u(t) \rangle_{\Xms,\Xs}
\end{equation*}
is measurable on $[0,T]$ for every $u \in L^2_X$.

Given $w \in X_0$, the element $u(\cdot) := \phi_{(\cdot)} w \in L^2_X,$ so we have
 (from Definition \ref{defn:weakMeasurability} or Footnote 80 in \cite[\S 1.4, p.~36]{roubicek2} for example) that $\phi_{(\cdot)}^*f(\cdot)\colon [0,T]\to\Xms$ is weakly measurable. Now, as remarked after Definition \ref{defn:weakMeasurability}, we use Pettis' theorem to conclude that $\phi_{(\cdot)}^*f(\cdot)$ is indeed strongly measurable. Hence we can compute
\[\norm{\phi_{(\cdot)}^*f(\cdot)}{L^2(0,T;\Xms)}^2 = \int_0^T \norm{\phi_t^*f(t)}{\Xms}^2 \leq C_X^2\int_0^T \norm{f(t)}{\Xmt}^2 < \infty,\]
so $\phi_{(\cdot)}^*f(\cdot) \in L^2(0,T;\Xms)$, giving $f \in L^2_{X^*}.$
\end{proof}
\begin{lemma}[Identification of $(L^2_X)^*$ and $L^2_{X^*}$]The spaces $(L^2_{X})^*$ and $L^2_{X^*}$ are isometrically isomorphic. Hence, we may identify $(L^2_X)^* \equiv L^2_{X^*}$, and the duality pairing of $f \in L^2_{X^*}$ with $u \in L^2_X$ is
\[\langle f, u \rangle_{L^2_{X^*}, L^2_X} = \int_0^T \langle f(t), u(t) \rangle_{\Xmt, \Xt}\;\mathrm{d}t.\]
\end{lemma}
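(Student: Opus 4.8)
The plan is to exhibit an explicit isometric isomorphism $\Phi\colon L^2_{X^*} \to (L^2_X)^*$ given by the candidate pairing itself, namely
\[
\langle \Phi(f), u\rangle_{(L^2_X)^*, L^2_X} = \int_0^T \langle f(t), u(t)\rangle_{\Xmt, \Xt}\,\mathrm{d}t,
\]
and then to read off the claimed identification and formula from it. First I would check that $\Phi$ is well-defined and bounded: by Lemma \ref{lem:dualPairingIsIntegrable} the integrand is integrable, and applying the pointwise bound $|\langle f(t), u(t)\rangle_{\Xmt,\Xt}| \le \norm{f(t)}{\Xmt}\norm{u(t)}{\Xt}$ followed by the Cauchy--Schwarz inequality in $t$ gives $|\langle \Phi(f), u\rangle| \le \norm{f}{L^2_{X^*}}\norm{u}{L^2_X}$, so that $\Phi(f) \in (L^2_X)^*$ with $\norm{\Phi(f)}{(L^2_X)^*} \le \norm{f}{L^2_{X^*}}$. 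Linearity of $\Phi$ is immediate.

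Next I would establish injectivity. Suppose $\Phi(f) = 0$. For fixed $w \in \Xs$ and a measurable set $E \subseteq [0,T]$, the function $u := \phi_{(\cdot)} w\,\mathbf 1_E(\cdot)$ lies in $L^2_X$ (since $\phi_{-(\cdot)}u(\cdot) = w\,\mathbf 1_E \in L^2(0,T;\Xs)$), and testing against it gives $\int_E \langle \phi_t^* f(t), w\rangle_{\Xms,\Xs}\,\mathrm{d}t = 0$. As $E$ is arbitrary, $\langle \phi_t^* f(t), w\rangle_{\Xms,\Xs} = 0$ for a.e.\ $t$; ranging over a countable dense set of $w \in \Xs$ and using that $\phi_t^*$ is injective yields $f(t) = 0$ for a.e.\ $t$, i.e.\ $f = 0$ in $L^2_{X^*}$.

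The crux is surjectivity together with the matching of norms. Given $F \in (L^2_X)^*$, I would use that $L^2_X$ is a Hilbert space (shown earlier) and apply the Riesz representation theorem to obtain $w \in L^2_X$ with $F(u) = \int_0^T (w(t), u(t))_{\Xt}\,\mathrm{d}t$ and $\norm{F}{(L^2_X)^*} = \norm{w}{L^2_X}$. Writing $R_t\colon \Xmt \to \Xt$ for the Riesz isomorphism of each $\Xt$, I would set $f(t) := R_t^{-1} w(t) \in \Xmt$, so that pointwise $(w(t), u(t))_{\Xt} = \langle f(t), u(t)\rangle_{\Xmt,\Xt}$ and $\norm{f(t)}{\Xmt} = \norm{w(t)}{\Xt}$. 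Granting that $f \in L^2_{X^*}$, this immediately gives $\Phi(f) = F$ together with $\norm{f}{L^2_{X^*}} = \norm{w}{L^2_X} = \norm{F}{(L^2_X)^*}$.

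The main obstacle is precisely the verification that this pointwise-defined $f$ is a legitimate element of $L^2_{X^*}$, since the Riesz maps $R_t$ vary with $t$ and need not interact with the $\phi_t$ in any isometric way. This is exactly the situation covered by Lemma \ref{lem:functionMeasurable}: the integrability $\int_0^T \norm{f(t)}{\Xmt}^2 = \norm{w}{L^2_X}^2 < \infty$ holds, and for each $u \in L^2_X$ the map $t \mapsto \langle f(t), u(t)\rangle_{\Xmt,\Xt} = (w(t),u(t))_{\Xt}$ is measurable, which I would obtain by polarisation from the fact that $t \mapsto \norm{v(t)}{\Xt}^2$ is measurable for every $v \in L^2_X$ (applied to $v = w \pm u \in L^2_X$), the latter being established exactly as in the proof of Theorem \ref{thm:ips}. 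Lemma \ref{lem:functionMeasurable} then yields $f \in L^2_{X^*}$. Finally, combining injectivity, surjectivity and the norm identity $\norm{\Phi(f)}{(L^2_X)^*} = \norm{f}{L^2_{X^*}}$ for arbitrary $f$ (apply the surjectivity construction to $F = \Phi(f)$ and invoke injectivity to see that the representative it produces equals $f$) shows that $\Phi$ is an isometric isomorphism, and the stated duality pairing formula is then nothing but the definition of $\Phi$.
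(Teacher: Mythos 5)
Your proposal is correct and follows essentially the same route as the paper: the explicit map given by the candidate pairing, surjectivity via the Riesz representation on $L^2_X$ composed with the fibrewise inverse Riesz maps $R_t^{-1}$, membership of the resulting function in $L^2_{X^*}$ via Lemma \ref{lem:functionMeasurable}, and the isometry from the pointwise isometry of the Riesz maps. Your two small variations --- obtaining the measurability of $t \mapsto (w(t),u(t))_{X(t)}$ by polarisation rather than by reading it off the integrability of the Riesz representation formula, and proving injectivity by testing against $\phi_{(\cdot)}w\,\mathbf 1_E$ rather than pulling back to the $L^2(0,T;X_0)$ duality --- are both sound and only cosmetic.
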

\begin{proof}
Define the linear map $\mathcal J\colon L^2_{X^*} \to (L^2_{X})^*$ by 
\[\langle \mathcal{J}f, \cdot\rangle_{(L^2_X)^*, L^2_X} = \int_0^T \langle f(t), (\cdot)(t) \rangle_{\Xmt, \Xt}\;\mathrm{d}t.\]
This is well-defined due to Lemma \ref{lem:dualPairingIsIntegrable}. We must check that $\mathcal J$ is an isometric isomorphism.

Suppose that $F \in (L^2_X)^*$. We first need to show that there exists a unique $f \in L^2_{X^*}$ such that
$\mathcal{J}f = F.$
To do this, we use the Riesz map $\mathcal R\colon (L^2_X)^* \to L^2_X$ to write
\begin{equation}\label{eq:proofOfIsometricIsomorphism}
\langle F,u\rangle_{(L^2_X)^*, L^2_X} = (\mathcal RF,u)_{L^2_X} = \int_0^T (\mathcal RF(t),u(t))_{X(t)},
\end{equation}
and then with $\mathcal S_t^{-1}\colon \Xt \to \Xmt$ denoting the inverse Riesz map on $\Xt$, we get
\[(\mathcal RF(t),u(t))_{\Xt} = \langle \mathcal S_t^{-1}(\mathcal RF(t)), u(t) \rangle_{\Xmt,\Xt}\]
for almost all $t \in [0,T]$. Now, from \eqref{eq:proofOfIsometricIsomorphism}, the right hand side of this equality must be integrable. Hence
\[t \mapsto \langle \mathcal S_t^{-1}(\mathcal RF(t)), u(t) \rangle_{\Xmt,\Xt}\]
is measurable for every $u \in L^2_X.$ Now, the question is whether $\mathcal S_{(\cdot)}^{-1}(\mathcal RF(\cdot)) \in L^2_{X^*}.$ Clearly $\mathcal S_t^{-1}(\mathcal RF(t)) \in \Xmt$, and by the isometry of the Riesz maps,
\begin{equation}\label{eq:dualSpaceIsometry1}
\int_0^T \norm{\mathcal S_t^{-1}(\mathcal RF(t))}{\Xmt}^2 = \int_0^T\norm{\mathcal RF(t)}{\Xt}^2 = \norm{\mathcal RF}{L^2_X}^2 = \norm{F}{(L^2_X)^*}^2
\end{equation}
which is finite. Therefore, we obtain $\mathcal S_{(\cdot)}^{-1}(\mathcal RF(\cdot)) \in L^2_{X^*}$ by Lemma \ref{lem:functionMeasurable}. So $\mathcal{J}(\mathcal S_{(\cdot)}^{-1}\mathcal RF(\cdot)) = F$.

For uniqueness, suppose that $\mathcal{J}f = 0$. Then
\begin{align*}
\langle \mathcal{J}f, u\rangle_{(L^2_X)^*, L^2_X} &= \int_0^T \langle f(t), u(t) \rangle_{\Xmt, \Xt}\\
&= \int_0^T \langle \phi_t^*f(t), \phi_{-t}u(t) \rangle_{\Xms, \Xs}\\
&= \langle \phi_{(\cdot)}^*f(\cdot), \hat u \rangle_{L^2(0,T;\Xms), L^2(0,T;\Xs)}\tag{with $\hat u = \phi_{-(\cdot)}u(\cdot)$},
\end{align*}
which holds for all $\hat u \in L^2(0,T;\Xs)$. This implies that $f = 0$.

To see that $\mathcal{J}$ is an isometry, we define $\mathcal{J}^{-1}\colon (L^2_X)^* \to L^2_{X^*}$ by $\mathcal{J}^{-1}F = \mathcal S_{(\cdot)}^{-1}\mathcal RF(\cdot)$ and use \eqref{eq:dualSpaceIsometry1} to conclude.
\end{proof}
Although we have no notion of continuity in time for a function $u \in L^2_X$, we can nevertheless make the following definition.
\begin{definition}[Spaces of pushed-forward  continuously differentiable functions]
Define
\begin{align*}
C^k_X &= \{\xi \in L^2_X \mid \phi_{-(\cdot)}\xi(\cdot) \in C^k([0,T];X_0)\}\quad\text{for $k \in \{0,1,...\}$}\\
\mathcal{D}_X(0,T) &= \{\eta \in L^2_X \mid \phi_{-(\cdot)}\eta(\cdot) \in \mathcal{D}((0,T);X_0)\}\\
\mathcal{D}_X[0,T] &= \{\eta \in L^2_X \mid \phi_{-(\cdot)}\eta(\cdot) \in \mathcal{D}([0,T];X_0)\}.
\end{align*}
\end{definition}
Since $\mathcal{D}((0,T);X_0) \subset \mathcal{D}([0,T];X_0)$, we have 
\[\mathcal{D}_X(0,T) \subset \mathcal{D}_X[0,T] \subset C^k_X.\]

\subsection{Evolving Hilbert space structure}
In the preceding, we set up a Hilbert space $L^2_X$ and its dual $L^2_{X^*}$ based on an arbitrary family of separable Hilbert spaces $\{X(t)\}_{t \in [0,T]}$ and a suitable family of maps $\{\phi_t\}_{t \in [0,T]}$. 
We now lay the groundwork for posing PDEs on evolving spaces. For each $t \in [0,T]$, let $\Vt$ and $\Ht$ be (real) separable Hilbert spaces with $V_0 := V(0)$ and $H_0 := H(0)$ such that $V_0 \subset H_0$ is a continuous and dense embedding. Identifying $H_0$ with its dual space $H_0^*$, it follows that $H_0 \subset V_0^*$ is also continuous and dense. In other words, $V_0 \subset H_0 \subset V_0^*$ is a Gelfand or evolution triple of Hilbert spaces (i.e., a Hilbert triple) \cite[\S 7.2]{roubicek}.
\begin{asss}\label{asss:compatibilityOfEvolvingHilbertTriple}We will assume compatibility in the sense of Definition \ref{comp}  for the family $\{H(t)\}_{t \in [0,T]}$ and a family of linear homeomorphisms $\{\phi_{t}\}_{t \in [0,T]}$; that is, we assume $(H,\phi_{(\cdot)})$ is a compatible pair. In addition, we also assume that $(V, \phi_{(\cdot)}|_{V_0})$ is compatible. We will simply write $\phi_t$ instead of $\phi_t|_{V_0}$, and we will denote the dual operator of $\phi_t\colon V_0 \to \Vt$ by $\phi_t^*\colon \Vmt \to \Vms$; we are not interested in the dual of $\phi_t\colon \Hs \to \Ht.$ 
\end{asss}
It then follows that for each $t \in [0,T]$, $\Vt \subset \Ht$ is continuously and densely embedded. 
Let us summarise the meaning and consequences of Assumptions \ref{asss:compatibilityOfEvolvingHilbertTriple} for the convenience of the reader.
\begin{enumerate}
\item For each $t \in [0,T]$, there exists a linear homeomorphism
\[\phi_{t}\colon H_0 \to \Ht\]
such that $\phi_0$ is the identity.
\item The restriction $\phi_t|_{V_0}$ (which we will denote by $\phi_t$) is also a linear homeomorphism from $V_0$ to $\Vt$.
\item There exist constants $C_H$ and $C_V$ independent of $t \in [0,T]$ such that
\begin{align*}
\norm{\phi_t u}{\Ht} &\leq C_H\norm{u}{\Hs} &&\forall u \in \Hs, \\
\norm{\phi_t u}{\Vt} &\leq C_V\norm{u}{\Vs} &&\forall u \in \Vs.
\end{align*}
\item We will only be interested in the dual operator of $\phi_t\colon \Vs \to \Vt$, denoted by $\phi_t^*\colon \Vmt \to \Vms$, which satisfies
\begin{align*}
\norm{\phi_t^* f}{\Vms} &\leq C_V\norm{f}{\Vmt} &\forall f \in \Vmt.
\end{align*}
\item The inverses of $\phi_t$ and $\phi_t^*$ will be denoted by $\phi_{-t}$ and $\phi_{-t}^*$ respectively, and these are uniformly bounded:
\begin{align*}
\norm{\phi_{-t} u}{\Hs} &\leq \tilde C_H\norm{u}{\Ht}&&\forall u \in \Ht, \\
\norm{\phi_{-t} u}{\Vs} &\leq \tilde C_V\norm{u}{\Vt} &&\forall u \in \Vt, \\
\norm{\phi_{-t}^* f}{\Vmt} &\leq \tilde C_V\norm{f}{\Vms} &&\forall f \in \Vms.
\end{align*}
\item The maps
\begin{align*}
&t \mapsto \norm{\phi_t u}{H(t)}\qquad &&\forall u \in H_0\\
&t \mapsto \norm{\phi_t u}{V(t)}&&\forall u \in V_0
\intertext{are continuous, and the map}
&t \mapsto \norm{\phi_{-t}^* f}{V^*(t)}&&\forall f \in V^*_0
\end{align*}
is measurable.
\end{enumerate}
Our work in \S \ref{sec:assumptionsEvolution} tells us that the spaces $L^2_H$, $L^2_V,$ and $L^2_{V^*}$ are Hilbert spaces with the inner product given by the formula \eqref{eq:innerProductOnL2X}. 
\begin{remark}These homeomorphisms $\phi_t$ are similar to Arbitrary Lagrangian Eulerian (ALE) maps that are ubiquitous in applications on moving domains. See \cite{ALE} for an account of the ALE framework and a comparable set-up. 
\end{remark}
By the density of $L^2(0,T;V_0)$ in $L^2(0,T;H_0)$, we obtain the next result.
\begin{lemma}{The embedding $L^2_V \subset L^2_H$ is continuous and dense.}
\end{lemma}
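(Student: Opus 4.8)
The plan is to reduce the statement $L^2_V \subset L^2_H$ to the corresponding fact about standard Bochner spaces, which we already know: $L^2(0,T;V_0) \subset L^2(0,T;H_0)$ is continuous and dense because $V_0 \subset H_0$ is continuous and dense by Assumptions \ref{asss:compatibilityOfEvolvingHilbertTriple}. The bridge is Lemma \ref{lem:pullbackIsInL2X}, which gives isomorphisms $T_V\colon L^2(0,T;V_0)\to L^2_V$ and $T_H\colon L^2(0,T;H_0)\to L^2_H$, both of the form $u\mapsto \phi_{(\cdot)}u(\cdot)$; the only subtlety is that $T_V$ uses $\phi_t|_{V_0}$ while $T_H$ uses $\phi_t$, but since $\phi_t|_{V_0}$ is the restriction of $\phi_t$, the two pushforward maps agree on $V_0$, so the diagram commutes.

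First I would establish the inclusion $L^2_V\subseteq L^2_H$ as sets. Take $u\in L^2_V$, so that $\phi_{-(\cdot)}u(\cdot)\in L^2(0,T;V_0)$ by definition. Because $V_0\subset H_0$ continuously, $\phi_{-(\cdot)}u(\cdot)\in L^2(0,T;H_0)$ as well, and applying $T_H$ (i.e. pushing forward by $\phi_t$, which agrees with $\phi_t|_{V_0}$ on the range $V_0$) recovers $u$; hence $u\in L^2_H$. For continuity of the embedding, I would chain the norm equivalences from Lemma \ref{lem:equivalenceOfNorms}: bounding $\norm{u}{L^2_H}$ by $C_H\norm{\phi_{-(\cdot)}u(\cdot)}{L^2(0,T;H_0)}$, then by a constant times $\norm{\phi_{-(\cdot)}u(\cdot)}{L^2(0,T;V_0)}$ via the continuous embedding $V_0\subset H_0$, and finally by $C_H C_V\norm{u}{L^2_V}$ using the other side of the equivalence. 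This produces a single constant independent of $t$.

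For density, I would argue that $T_H$ maps dense subsets to dense subsets, being an isomorphism. Given $u\in L^2_H$, its pullback $\hat u := \phi_{-(\cdot)}u(\cdot)\in L^2(0,T;H_0)$; by density of $L^2(0,T;V_0)$ in $L^2(0,T;H_0)$ choose $v_n\in L^2(0,T;V_0)$ with $v_n\to\hat u$ in $L^2(0,T;H_0)$. Pushing forward, $T_H v_n = \phi_{(\cdot)}v_n(\cdot)\in L^2_V$, and by the continuity of $T_H$ (again the norm equivalence of Lemma \ref{lem:equivalenceOfNorms}) we get $T_H v_n\to T_H\hat u = u$ in $L^2_H$. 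Thus any element of $L^2_H$ is approximated in the $L^2_H$-norm by elements of $L^2_V$, giving density.

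The main obstacle, though a minor one, is keeping careful track of which pushforward map is being applied and verifying that $T_V$ and $T_H$ are genuinely compatible, i.e. that the pushforward on $V_0$ is literally the restriction of the pushforward on $H_0$; this is exactly what Assumptions \ref{asss:compatibilityOfEvolvingHilbertTriple} guarantee by declaring $(V,\phi_{(\cdot)}|_{V_0})$ compatible with the \emph{same} family $\phi_t$. Once this compatibility is made explicit, everything else is a routine transfer of the continuous-dense embedding $V_0\subset H_0$ through the isomorphisms, and no new estimate beyond Lemmas \ref{lem:pullbackIsInL2X} and \ref{lem:equivalenceOfNorms} is needed.
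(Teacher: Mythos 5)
Your argument is correct and follows exactly the route the paper intends: the paper derives this lemma in one line from the density of $L^2(0,T;V_0)$ in $L^2(0,T;H_0)$, transferred through the pushforward isomorphisms, which is precisely what you carry out in detail. Your explicit checks of the norm chain via Lemma \ref{lem:equivalenceOfNorms} and of the compatibility of $\phi_t$ with $\phi_t|_{V_0}$ are the right details to fill in and introduce no gap.
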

Identifying $L^2_H$ with $L^2_{H^*}$ in the natural manner, we have that $L^2_V \subset L^2_H \subset L^2_{V^*}$ is a Hilbert triple. We make use of the formula
\[ \langle f, u \rangle_{L^2_{V^*}, L^2_V} = (f, u)_{L^2_H} \quad \text{whenever $f \in L^2_H$ and $u \in L^2_V$}.\]
\subsection{Abstract strong and weak material derivatives}\label{sec:abstractMaterialDerivative}
Suppose $\{\Gamma(t)\}_{t \in [0,T]}$ is a family of (sufficiently smooth) hypersurfaces evolving with velocity field $\mathbf w$, and that for each $t \in [0,T]$, $u(t)$ is a sufficiently smooth function defined on $\Gamma(t)$. Then the appropriate time derivative of $u$ \emph{takes into account the movement of the spatial points too}, and this time derivative is known as the (strong) \emph{material derivative}, which we can write informally as
\begin{equation}\label{eq:strongMaterialDerivative}
\dot u(t,x) = \frac{d}{dt}u(t,x(t)) = u_t(t,x) + \grad u(t,x)\cdot \mathbf w(t,x).
\end{equation}
This is well-studied: see \cite{gurtin} or \cite[\S 1.2]{fluids} for the flat case.
Our aim is to generalise this material derivative to arbitrary functions and arbitrary evolving spaces (and not just merely evolving surfaces).
\begin{definition}[Strong material derivative]For $\xi \in C^1_X$
define the \emph{strong material derivative} $\dot \xi \in C^0_X$ by
\begin{equation}\label{eq:defnStrongMaterialDerivative}
\dot{\xi}(t) := \phi_{t}\left(\frac{d}{dt}(\phi_{-t} \xi(t))\right).
\end{equation}
\end{definition} 
This definition is generalised from \cite{vierling}. 
So we see that the space $C^1_X$ is the space of functions with a strong material derivative, justifying the notation. In the evolving surface case, we show in \cite[\S 4]{AlpEllStiApplications} that this abstract formula agrees with \eqref{eq:strongMaterialDerivative}. The following remark observes that the pushforward of elements of $X_0$ into $X(t)$ have zero material derivative.
\begin{remark}\label{zeromatderiv}
Observe that given $\eta \in X_0$,
$$\dot{(\phi_{t}\eta)}=0$$
and that for $\xi \in C^1_X$
$$ \dot \xi= 0 \iff \exists ~ \eta \in X_0 ~~ \mbox{such that}
~~\xi(t)=\phi_{t}\eta.$$
\end{remark}
It may be the case that solutions to the PDE \eqref{eq:operatorEquationIntro}
\[\dot u(t) + A(t)u(t) = f(t)\]
may not exist if we ask for $u \in C^1_V$, that is, they may not possess strong material derivatives. We can relax this and ask for $\dot u$ to exist in a weaker sense, just like one does for the usual time derivative in parabolic problems on fixed domains. 
Heuristically, what should such a weak material derivative satisfy? Taking a clue from Lemma \ref{lem:spaceW0}, we expect 
\[\frac{d}{dt}(u(t),v(t))_{\Ht}=\langle \dot u(t), v(t)\rangle_{\Vmt, \Vt}+\langle \dot v(t), u(t) \rangle_{\Vmt,\Vt} + \text{extra term}\]
where we envisage an extra term because the Hilbert space associated with the inner product depends on $t$ itself, and certainly we should require the integration by parts formula 
\[\int_0^T \frac{d}{dt}(u(t),\eta(t))_{\Ht}=0\quad \forall \eta \in \mathcal D_V(0,T).\]
The identification of this extra term and a definition of the weak material derivative is what the rest of this section is devoted to. 
\begin{definition}[Relationship between the inner product on $\Ht$ and the space $\Hs$]\label{defn:bilinearFormb}
For all $t \in [0,T]$, define the bounded bilinear form $\hat{b}(t;\cdot,\cdot)\colon \Hs \times \Hs \to \mathbb{R}$ by
\[\hat{b}(t;u_0,v_0) = (\phi_t u_0, \phi_t v_0)_{\Ht} \quad \text{$\forall u_0$, $v_0 \in \Hs$}.\]
\end{definition}
This gives us a way of pulling back the inner product on $\Ht$ onto a bilinear form on $\Hs$ by the formula $(u,v)_{\Ht} = \hat{b}(t;\phi_{-t}u, \phi_{-t}v).$ It is also clear that
$\hat{b}(0;\cdot,\cdot) = (\cdot,\cdot)_{\Hs}$
by definition. In fact, one can see for each $t \in [0,T]$ that $\hat{b}(t;\cdot,\cdot)$ is an inner product on $\Hs$ (and it is norm-equivalent with the norm on $\Hs$); thanks to the Riesz representation theorem, there exists for each $t \in [0,T]$ a bounded linear operator $T_t\colon H_0 \to H_0$ such that
\begin{equation}
\hat b(t;u_0,v_0) = (T_tu_0, v_0)_{H_0} = (u_0, T_tv_0)_{H_0}\label{eq:assHatb}.
\end{equation}
\begin{remark}
It is not difficult to see that $T_t \equiv \phi_t^A\phi_t$, where $\phi_t^A\colon H(t) \to H_0$ denotes the Hilbert-adjoint of $\phi_t\colon H_0 \to \Ht$.
\end{remark}
\begin{asss}\label{asss:differentiabilityOfNorm}
We shall assume the following for all $u_0$, $v_0 \in H_0$:
\begin{align}
&\theta(t,u_0) := \frac{d}{dt}\norm{\phi_t u_0}{\Ht}^2 \text{ exists classically}\label{eq:assDifferentiabilityOfNorm}\\
&u_0 \mapsto \theta(t,u_0) \text{ is continuous}\label{eq:assContinuityOfDerivativeOfNorm}\\
&|\theta(t,u_0+v_0) - \theta(t,u_0-v_0)|\leq C\norm{u_0}{H_0}\norm{v_0}{H_0}\label{eq:assBoundednessOfDerivativeOfNorm}
\end{align}
where the constant $C$ is independent of $t \in [0,T]$.
\end{asss}
We are now able to define $\hat{\symbolForLittlec}(t;\cdot,\cdot)\colon \Hs\times \Hs \to \mathbb{R}$ by
\begin{align}
\hat{\symbolForLittlec}(t;u_0,v_0) &:= \frac{d}{dt}\hat{b}(t;u_0,v_0) = \frac{1}{4}\left(\theta(t,u_0+v_0) - \theta(t,u_0-v_0)\right).\label{eq:equationForHatC}
\end{align}
Denoting by $\hat{\symbolForBigC}(t)$ the operator
\begin{equation}\label{eq:assC}
\langle \hat{\symbolForBigC}(t) u_0, v_0 \rangle := \hat{\symbolForLittlec}(t;u_0, v_0),
\end{equation}
it follows by \eqref{eq:assBoundednessOfDerivativeOfNorm} that $\hat{\symbolForBigC}(t)\colon \Hs \to \Hms$.
\begin{definition}[The bilinear form $\symbolForLittlec(t;\cdot,\cdot)$]\label{defn:bilinearFormc}
For $u$, $v \in \Ht$, define the bilinear form $\symbolForLittlec(t;\cdot,\cdot)\colon \Ht \times \Ht \to \mathbb{R}$ by
\[\symbolForLittlec(t;u,v) = \hat{\symbolForLittlec}(t; \phi_{-t}u, \phi_{-t}v).\]
\end{definition}
\begin{lemma}\label{lem:bilinearFormc}
For all $u,$ $v \in L^2_H$, the map $t \mapsto \symbolForLittlec(t;u(t),v(t))$ 
is measurable and
$\symbolForLittlec(t;\cdot,\cdot)\colon\Ht \times \Ht \to \mathbb{R}$ is bounded independently of $t$:
\[|\symbolForLittlec(t;u,v)| \leq C\norm{u}{\Ht}\norm{v}{\Ht}.\]
\end{lemma}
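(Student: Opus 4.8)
The plan is to establish the two claims separately, reducing both to properties we already know about the pullback maps and about the auxiliary function $\theta$ from Assumption \ref{asss:differentiabilityOfNorm}. First I would unwind the definitions: by Definition \ref{defn:bilinearFormc} we have $\symbolForLittlec(t;u(t),v(t)) = \hat{\symbolForLittlec}(t;\phi_{-t}u(t),\phi_{-t}v(t))$, and by \eqref{eq:equationForHatC} this equals $\tfrac14\bigl(\theta(t,\phi_{-t}u(t)+\phi_{-t}v(t)) - \theta(t,\phi_{-t}u(t)-\phi_{-t}v(t))\bigr)$. So everything is expressed through $\theta$ evaluated at the pulled-back arguments $\phi_{-t}u(t)$ and $\phi_{-t}v(t)$.

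For the boundedness estimate I would simply apply \eqref{eq:assBoundednessOfDerivativeOfNorm} with $u_0 = \phi_{-t}u(t)$ and $v_0 = \phi_{-t}v(t)$, obtaining $|\symbolForLittlec(t;u(t),v(t))| \le \tfrac14 C \norm{\phi_{-t}u(t)}{\Hs}\norm{\phi_{-t}v(t)}{\Hs}$. Then I invoke the uniform bound $\norm{\phi_{-t}w}{\Hs} \le \tilde C_H \norm{w}{\Ht}$ from the consequences of Assumption \ref{asss:compatibilityOfEvolvingHilbertTriple} on both factors, which yields $|\symbolForLittlec(t;u,v)| \le C\norm{u}{\Ht}\norm{v}{\Ht}$ with a constant absorbing $\tfrac14 C \tilde C_H^2$, independent of $t$. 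This is routine and establishes the bounded-bilinear-form part for fixed $t$.

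For measurability, the main obstacle is that $\theta(t,\cdot)$ is not an abstract Nemytskii-type composition in an obvious way, because both the time argument and the space argument of $\theta$ move with $t$. The clean route is to set up a Carath\'eodory function on $[0,T]\times\Hs$: define $G(t,x_0) := \theta(t,x_0)$ and check the Carath\'eodory property using the hypotheses of Assumption \ref{asss:differentiabilityOfNorm}. Indeed $t \mapsto \theta(t,x_0)$ is measurable (it is the classical derivative in \eqref{eq:assDifferentiabilityOfNorm} of the continuous function $t\mapsto\norm{\phi_t x_0}{\Ht}^2$, hence a pointwise limit of measurable difference quotients), and $x_0 \mapsto \theta(t,x_0)$ is continuous by \eqref{eq:assContinuityOfDerivativeOfNorm}. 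Thus $G$ is Carath\'eodory, and combined with the fact that $t\mapsto \phi_{-t}u(t)$ and $t \mapsto \phi_{-t}v(t)$ are strongly measurable maps into $\Hs$ (this is exactly the content of $u,v\in L^2_H$, via the isomorphism in Lemma \ref{lem:pullbackIsInL2X}), the superposition $t \mapsto \theta(t,\phi_{-t}u(t)\pm\phi_{-t}v(t))$ is measurable.

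The heart of the measurability argument is therefore the standard fact that composing a Carath\'eodory function with a measurable function gives a measurable function; I would cite Remark 3.4.2 of \cite{gasinski} for this, exactly as was done in the proof of Lemma \ref{lem:dualPairingIsIntegrable}. Since $t\mapsto\phi_{-t}u(t)+\phi_{-t}v(t)$ and $t\mapsto\phi_{-t}u(t)-\phi_{-t}v(t)$ are measurable whenever $\phi_{-t}u(t)$ and $\phi_{-t}v(t)$ are, the expression $\tfrac14(\theta(t,\phi_{-t}u(t)+\phi_{-t}v(t))-\theta(t,\phi_{-t}u(t)-\phi_{-t}v(t)))$ is measurable as a finite combination of measurable functions, which is precisely $\symbolForLittlec(t;u(t),v(t))$. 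The only delicate point to verify carefully is the joint measurability needed to apply the Nemytskii/superposition result, namely that $\theta$ really is Carath\'eodory; I expect this verification (especially the measurability in $t$ of $\theta(t,x_0)$) to be the one genuinely nontrivial step, everything else being a direct application of earlier uniform bounds and the isomorphism with standard Bochner spaces.
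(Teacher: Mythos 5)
Your proposal is correct and follows essentially the same route as the paper: reduce $\symbolForLittlec$ to $\theta$ via \eqref{eq:equationForHatC}, get the uniform bound from \eqref{eq:assBoundednessOfDerivativeOfNorm} together with the uniform bounds on $\phi_{-t}$, and get measurability by showing $\theta$ is a Carath\'eodory function on $[0,T]\times H_0$ and citing Remark 3.4.2 of \cite{gasinski}. Your added justification that $t\mapsto\theta(t,x_0)$ is measurable as a pointwise limit of difference quotients is a detail the paper leaves implicit, but the argument is the same.
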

\begin{proof}
If $u, v \in L^2_{H}$, then by \eqref{eq:equationForHatC},
\begin{align*}
\symbolForLittlec(t;u(t),v(t)) &= \hat{\symbolForLittlec}(t;\phi_{-t}u(t), \phi_{-t}v(t))\\
&= \frac{1}{4}\left(\theta(t,\phi_{-t}u(t)+\phi_{-t}v(t)) - \theta(t,\phi_{-t}u(t)-\phi_{-t}v(t))\right),
\end{align*}
and it follows that $t \mapsto \symbolForLittlec(t;u(t),v(t))$ is measurable because $t \mapsto \theta(t,\phi_{-t}w(t))$ is measurable for $w \in L^2_H$. This in turn can be seen by noticing that $\theta\colon [0,T] \times H_0 \to \mathbb{R}$ is a Carath\'eodory function: the map $t \mapsto \theta(t,x)$ is measurable and by assumption \eqref{eq:assContinuityOfDerivativeOfNorm} the map $x \mapsto \theta(t,x)$ is continuous; thus by \cite[Remark 3.4.2]{gasinski} the desired measurability is achieved. The bound on $\symbolForLittlec(t;\cdot,\cdot)$ is a consequence of the assumption \eqref{eq:assBoundednessOfDerivativeOfNorm}.
\end{proof}
\begin{lemma}\label{ass:bHatDifferentiableSecond}
For $\sigma_1$, $\sigma_2 \in C^1([0,T];H_0)$, the map $t \mapsto \hat{b}(t;\sigma_1(t), \sigma_2(t))$ 
is differentiable in the classical sense and
\[\frac{d}{dt}\hat{b}(t;\sigma_1(t), \sigma_2(t)) = \hat{b}(t;\sigma_1'(t), \sigma_2(t)) + \hat{b}(t;\sigma_1(t), \sigma_2'(t)) + \hat{\symbolForLittlec}(t;\sigma_1(t),\sigma_2(t)).\] 
\end{lemma}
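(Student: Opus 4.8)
The plan is to prove this by a telescoping decomposition of the difference quotient that isolates the three effects being differentiated: the time dependence entering through the first argument of $\hat b$, through the second argument, and through the form's explicit dependence on $t$. Writing $\Delta^h_i := \sigma_i(t+h)-\sigma_i(t)$ and exploiting bilinearity of $\hat b(t+h;\cdot,\cdot)$, I would decompose
\[
\hat b(t+h;\sigma_1(t+h),\sigma_2(t+h)) - \hat b(t;\sigma_1(t),\sigma_2(t))
\]
into the sum of $\hat b\big(t+h;\Delta^h_1,\sigma_2(t+h)\big)$, the term $\hat b\big(t+h;\sigma_1(t),\Delta^h_2\big)$, and the purely temporal increment $\big[\hat b(t+h;\sigma_1(t),\sigma_2(t)) - \hat b(t;\sigma_1(t),\sigma_2(t))\big]$. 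Dividing by $h$ and letting $h\to0$ termwise will produce the three summands in the claimed formula.

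For the last term the arguments are frozen at $\sigma_1(t),\sigma_2(t)$, so its limit is exactly the classical $t$-derivative of $\hat b(\cdot\,;\sigma_1(t),\sigma_2(t))$, which by definition \eqref{eq:equationForHatC} equals $\hat{\symbolForLittlec}(t;\sigma_1(t),\sigma_2(t))$; this limit exists thanks to Assumption \eqref{eq:assDifferentiabilityOfNorm} and the polarization identity through which $\hat{\symbolForLittlec}$ is defined via $\theta$. For the first term, $\tfrac1h\Delta^h_1\to\sigma_1'(t)$ in $\Hs$ because $\sigma_1\in C^1([0,T];\Hs)$, while $\sigma_2(t+h)\to\sigma_2(t)$ in $\Hs$ and $t+h\to t$; I expect its limit to be $\hat b(t;\sigma_1'(t),\sigma_2(t))$, and by symmetry the second term tends to $\hat b(t;\sigma_1(t),\sigma_2'(t))$. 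Summing the three limits then yields both the existence of the classical derivative and the stated identity.

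The main obstacle is justifying the passage to the limit in the first two terms, where the \emph{time parameter and an argument move simultaneously}. To handle this cleanly I would first establish a joint-continuity statement: if $a_h\to a$ and $b_h\to b$ in $\Hs$ as $h\to0$, then $\hat b(t+h;a_h,b_h)\to\hat b(t;a,b)$. This I would prove by writing $\hat b(t+h;a_h,b_h)-\hat b(t;a,b)$ as $\hat b(t+h;a_h-a,b_h)+\hat b(t+h;a,b_h-b)$ plus $\hat b(t+h;a,b)-\hat b(t;a,b)$. The first two pieces vanish by the \emph{uniform} bound $|\hat b(t;u_0,v_0)|\le C_H^2\norm{u_0}{\Hs}\norm{v_0}{\Hs}$, which follows from Cauchy--Schwarz in $\Ht$ and compatibility of $(H,\phi_{(\cdot)})$, while the third piece is the continuity in $t$ of $\hat b(\cdot\,;a,b)$ for fixed $a,b$. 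That continuity follows by polarization, $\hat b(t;a,b)=\tfrac14(\norm{\phi_t(a+b)}{\Ht}^2-\norm{\phi_t(a-b)}{\Ht}^2)$, together with the continuity of $t\mapsto\norm{\phi_t w}{\Ht}$ from Definition \ref{comp} (indeed Assumption \eqref{eq:assDifferentiabilityOfNorm} provides even differentiability). Applying this joint-continuity fact with $a_h=\tfrac1h\Delta^h_1$, $b_h=\sigma_2(t+h)$, and symmetrically for the second term, delivers the two remaining limits and completes the argument.
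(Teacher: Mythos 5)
Your argument is correct and is exactly the route the paper intends: the paper's entire proof is the remark that the lemma ``follows simply by using the definition of the derivative as a limit,'' and your telescoping of the difference quotient, together with the joint-continuity lemma based on the uniform bound $|\hat b(s;u_0,v_0)|\le C_H^2\norm{u_0}{\Hs}\norm{v_0}{\Hs}$ and polarization, is precisely the detailed execution of that one-line hint. No gaps; you have simply supplied the details the paper omits.
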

This follows simply by using the definition of the derivative as a limit.
\begin{definition}[Weak material derivative]
For $u \in L^2_V$, if there exists a function $g \in L^2_{V^*}$ such that
\[\int_0^T \langle g(t), \eta(t)\rangle_{\Vmt, \Vt} =-\int_0^T (u(t), \dot{\eta}(t))_{\Ht} - \int_0^T \symbolForLittlec(t;u(t), \eta(t))\]
holds for all $\eta \in \mathcal{D}_V(0,T)$, then we say that $g$ is the \emph{weak material derivative} of $u$, and we write $$\dot u=g ~~\mbox{or} ~~\md u=g.$$
\end{definition}
This concept of a weak material derivative is indeed well-defined: if it exists, it is unique, and every strong material derivative is also a weak material derivative. It is easy to prove these facts: for uniqueness, assume there exist two material derivatives for the same function and then linearity and the density of $\mathcal{D}((0,T);V_0)$ (the space of test functions) in $L^2(0,T;V_0)$ gives the result. To show that a strong material derivative is also a weak material derivative, one can use Lemma \ref{ass:bHatDifferentiableSecond} and the relations between $\hat b(t;\cdot,\cdot)$, $\hat{\symbolForLittlec}(t;\cdot,\cdot)$, and  $\symbolForLittlec(t;\cdot,\cdot)$.
\subsection{Solution space}
We can now consider the spaces that solutions of our PDEs will lie in.
\begin{definition}[The space $W(V,V^*)$]\label{defn:solutionSpace}
Define the solution space
\begin{align*}
W(V,V^*) &= \{ u \in L^2_V \mid \dot u \in L^2_{V^*}\}
\end{align*}
and endow it with the inner product
\[(u, v)_{W(V,V^*)} = \int_0^T (u(t), v(t))_{\Vt} + \int_0^T(\dot u(t), \dot v(t) )_{\Vmt}.\]
\end{definition}

In order to prove existence theorems, we need some properties of the space $W(V,V^*)$ which turns out to be deeply linked with the following standard Sobolev--Bochner space.
\begin{definition}[The space $\mathcal W(V_0, V_0^*)$]
Define
\[\mathcal W(V_0, V_0^*) = \{v \in L^2(0,T;\Vs) \mid v' \in L^2(0,T; \Vms)\}\]
to be the space $\mathcal W(\mathcal V, \mathcal V^*)$ introduced in \S \ref{sec:1.1} with $\mathcal V = V_0$ and $\mathcal H = H_0$.
\end{definition}
It is convenient to introduce the following notion of {\it evolving space equivalence}.
\begin{defnass}\label{ass:spaceW1}
We assume that there is an \emph{evolving space equivalence} between $W(V,V^*)$ and $\mathcal W(V_0, V_0^*)$. By this we mean that
$$v \in W(V,V^*) ~~\mbox{ if and only if}~~~  \phi_{-(\cdot)} v(\cdot) \in \mathcal W(V_0, V_0^*),$$ and the equivalence of norms
\[C_1\norm{\phi_{-(\cdot)}v(\cdot)}{\mathcal W(V_0, V_0^*)} \leq \norm{v}{W(V,V^*)} \leq C_2\norm{\phi_{-(\cdot)}v(\cdot)}{\mathcal W(V_0, V_0^*)}\]
holds.
\end{defnass}
\begin{corollary}The space $W(V,V^*)$ is a Hilbert space.
\end{corollary}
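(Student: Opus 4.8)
The plan is to show that $W(V,V^*)$ is an inner product space and then deduce completeness by transferring it from the standard Sobolev--Bochner space $\mathcal W(V_0,V_0^*)$ through the evolving space equivalence of Assumption and Definition \ref{ass:spaceW1}.

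First I would check that the form in Definition \ref{defn:solutionSpace} is genuinely an inner product. It is the sum $(u,v)_{L^2_V} + (\dot u,\dot v)_{L^2_{V^*}}$, where the weak material derivatives $\dot u,\dot v \in L^2_{V^*}$ exist and are unique (as noted after the definition of the weak material derivative) and the assignment $u \mapsto \dot u$ is linear; hence the form is bilinear and symmetric because the $L^2_V$ and $L^2_{V^*}$ inner products are. Positive definiteness follows from that of the $L^2_V$ inner product alone: if $(u,u)_{W(V,V^*)} = \norm{u}{L^2_V}^2 + \norm{\dot u}{L^2_{V^*}}^2 = 0$, then in particular $\norm{u}{L^2_V} = 0$, so $u = 0$ in $L^2_V$.

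For completeness I would introduce the linear map $\Phi\colon W(V,V^*) \to \mathcal W(V_0,V_0^*)$ defined by $\Phi v = \phi_{-(\cdot)}v(\cdot)$. By Assumption and Definition \ref{ass:spaceW1} this is well-defined; it is bijective, with inverse $w \mapsto \phi_{(\cdot)}w(\cdot)$ (surjectivity and injectivity follow from the evolving space equivalence together with the isomorphism $L^2(0,T;V_0) \cong L^2_V$ of Lemma \ref{lem:pullbackIsInL2X}), and the stated norm equivalence $C_1\norm{\Phi v}{\mathcal W(V_0,V_0^*)} \leq \norm{v}{W(V,V^*)} \leq C_2\norm{\Phi v}{\mathcal W(V_0,V_0^*)}$ makes both $\Phi$ and $\Phi^{-1}$ bounded. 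Now given a Cauchy sequence $(u_n)$ in $W(V,V^*)$, the lower bound yields $\norm{\Phi u_n - \Phi u_m}{\mathcal W(V_0,V_0^*)} \leq C_1^{-1}\norm{u_n - u_m}{W(V,V^*)}$, so $(\Phi u_n)$ is Cauchy in $\mathcal W(V_0,V_0^*)$. Since that space is complete by Lemma \ref{lem:spaceW0}, we get $\Phi u_n \to w$ for some $w$; setting $u := \phi_{(\cdot)}w(\cdot) = \Phi^{-1}w \in W(V,V^*)$ and using the upper bound gives $\norm{u_n - u}{W(V,V^*)} \leq C_2\norm{\Phi u_n - w}{\mathcal W(V_0,V_0^*)} \to 0$. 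Thus $W(V,V^*)$ is complete, hence a Hilbert space.

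I do not expect a serious obstacle: the argument is a routine transfer of completeness along a bi-Lipschitz linear bijection, and every ingredient — the inner product structure of $L^2_V$ and $L^2_{V^*}$, the completeness of $\mathcal W(V_0,V_0^*)$, and the evolving space equivalence — is already available. The only point deserving a moment's care is confirming that the weak material derivative is well-defined and unique so that the inner product in Definition \ref{defn:solutionSpace} makes sense, but this was settled when the weak material derivative was introduced.
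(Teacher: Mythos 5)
Your proposal is correct and is exactly the argument the paper intends: the corollary is stated immediately after Assumption and Definition \ref{ass:spaceW1} precisely because completeness transfers from $\mathcal W(V_0,V_0^*)$ (complete by Lemma \ref{lem:spaceW0}) along the linear bijection $v \mapsto \phi_{-(\cdot)}v(\cdot)$ via the assumed norm equivalence, mirroring the earlier proof that $L^2_X$ is complete. The preliminary check that the form in Definition \ref{defn:solutionSpace} is a genuine inner product (using uniqueness and linearity of the weak material derivative) is a sensible inclusion, though the paper leaves it implicit.
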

We now show that Assumption \ref{ass:spaceW1} holds under certain conditions. See also the remark following the proof of the theorem.
\begin{theorem}\label{thm:spaceW1}Suppose that
\begin{equation}\label{eq:assIso}
u \in \mathcal W(V_0,V_0^*) \quad\text{if and only if}\quad T_{(\cdot)}u(\cdot) \in \mathcal W(V_0,V_0^*)\tag{T1}
\end{equation}
and that there exist operators 
\begin{align*}
\hat{S}(t)&\colon \Vms \to \Vms\qquad \text{and} \qquad \hat D(t)\colon \Vs \to \Vms
\end{align*}
such that for $u \in \mathcal{W}(V_0,V_0^*),$ 
\begin{equation}\label{eq:assDiffT}
(T_t u(t))' = \hat{S}(t)u'(t) + \hat{\symbolForBigC}(t) u(t)+\hat D(t)u(t) \tag{T2}
\end{equation}
and
\begin{align*}
\hat S(\cdot)u'(\cdot) &\in L^2(0,T;V_0^*)\qquad\text{and}\qquad\hat D(\cdot)u(\cdot) \in L^2(0,T;V_0^*).
\end{align*}
Suppose also that $\hat{S}(t)$ and $\hat{D}(t)$
are bounded independently of $t \in [0,T]$, and that $\hat{S}(t)$ has an inverse $\hat{S}(t)^{-1}\colon \Vms \to \Vms$ which also is bounded independently of $t \in [0,T]$. Then  $W(V,V^*)$ is equivalent to $\mathcal W(V_0, V_0^*)$ in the sense of Definition \ref{ass:spaceW1}.
\end{theorem}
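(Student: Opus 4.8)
The plan is to transfer everything to the standard Sobolev--Bochner space $\mathcal W(V_0,V_0^*)$ via the pullback $\bar v := \phi_{-(\cdot)}v(\cdot)$, using the auxiliary curve $T_{(\cdot)}\bar v(\cdot)$ as the bridge between the abstract weak material derivative of $v$ and the ordinary weak time derivative. Since Lemma~\ref{lem:equivalenceOfNorms} already gives $v \in L^2_V \iff \bar v \in L^2(0,T;V_0)$ with equivalent norms, all the content sits in the derivative. First I would record, for a test function $\eta\in\mathcal D_V(0,T)$ with $\bar\eta:=\phi_{-(\cdot)}\eta(\cdot)\in\mathcal D((0,T);V_0)$, the elementary facts $v(t)=\phi_t\bar v(t)$ and $\dot\eta(t)=\phi_t\bar\eta'(t)$ (from \eqref{eq:defnStrongMaterialDerivative}), together with the identities $(v(t),\dot\eta(t))_{\Ht}=\hat b(t;\bar v(t),\bar\eta'(t))=(T_t\bar v(t),\bar\eta'(t))_{\Hs}=\langle T_t\bar v(t),\bar\eta'(t)\rangle_{\Vms,\Vs}$ (Definition~\ref{defn:bilinearFormb} and \eqref{eq:assHatb}) and $\symbolForLittlec(t;v(t),\eta(t))=\langle\hat{\symbolForBigC}(t)\bar v(t),\bar\eta(t)\rangle$ (Definition~\ref{defn:bilinearFormc} and \eqref{eq:assC}).

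The crux is a bridging lemma that uses only the definitions: \emph{for $v\in L^2_V$ the weak material derivative $\dot v\in L^2_{V^*}$ exists if and only if the $L^2(0,T;\Hs)$-function $T_{(\cdot)}\bar v(\cdot)$ possesses a weak time derivative in $L^2(0,T;\Vms)$, and then}
\[
(T_{(\cdot)}\bar v(\cdot))' = \phi_{(\cdot)}^*\dot v(\cdot) + \hat{\symbolForBigC}(\cdot)\bar v(\cdot).
\]
To prove it I substitute the three identities above into the defining equation of the weak material derivative; as $\eta$ ranges over $\mathcal D_V(0,T)$ its pullback $\bar\eta$ ranges over all of $\mathcal D((0,T);V_0)$, so the defining equation becomes exactly $\int_0^T (T_t\bar v(t),\bar\eta'(t))_{\Hs} = -\int_0^T\langle\phi_t^*\dot v(t)+\hat{\symbolForBigC}(t)\bar v(t),\bar\eta(t)\rangle$. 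By the alternative characterisation of the weak derivative this is precisely the assertion that $T_{(\cdot)}\bar v(\cdot)$ has the stated weak derivative, whose right-hand member lies in $L^2(0,T;\Vms)$ because $\phi_{(\cdot)}^*\dot v(\cdot)\in L^2(0,T;\Vms)$ (Lemma~\ref{lem:pullbackIsInL2X}) and $\hat{\symbolForBigC}(\cdot)\bar v(\cdot)\in L^2(0,T;\Hms)\subset L^2(0,T;\Vms)$ by the bound \eqref{eq:assBoundednessOfDerivativeOfNorm}. For the converse one sets $g:=\phi_{-(\cdot)}^*\big((T_{(\cdot)}\bar v(\cdot))'-\hat{\symbolForBigC}(\cdot)\bar v(\cdot)\big)\in L^2_{V^*}$ and reads the same computation backwards to verify $\dot v=g$.

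With the bridge in hand the equivalence of spaces follows from \eqref{eq:assIso}. If $\bar v\in\mathcal W(V_0,V_0^*)$, then \eqref{eq:assIso} gives $T_{(\cdot)}\bar v(\cdot)\in\mathcal W(V_0,V_0^*)$, so its weak derivative exists and the bridging lemma produces $\dot v\in L^2_{V^*}$, i.e. $v\in W(V,V^*)$. Conversely, if $v\in W(V,V^*)$ the bridging lemma furnishes a weak derivative of $T_{(\cdot)}\bar v(\cdot)$ in $L^2(0,T;\Vms)$; since $T_t$ restricts to a bounded operator $V_0\to V_0$ (implicit already in the very statement of \eqref{eq:assIso}, and giving $T_{(\cdot)}\bar v(\cdot)\in L^2(0,T;V_0)$), we obtain $T_{(\cdot)}\bar v(\cdot)\in\mathcal W(V_0,V_0^*)$, and then \eqref{eq:assIso} yields $\bar v\in\mathcal W(V_0,V_0^*)$. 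For the norm equivalence I would, on the common domain, equate the bridging identity with \eqref{eq:assDiffT} and cancel $\hat{\symbolForBigC}(t)\bar v(t)$ to get the pointwise relation $\phi_t^*\dot v(t) = \hat S(t)\bar v'(t) + \hat D(t)\bar v(t)$, hence $\bar v'(t)=\hat S(t)^{-1}\big(\phi_t^*\dot v(t)-\hat D(t)\bar v(t)\big)$. The uniform bounds on $\hat S(t)$, $\hat S(t)^{-1}$, $\hat D(t)$ and $\phi_t^*$ bound each of $\norm{\bar v'(t)}{\Vms}$ and $\norm{\dot v(t)}{\Vmt}$ by a constant times the other plus $\norm{\bar v(t)}{V_0}$; integrating and invoking Lemma~\ref{lem:equivalenceOfNorms} delivers $C_1\norm{\phi_{-(\cdot)}v(\cdot)}{\mathcal W(V_0,V_0^*)}\le\norm{v}{W(V,V^*)}\le C_2\norm{\phi_{-(\cdot)}v(\cdot)}{\mathcal W(V_0,V_0^*)}$.

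The main obstacle is the bridging lemma: one must correctly pull back the vector-valued test functions and recognise that the ``extra term'' $\hat{\symbolForBigC}(\cdot)\bar v(\cdot)$ anticipated in the heuristic preceding the definition of the weak material derivative is exactly the discrepancy between $(T_{(\cdot)}\bar v(\cdot))'$ and $\phi_{(\cdot)}^*\dot v(\cdot)$, while carrying out the measurability and integrability bookkeeping that places each object in the correct Bochner space. A secondary delicate point, needed only for the forward implication, is that $T_t$ preserves $V_0$ so that $T_{(\cdot)}\bar v(\cdot)\in L^2(0,T;V_0)$; without it one still obtains the weak derivative of $T_{(\cdot)}\bar v(\cdot)$ but cannot declare membership in $\mathcal W(V_0,V_0^*)$ before applying \eqref{eq:assIso}.
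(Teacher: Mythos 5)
Your proposal is correct and follows essentially the same route as the paper: pull back test functions via $\phi_{-t}$, convert the $H(t)$ pairings into $H_0$ pairings through $T_t$ and $\hat b(t;\cdot,\cdot)$, invoke \eqref{eq:assIso} and \eqref{eq:assDiffT}, and extract the two pointwise identities $\phi_t^*\dot v(t)=\hat S(t)\bar v'(t)+\hat D(t)\bar v(t)$ and its inverse for the norm equivalence. Your only departure is organisational — packaging the computation as a single ``bridging lemma'' $(T_{(\cdot)}\bar v(\cdot))'=\phi_{(\cdot)}^*\dot v(\cdot)+\hat{\symbolForBigC}(\cdot)\bar v(\cdot)$ rather than running the argument separately in each direction, and explicitly flagging the need for $T_{(\cdot)}\bar v(\cdot)\in L^2(0,T;V_0)$ before applying \eqref{eq:assIso}, a point the paper passes over silently.
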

\begin{proof}
First, suppose $u \in \mathcal W(V_0,V_0^*)$. Clearly $\phi_{(\cdot)}u(\cdot) \in L^2_V$ and we need only to show that $\md (\phi_{(\cdot)}u(\cdot)) \in L^2_{V^*}$ exists. Let $\eta \in \mathcal{D}_V(0,T)$ and consider 
\begin{align}
\nonumber \int_0^T (\phi_t u(t), \dot \eta (t))_{\Ht} 
\nonumber &= \int_0^T (T_tu(t), (\phi_{-t}\eta(t))')_{\Hs}\tag{rewriting the integrand using $\hat b(t;\cdot,\cdot)$ and \eqref{eq:assHatb}}\\
\nonumber &= -\int_0^T \langle \hat{S}(t)u'(t) + \hat{\symbolForBigC}(t) u(t) + \hat D(t)u(t), \phi_{-t}\eta(t)\rangle_{\Vms,\Vs}\tag{by \eqref{eq:assIso} and \eqref{eq:assDiffT}}\\
\nonumber &= -\int_0^T \langle \phi_{-t}^*(\hat{S}(t)u'(t)+\hat D(t)u(t)), \eta(t)\rangle_{\Vmt,\Vt}\\
&\quad -\int_0^T \symbolForLittlec(t;\phi_tu(t),\eta(t)).\label{eq:proofIsomorphism1}
\end{align}
This shows that $\md(\phi_{(\cdot)}u(\cdot))$ exists.

Conversely, let $u \in W(V,V^*).$ We need to show the existence of $(\phi_{-(\cdot)}u(\cdot))'$ in $L^2(0,T;V_0^*).$ We start with the weak material derivative condition:
\[\int_0^T  \langle \dot u(t), \eta (t) \rangle_{\Vmt, \Vt} = -\int_0^T (u(t), \dot \eta (t))_{\Ht} - \int_0^T \symbolForLittlec(t;u(t),\eta(t))\]
for test functions $\eta \in \mathcal D_V(0,T)$. Pulling back leads to
\begin{align*}
\int_0^T  \langle \phi_t^*\dot u(t), \phi_{-t}\eta (t) \rangle_{\Vms, \Vs} &= -\int_0^T \hat b(t;\phi_{-t}u(t), (\phi_{-t}\eta (t))')\\
&\quad + \int_0^T\hat{\symbolForLittlec}(t;\phi_{-t}u(t),\phi_{-t}\eta(t)).
\end{align*}
Using \eqref{eq:assHatb} and \eqref{eq:assC} and rearranging:
\begin{align}
\int_0^T (T_t\phi_{-t}u(t), (\phi_{-t}\eta (t))')_{H_0} &=  -\int_0^T  \langle \phi_t^*\dot u(t)+\hat{\symbolForBigC}(t)\phi_{-t}u(t), \phi_{-t}\eta (t) \rangle_{\Vms, \Vs}\label{eq:proofIsomorphism2}.
\end{align}
It follows that $T_{(\cdot)}\phi_{-(\cdot)}u(\cdot)$ has a weak derivative, and hence by \eqref{eq:assIso} as does $\phi_{-(\cdot)}u(\cdot)$. This proves the bijection between $\mathcal{W}(V_0,V_0^*)$ and $W(V,V^*)$.

For the equivalence of norms,
let $u \in W(V,V^*).$ From \eqref{eq:proofIsomorphism1}, we see that
\[\dot u(t) = \phi_{-t}^*(\hat{S}(t)(\phi_{-t}u(t))' + \hat D(t)\phi_{-t}u(t))\]
which we can bound thanks to the boundedness of $\hat{S}(t)$ and $\hat D(t)$:
\[\norm{\dot u(t)}{\Vt} \leq C\left(\norm{(\phi_{-t}u(t))'}{\Vms} + \norm{\phi_{-t}u(t)}{\Vs}\right).\]
So we have achieved $\norm{u}{W(V,V^*)} \leq C_2\norm{\phi_{-(\cdot)}u(\cdot)}{\mathcal W(V_0, V_0^*)}.$ {For the reverse inequality, we use \eqref{eq:assDiffT} and \eqref{eq:proofIsomorphism2} to find}
\[ (\phi_{-t}u(t))'= \hat{S}(t)^{-1}(\phi_t^*\dot u(t)-\hat D(t)\phi_{-t}u(t)).\]
From this we obtain a bound of the form
\[\norm{(\phi_{-t}u(t))'}{\Vms} \leq C\left(\norm{\dot u(t)}{\Vmt} + \norm{u(t)}{\Vt}\right)\]
which implies the result.
\end{proof}
\begin{remark}If we knew that $T_tv_0 \in V_0$ for every $v_0 \in V_0$, then the assumption \eqref{eq:assDiffT} would follow from \eqref{eq:assIso} with $\langle \hat S(t)f, v \rangle_{\Vms, \Vs} := \langle f, T_tv \rangle_{\Vms, \Vs}$ and $\hat D(t) \equiv 0$.
\end{remark}
We are able to specify initial conditions of solutions to PDEs via the following lemma, which is an easy consequence of the continuity of the embedding $\mathcal W(V_0,V_0^*) \subset C^0([0,T];H_0)$.
\begin{lemma}\label{lem:initialConditionsWellDefined}The embedding $W(V,V^*) \subset C^0_H$ holds, hence for any $u \in W(V,V^*)$ the evaluation $t \mapsto u(t)$ is well-defined for every $t \in [0,T]$. Furthermore, we have the inequality 
\[\max_{t \in [0,T]}\norm{u(t)}{H(t)} \leq C\norm{u}{W(V,V^*)}\qquad\text{$\forall u \in W(V,V^*)$}.\]
\end{lemma}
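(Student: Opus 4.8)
The plan is to derive the embedding $W(V,V^*) \subset C^0_H$ directly from the evolving space equivalence of Definition/Assumption~\ref{ass:spaceW1} together with part~1 of Lemma~\ref{lem:spaceW0}, which supplies the fixed-space embedding $\mathcal{W}(V_0, V_0^*) \subset C([0,T]; H_0)$. The key observation is that the pullback $\phi_{-(\cdot)}$ transports everything back to the reference space $V_0 \subset H_0 \subset V_0^*$, where the standard theory applies.

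First I would take $u \in W(V,V^*)$ and set $\hat u := \phi_{-(\cdot)} u(\cdot)$. By Definition/Assumption~\ref{ass:spaceW1}, $\hat u \in \mathcal{W}(V_0, V_0^*)$, and by Lemma~\ref{lem:spaceW0}(1) we have $\hat u \in C([0,T]; H_0)$ with the continuity estimate $\norm{\hat u}{C([0,T];H_0)} \leq C \norm{\hat u}{\mathcal{W}(V_0,V_0^*)}$. In particular $\hat u(t) \in H_0$ is well-defined for \emph{every} $t \in [0,T]$ (not just almost every $t$), so pushing forward, $u(t) = \phi_t \hat u(t) \in H(t)$ is a genuine pointwise-in-time evaluation for each $t$. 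This is precisely what it means for $u$ to lie in $C^0_H$: we need $\phi_{-(\cdot)} u(\cdot) = \hat u \in C^0([0,T]; H_0)$, which holds by construction.

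For the quantitative bound I would chain the estimates. Using the compatibility bound $\norm{\phi_t \hat u(t)}{H(t)} \leq C_H \norm{\hat u(t)}{H_0}$ from Assumptions~\ref{asss:compatibilityOfEvolvingHilbertTriple}, take the maximum over $t$ to get
\[
\max_{t \in [0,T]} \norm{u(t)}{H(t)} \leq C_H \max_{t \in [0,T]} \norm{\hat u(t)}{H_0} = C_H \norm{\hat u}{C([0,T];H_0)}.
\]
Then apply the Lemma~\ref{lem:spaceW0}(1) continuity estimate followed by the lower norm-equivalence of Definition/Assumption~\ref{ass:spaceW1}, namely $\norm{\hat u}{\mathcal{W}(V_0,V_0^*)} \leq C_1^{-1} \norm{u}{W(V,V^*)}$, to conclude
\[
\max_{t \in [0,T]} \norm{u(t)}{H(t)} \leq C_H \cdot C \cdot C_1^{-1} \norm{u}{W(V,V^*)},
\]
which is the claimed inequality after absorbing constants.

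I do not expect a genuine obstacle here, since every ingredient is already in place; the only point requiring slight care is the logical order of \emph{pointwise well-definedness} versus the \emph{norm estimate}. The well-definedness must be argued first, because $u(t)$ a priori is only an equivalence class modulo almost-everywhere equality, and one must invoke the continuous representative of $\hat u$ furnished by Lemma~\ref{lem:spaceW0}(1) to select a canonical value $u(t) = \phi_t \hat u(t)$ at every $t$. Once that continuous representative is fixed, the pushforward maps $\phi_t$ are bona fide homeomorphisms and the norm chain above is purely mechanical.
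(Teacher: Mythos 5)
Your proof is correct and follows exactly the route the paper intends: the paper dispatches this lemma as ``an easy consequence of the continuity of the embedding $\mathcal W(V_0,V_0^*) \subset C^0([0,T];H_0)$'', which is precisely your chain of pulling back via Assumption~\ref{ass:spaceW1}, invoking Lemma~\ref{lem:spaceW0}(1), and pushing forward with the uniform bound on $\phi_t$. Your added remark about first fixing the continuous representative of $\hat u$ before evaluating pointwise is a correct and worthwhile clarification of a point the paper leaves implicit.
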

This lemma allows us to define the subspace
\begin{align*}
W_0(V,V^*) &= \{ u \in W(V,V^*) \mid u(0) = 0 \}.
\end{align*}
\begin{definition}[The space $W(V,H)$]Define the space
\begin{align*}
W(V,H) &= \{ u \in L^2_V \mid \dot u \in L^2_{H}\}.
\end{align*}
\end{definition}
In order to obtain a regularity result, we need to make the following natural assumption, which will also tell us that $W(V,H)$ is a Hilbert space.
\begin{ass}\label{ass:evolvingSpaceEquivalenceStronger}
We assume that there is an evolving space equivalence between $W(V,H)$ and $\mathcal W(V_0,H_0)$.
\end{ass}
Let us note that this assumption follows if, for example, the assumption \eqref{eq:assIso} is changed in the natural way and the maps $\hat S(t)$ and $\hat D(t)$ of Theorem \ref{thm:spaceW1} satisfy $\hat S(t)\colon H_0 \to H_0$ and $\hat D(t)\colon V_0 \to H_0$, with both maps and $\hat S(t)^{-1}$ being bounded independently of $t \in [0,T]$, and if $\hat S(\cdot) u'(\cdot)$, $\hat D(\cdot)u(\cdot) \in L^2(0,T;H_0)$ for $u \in \mathcal{W}(V_0,H_0)$.

\paragraph{Some density results}With the help of the density result in Lemma \ref{lem:spaceW0}, it is easy to prove the following lemma.
\begin{lemma}\label{lem:densityOfDVinW}The space $\mathcal{D}_V[0,T]$ in dense in $W(V,V^*)$.
\end{lemma}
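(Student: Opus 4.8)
The plan is to transport the known density statement for ordinary Sobolev--Bochner spaces (Lemma \ref{lem:spaceW0}, item 2) across the evolving space equivalence of Definition \ref{ass:spaceW1}. Concretely, I would introduce the pullback map $\Psi\colon W(V,V^*) \to \mathcal W(V_0,V_0^*)$ given by $\Psi u = \phi_{-(\cdot)}u(\cdot)$, whose inverse is the pushforward $\Psi^{-1} w = \phi_{(\cdot)}w(\cdot)$. By Definition \ref{ass:spaceW1} this map is a linear bijection between the two spaces, and the two-sided norm equivalence $C_1\norm{\phi_{-(\cdot)}v(\cdot)}{\mathcal W(V_0, V_0^*)} \leq \norm{v}{W(V,V^*)} \leq C_2\norm{\phi_{-(\cdot)}v(\cdot)}{\mathcal W(V_0, V_0^*)}$ shows that both $\Psi$ and $\Psi^{-1}$ are bounded. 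Hence $\Psi$ is a linear homeomorphism between these Hilbert spaces.

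Next I would observe that this homeomorphism carries $\mathcal{D}_V[0,T]$ exactly onto $\mathcal{D}([0,T];V_0)$. This is immediate from the definition of $\mathcal{D}_V[0,T]$: an element $\eta \in L^2_V$ belongs to $\mathcal{D}_V[0,T]$ precisely when $\phi_{-(\cdot)}\eta(\cdot) = \Psi\eta \in \mathcal{D}([0,T];V_0)$, so that $\Psi(\mathcal{D}_V[0,T]) = \mathcal{D}([0,T];V_0)$, equivalently $\mathcal{D}_V[0,T] = \Psi^{-1}(\mathcal{D}([0,T];V_0))$. One should briefly check here that every pushforward $\phi_{(\cdot)}w(\cdot)$ with $w \in \mathcal{D}([0,T];V_0)$ genuinely lies in $W(V,V^*)$, but this is clear since $\mathcal{D}([0,T];V_0) \subset \mathcal W(V_0,V_0^*)$ and $\Psi^{-1}$ maps $\mathcal W(V_0,V_0^*)$ into $W(V,V^*)$.

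Finally, since $\mathcal{D}([0,T];V_0)$ is dense in $\mathcal W(V_0,V_0^*)$ by Lemma \ref{lem:spaceW0}, and a homeomorphism maps dense subsets to dense subsets, applying $\Psi^{-1}$ yields that $\mathcal{D}_V[0,T]$ is dense in $W(V,V^*)$. Spelled out: given $u \in W(V,V^*)$, I would pick $w_n \in \mathcal{D}([0,T];V_0)$ with $w_n \to \Psi u$ in $\mathcal W(V_0,V_0^*)$, and then the boundedness of $\Psi^{-1}$ gives $\Psi^{-1}w_n \to u$ in $W(V,V^*)$ with each $\Psi^{-1}w_n \in \mathcal{D}_V[0,T]$. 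There is essentially no genuine obstacle in this argument; the only points requiring a moment of care are verifying that the norm equivalence really makes $\Psi^{-1}$ continuous, so that convergence is preserved, and confirming the set identification $\Psi(\mathcal{D}_V[0,T]) = \mathcal{D}([0,T];V_0)$ holds on the nose, both of which follow directly from the definitions.
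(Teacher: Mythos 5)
Your proposal is correct and is exactly the argument the paper intends: the paper's one-line justification (``with the help of the density result in Lemma \ref{lem:spaceW0}'') amounts to transporting the density of $\mathcal{D}([0,T];V_0)$ in $\mathcal W(V_0,V_0^*)$ through the linear homeomorphism furnished by the evolving space equivalence of Definition \ref{ass:spaceW1}, noting that $\mathcal{D}_V[0,T]$ is by definition the preimage of $\mathcal{D}([0,T];V_0)$ under the pullback. Your added checks (boundedness of both directions from the norm equivalence, and that pushforwards of $\mathcal{D}([0,T];V_0)$ functions land in $W(V,V^*)$) are the right points of care and are handled correctly.
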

The next few results are necessary to prove Lemma \ref{lem:temamType}, which turns out to be vital for one of our existence proofs.
\begin{lemma}\label{cor:densityDV}For every $\eta \in \mathcal{D}_V(0,T)$, there exists a sequence $\{\eta_n\} \subset \mathcal{D}_V(0,T)$
of the form
\[\eta_n(t) = \sum_{j=1}^n \zeta_j(t) \phi_t w_j\qquad \text{where $\zeta_j \in \mathcal{D}(0,T)$ and $w_j \in V_0$,}\]
such that $\eta_n \to \eta$ in $W(V,V^*)$.
\end{lemma}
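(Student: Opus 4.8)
The plan is to reduce everything to a tensor-product approximation in the fixed reference space $V_0$ via the evolving space equivalence of Definition~\ref{ass:spaceW1}, and then to exploit the separability of $V_0$ through an orthonormal basis expansion. The point of working in $V_0$ is that the maps $\phi_t$ turn a \emph{separated-variables} function $\sum_j \zeta_j(t) w_j$ on the reference space into exactly the desired form $\sum_j \zeta_j(t)\phi_t w_j$.

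First I would pull back: writing $\tilde\eta := \phi_{-(\cdot)}\eta(\cdot)$, the hypothesis $\eta \in \mathcal{D}_V(0,T)$ means precisely that $\tilde\eta \in \mathcal{D}((0,T);V_0)$, i.e.\ $\tilde\eta$ is a smooth, compactly supported $V_0$-valued function. By Definition~\ref{ass:spaceW1} it then suffices to produce $\tilde\eta_n \in \mathcal{D}((0,T);V_0)$ of the form $\tilde\eta_n(t) = \sum_{j=1}^n \zeta_j(t) w_j$ with $\zeta_j \in \mathcal{D}(0,T)$ and $w_j \in V_0$, such that $\tilde\eta_n \to \tilde\eta$ in $\mathcal{W}(V_0,V_0^*)$; pushing forward gives $\eta_n(t) = \phi_t\tilde\eta_n(t) = \sum_{j=1}^n \zeta_j(t)\phi_t w_j$ with $\phi_{-(\cdot)}\eta_n(\cdot) = \tilde\eta_n \in \mathcal{D}((0,T);V_0)$, so that each $\eta_n \in \mathcal{D}_V(0,T)$ has the required shape, and the norm equivalence (applied to $\eta_n - \eta$) transfers the convergence back to $W(V,V^*)$.

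To construct $\tilde\eta_n$, I would fix an orthonormal basis $\{e_k\}_{k \in \mathbb{N}}$ of the separable Hilbert space $V_0$ and set $\zeta_k(t) := (\tilde\eta(t), e_k)_{V_0}$ and $w_k := e_k$. Since $\tilde\eta$ is smooth with compact support in $(0,T)$ and $(\cdot, e_k)_{V_0}$ is a fixed bounded linear functional, each $\zeta_k$ lies in $\mathcal{D}(0,T)$ and $\zeta_k'(t) = (\tilde\eta'(t), e_k)_{V_0}$. Defining $\tilde\eta_n(t) := \sum_{k=1}^n \zeta_k(t) e_k = P_n\tilde\eta(t)$, the orthogonal projection of $\tilde\eta(t)$ onto $\mathrm{span}\{e_1,\dots,e_n\}$, one has $\tilde\eta_n'(t) = P_n\tilde\eta'(t)$ because differentiation commutes with the fixed projection $P_n$.

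The convergence is then a Parseval-plus-dominated-convergence argument: for each fixed $t$ one has $\|\tilde\eta_n(t)-\tilde\eta(t)\|_{V_0} \to 0$ and $\|\tilde\eta_n'(t)-\tilde\eta'(t)\|_{V_0}\to 0$, with the pointwise dominations $\|\tilde\eta_n(t)-\tilde\eta(t)\|_{V_0}\le\|\tilde\eta(t)\|_{V_0}$ and $\|\tilde\eta_n'(t)\|_{V_0}\le\|\tilde\eta'(t)\|_{V_0}$; since both $\tilde\eta$ and $\tilde\eta'$ lie in $L^2(0,T;V_0)$ (being smooth and compactly supported), dominated convergence yields $\tilde\eta_n\to\tilde\eta$ and $\tilde\eta_n'\to\tilde\eta'$ in $L^2(0,T;V_0)$. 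As $V_0 \hookrightarrow V_0^*$ continuously the derivative convergence holds in $L^2(0,T;V_0^*)$ as well, and since each $\tilde\eta_n$ is smooth and compactly supported its weak derivative in $\mathcal{W}(V_0,V_0^*)$ coincides with $\tilde\eta_n'$; hence $\tilde\eta_n \to \tilde\eta$ in $\mathcal{W}(V_0,V_0^*)$, finishing the proof. I do not expect a genuine obstacle, as every step is routine; the one point requiring care is the interchange of differentiation with the projection (equivalently, $\zeta_k'(t) = (\tilde\eta'(t),e_k)_{V_0}$ and that $\tilde\eta_n$ carries a weak derivative equal to $\tilde\eta_n'$), which is what secures convergence in the full $\mathcal{W}(V_0,V_0^*)$ norm rather than merely in $L^2(0,T;V_0)$.
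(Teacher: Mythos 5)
Your proposal is correct and follows essentially the same route as the paper: pull back to the reference space, expand in an orthonormal basis of $V_0$ so that $\zeta_j(t) = (\tilde\eta(t), w_j)_{V_0}$, verify $\zeta_j \in \mathcal{D}(0,T)$, and conclude by pointwise convergence plus the uniform domination $\norm{\tilde\eta_n(t)}{V_0} \leq \norm{\tilde\eta(t)}{V_0}$ and dominated convergence, applied to both the functions and their derivatives. The only difference is that you spell out a few routine steps (the evolving space equivalence transfer and the identification of the weak derivative of $\tilde\eta_n$) that the paper leaves implicit.
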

\begin{proof}
It suffices to show that for every $\psi \in \mathcal{D}((0,T);V_0)$, there exists a sequence $\{\psi_n\} \subset \mathcal{D}((0,T);V_0)$ of the form
\[\psi_n(t) = \sum_{j=1}^n \zeta_j(t) w_j\qquad \text{where $\zeta_j \in \mathcal{D}(0,T)$ and $w_j \in V_0$,}\]
such that $\psi_n \to \psi$ in $\mathcal W(V_0,V_0^*)$.

Let $w_j$ be an orthonormal basis for $V_0$. Given $\psi \in \mathcal{D}((0,T);V_0)$, define 
\[\psi_n(t) = \sum_{j=1}^n (\psi(t), w_j)_{V_0}w_j,\]
i.e., $\zeta_j(t) = (\psi(t), w_j)_{V_0}.$ It is clear that $\zeta_j$ vanishes at the boundary (since $\psi$ does), and 
$\zeta_j^{(m)}(t) = (\psi^{(m)}(t), w_j)_{V_0}$
also implies that $\zeta_j \in \mathcal{D}(0,T)$. What remains to be checked is that $\psi_n \to \psi$ in $\mathcal W(V_0, V_0^*)$. We have the pointwise convergence $\psi_n(t) \to \psi(t)$ in $V_0$ because $w_j$ is a basis, and there is also the uniform bound $\norm{\psi_n(t)}{V_0} \leq \norm{\psi(t)}{V_0}$. So by the dominated convergence theorem,
\[\psi_n \to \psi \qquad \text{in $L^2(0,T;V_0)$}.\]
The same reasoning applied to $\psi_n'$ allows us to conclude.
\end{proof}

\paragraph{Transport theorem}
Like in part (3) of Lemma \ref{lem:spaceW0}, we want to differentiate the inner product on $\Ht$. Writing Lemma \ref{ass:bHatDifferentiableSecond} in different notation, we obtain for $u$, $v \in C^1_H$ the transport theorem for $C^1_H$ functions:
\[\frac{d}{dt}(u(t),v(t))_{\Ht}=(\dot u(t), v(t))_{\Ht} + (u(t), \dot v(t))_{\Ht} + \symbolForLittlec(t;u(t),v(t)).\]
We can obtain a formula for general functions $u$, $v \in W(V,V^*)$ by means of a density argument.
\begin{theorem}[Transport theorem]\label{thm:transportTheorem}
For all $u$, $v \in W(V,V^*)$, the map
\[t \mapsto (u(t),v(t))_{\Ht}\]
is absolutely continuous on $[0,T]$ and 
\[\frac{d}{dt}(u(t),v(t))_{\Ht} = \langle \dot u(t), v(t) \rangle_{\Vmt, \Vt}+\langle \dot v(t), u(t) \rangle_{\Vmt, \Vt} + \symbolForLittlec(t;u(t),v(t))\]
for almost every $t \in [0,T]$.
\end{theorem}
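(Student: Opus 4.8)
The plan is to transfer the transport formula, which we already know for $C^1_H$ functions, to general $u$, $v \in W(V,V^*)$ by a density-and-limit argument anchored in the isomorphism with the standard Sobolev--Bochner space $\mathcal W(V_0,V_0^*)$. First I would invoke Lemma \ref{lem:densityOfDVinW}: the smooth space $\mathcal D_V[0,T]$ is dense in $W(V,V^*)$, so I can pick sequences $u_n$, $v_n \in \mathcal D_V[0,T] \subset C^1_H$ converging to $u$, $v$ respectively in $W(V,V^*)$. For these smooth approximants the transport theorem for $C^1_H$ functions (stated just before the theorem via Lemma \ref{ass:bHatDifferentiableSecond}) gives the pointwise identity, and its integrated form yields the integration-by-parts formula
\begin{equation*}
(u_n(T),v_n(T))_{H(T)} - (u_n(0),v_n(0))_{H_0} = \int_0^T \langle \dot u_n, v_n\rangle_{\Vmt,\Vt} + \int_0^T \langle \dot v_n, u_n\rangle_{\Vmt,\Vt} + \int_0^T \symbolForLittlec(t;u_n,v_n).
\end{equation*}

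The core of the argument is then to pass to the limit in each term. On the right-hand side, the first two integrals are continuous bilinear forms in the $L^2_{V^*} \times L^2_V$ pairing, so convergence of $u_n \to u$, $v_n \to v$ in $W(V,V^*)$ (which controls both the $L^2_V$ and the $L^2_{V^*}$ components of the material derivatives) passes through by Cauchy--Schwarz. The $\symbolForLittlec$ term is handled using Lemma \ref{lem:bilinearFormc}, which supplies the uniform bound $|\symbolForLittlec(t;u,v)| \le C\norm{u}{\Ht}\norm{v}{\Ht}$; this makes $(u,v) \mapsto \int_0^T \symbolForLittlec(t;u(t),v(t))$ continuous on $L^2_H \times L^2_H$, and $L^2_V \hookrightarrow L^2_H$ continuously, so this term also converges. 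For the boundary terms, I would use Lemma \ref{lem:initialConditionsWellDefined}: the embedding $W(V,V^*) \subset C^0_H$ is continuous and gives $\max_t \norm{u_n(t)-u(t)}{\Ht} \le C\norm{u_n-u}{W(V,V^*)} \to 0$, so the evaluations at $t=0$ and $t=T$ converge in $H_0$ and $H(T)$ respectively. This establishes the integration-by-parts identity for all $u$, $v \in W(V,V^*)$.

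To upgrade from the integrated identity to the pointwise statement and absolute continuity, I would run the same boundary-term argument over an arbitrary subinterval $[t_1,t_2] \subset [0,T]$ in place of $[0,T]$; this shows that $t \mapsto (u(t),v(t))_{\Ht}$ equals a constant plus the integral of the (integrable, by Lemmas \ref{lem:dualPairingIsIntegrable} and \ref{lem:bilinearFormc}) right-hand side, i.e. it is an indefinite integral, hence absolutely continuous, and the stated pointwise formula holds for almost every $t$ by the Lebesgue differentiation theorem. Alternatively, and perhaps more cleanly, I would note that the whole statement pulls back to part (3) of Lemma \ref{lem:spaceW0} for $\mathcal W(V_0,V_0^*)$: writing $\tilde u = \phi_{-(\cdot)}u(\cdot)$, $\tilde v = \phi_{-(\cdot)}v(\cdot)$, the map $t \mapsto (u(t),v(t))_{\Ht} = \hat b(t;\tilde u(t),\tilde v(t))$ can be differentiated via Lemma \ref{ass:bHatDifferentiableSecond} once one knows $\tilde u$, $\tilde v \in \mathcal W(V_0,V_0^*)$ (guaranteed by the evolving space equivalence, Definition/Assumption \ref{ass:spaceW1}), and absolute continuity of $t \mapsto \hat b(t;\tilde u(t),\tilde v(t))$ follows from that of $t \mapsto (\tilde u(t),\tilde v(t))_{H_0}$ together with the regularity built into Assumption \ref{asss:differentiabilityOfNorm} on $\theta$.

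The main obstacle I anticipate is bookkeeping in the $\symbolForLittlec$ term and the extra care needed because the inner product lives in the $t$-dependent space $\Ht$: unlike the fixed-space case, one cannot directly quote Lemma \ref{lem:spaceW0}, and one must verify that the density approximation respects all three of the $L^2_V$, $L^2_{V^*}$, and $C^0_H$ topologies simultaneously. The cleanest route is to reduce everything to the fixed space $\mathcal W(V_0,V_0^*)$ via the isomorphism and the evolving space equivalence, so that the delicate differentiation of the time-dependent inner product is encapsulated entirely in Lemma \ref{ass:bHatDifferentiableSecond}; the density passage to the limit then only needs the uniform bounds already recorded in Lemmas \ref{lem:bilinearFormc} and \ref{lem:initialConditionsWellDefined}.
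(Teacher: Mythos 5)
Your primary argument is correct and follows essentially the same route as the paper: density of $\mathcal{D}_V[0,T]$ in $W(V,V^*)$ (Lemma \ref{lem:densityOfDVinW}), the transport identity for $C^1_H$ functions, and a limit passage using the uniform bounds on $\symbolForLittlec(t;\cdot,\cdot)$ and on the duality pairing; the only (harmless) difference is that you obtain absolute continuity by integrating over arbitrary subintervals and invoking Lebesgue differentiation, whereas the paper passes to the limit in the distributional formulation against $\zeta\in\mathcal{D}(0,T)$ and then polarizes. The one caveat is that your sketched ``alternative'' route via Lemma \ref{ass:bHatDifferentiableSecond} would not close as stated, since that lemma requires arguments in $C^1([0,T];H_0)$ and so cannot be applied directly to pullbacks of general $W(V,V^*)$ functions without the very density argument you already carry out in the main route.
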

\begin{proof}
Given $u \in W(V,V^*)$, by Lemma \ref{lem:densityOfDVinW}, there exists a sequence $u_m \in \mathcal{D}_V[0,T]$ converging to $u$ in $W(V,V^*)$. 
By the transport theorem for $C^1_H$ functions, the $u_m$ satisfy
\begin{equation*}
\frac{d}{dt}\norm{u_m(t)}{\Ht}^2 = 2(\dot u_m(t), u_m(t))_{H(t)} + \symbolForLittlec(t;u_m(t),u_m(t)).
\end{equation*} 
This statement written in terms of weak derivatives is that for any $\zeta \in \mathcal{D}(0,T)$, it holds that
\begin{align}
\nonumber -\int_0^T &\norm{u_m(t)}{\Ht}^2\zeta'(t)\\
 &= \int_0^T \left(2\langle \dot u_m(t), u_m(t)\rangle_{\Vmt, \Vt} + \symbolForLittlec(t;u_m(t),u_m(t))\right)\zeta(t).\label{eq:proofTransportTheorem2}
\end{align}
Now we must pass to the limit in this equation. For the left hand side, because $u_m \to u$ in $L^2_H$, we have by the reverse triangle inequality
\[\int_0^T \big|\norm{u_m(t)}{\Ht} - \norm{u(t)}{\Ht}\big|^2 \leq \int_0^T \norm{u_m(t) - u(t)}{\Ht}^2 \to 0,\]
i.e.,
$\norm{u_m(\cdot)}{H(\cdot)} \to \norm{u(\cdot)}{H(\cdot)}$ in $L^2(0,T)$, which implies that
\[\norm{u_m(\cdot)}{H(\cdot)}^2 \to \norm{u(\cdot)}{H(\cdot)}^2\qquad\text{in $L^1(0,T)$.}\] 
Clearly, the functional $F\colon L^1(0,T) \to \mathbb{R}$, defined
\[F(y) = \int_0^T y(t)\zeta'(t),\]
is an element of $L^1(0,T)^*$ because $\zeta'(t)$ is bounded. Therefore, we have convergence of the left hand side of \eqref{eq:proofTransportTheorem2}:
\[-\int_0^T \norm{u_m(t)}{\Ht}^2\zeta'(t) \to -\int_0^T \norm{u(t)}{\Ht}^2\zeta'(t).\]
To deal with the terms on the right hand side of \eqref{eq:proofTransportTheorem2}, we require the estimates
\begin{align*}
&|\langle \dot u_m(t), u_m(t) \rangle_{\Vmt, \Vt} -\langle \dot u(t), u(t) \rangle_{\Vmt, \Vt}|\\
&\leq \norm{\dot u_m(t)}{\Vmt}\norm{u_m(t) - u(t)}{\Vt} + \norm{\dot u_m(t) - \dot u(t)}{\Vmt}\norm{u(t)}{\Vt}\\
\intertext{and}
&|\symbolForLittlec(t;u_m(t),u_m(t)) - \symbolForLittlec(t;u(t),u(t))|\\
&\quad \leq C_1\left(\norm{u_m(t)}{\Ht}\norm{u_m(t)-u(t)}{\Ht}+\norm{u_m(t)-u(t)}{\Ht}\norm{u(t)}{\Ht}\right).
\end{align*}
With these, it is easy to show that
\begin{align*}
&\bigg|\int_0^T \left(2\langle \dot u_m(t), u_m(t)\rangle_{\Vmt, \Vt} + \symbolForLittlec(t;u_m(t),u_m(t))\right)\zeta(t)\\
&\qquad  \qquad \qquad - \int_0^T \left(2\langle \dot u(t), u(t)\rangle_{\Vmt, \Vt} + \symbolForLittlec(t;u(t),u(t))\right)\zeta(t)\bigg|
\to 0.
\end{align*}
In other words, as $m \to \infty$, the equation \eqref{eq:proofTransportTheorem2} becomes
\begin{equation}\label{eq:proofTransportTheorem4}
-\int_0^T \norm{u(t)}{\Ht}^2\zeta'(t)=\int_0^T \left(2\langle \dot u(t), u(t)\rangle_{\Vmt, \Vt} + \symbolForLittlec(t;u(t),u(t))\right)\zeta(t),
\end{equation}
which is precisely the statement
\[\frac{d}{dt}\norm{u(t)}{\Ht}^2 = 2\langle \dot u(t), u(t)\rangle_{\Vmt, \Vt} + \symbolForLittlec(t;u(t),u(t))\]
in the sense of distributions.
From this, it follows that
\begin{align}
\frac{d}{dt}(u(t),v(t))_{\Ht} &= \langle \dot u(t), v(t) \rangle_{\Vmt, \Vt}+\langle \dot v(t), u(t) \rangle_{\Vmt, \Vt}+ \symbolForLittlec(t;u(t),v(t))\label{eq:proofTransportTheorem5}
\end{align}
holds in the weak sense. So we have shown the transport theorem in the weak sense. However, because the right hand side of the above is in $L^1(0,T)$ (since the right hand side of \eqref{eq:proofTransportTheorem4} holds for every $\zeta \in \mathcal{D}(0,T)$) and because $(u(t),v(t))_{\Ht} \in L^1(0,T)$, it follows that $(u(t),v(t))_{\Ht}$ is a.e. equal to an absolutely continuous function, with (classical) derivative a.e., and therefore \eqref{eq:proofTransportTheorem5} exists in the classical sense.
\end{proof}
We shall use the following corollary frequently without referencing in future sections.
\begin{corollary}[Integration by parts]
For all $u$, $v \in W(V,V^*)$, the integration by parts formula
\begin{align*}
(u(T)&,v(T))_{H(T)} - (u(0),v(0))_{H_0} \\
&= \int_0^T\langle \dot u(t), v(t) \rangle_{\Vmt, \Vt}+ \langle \dot v(t), u(t) \rangle_{\Vmt, \Vt}
+ \symbolForLittlec(t;u(t),v(t))\;\mathrm{d}t
\end{align*}
holds.
\end{corollary}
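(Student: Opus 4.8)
The plan is to simply integrate the pointwise identity furnished by the Transport Theorem (Theorem \ref{thm:transportTheorem}) over the interval $[0,T]$. First I would invoke that theorem, which guarantees for $u$, $v \in W(V,V^*)$ that the scalar-valued map $t \mapsto (u(t),v(t))_{\Ht}$ is absolutely continuous on $[0,T]$ and has, for almost every $t$, the derivative
\[\frac{d}{dt}(u(t),v(t))_{\Ht} = \langle \dot u(t), v(t) \rangle_{\Vmt, \Vt}+\langle \dot v(t), u(t) \rangle_{\Vmt, \Vt} + \symbolForLittlec(t;u(t),v(t)).\]

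Since an absolutely continuous function on a compact interval is the Lebesgue integral of its almost-everywhere derivative, the fundamental theorem of calculus gives
\[(u(T),v(T))_{H(T)} - (u(0),v(0))_{\Hs} = \int_0^T \frac{d}{dt}(u(t),v(t))_{\Ht}\;\mathrm{d}t,\]
and substituting the transport identity into the integrand yields exactly the claimed formula. The endpoint evaluations are meaningful because Lemma \ref{lem:initialConditionsWellDefined} supplies the continuous embedding $W(V,V^*) \subset C^0_H$, so $u(t)$ and $v(t)$ are well-defined elements of $\Ht$ for every $t \in [0,T]$; in particular $u(0)$, $v(0) \in \Hs$ and $u(T)$, $v(T) \in H(T)$.

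There is essentially no obstacle: all the substance is already contained in the Transport Theorem, and absolute continuity is precisely the regularity required to apply the fundamental theorem of calculus. The one routine point worth checking is that the integrand on the right-hand side lies in $L^1(0,T)$, so that the integral is finite: the two duality pairings are integrable by Lemma \ref{lem:dualPairingIsIntegrable} (using $\dot u$, $\dot v \in L^2_{V^*}$ and $u$, $v \in L^2_V$), while $t \mapsto \symbolForLittlec(t;u(t),v(t))$ is integrable by the uniform boundedness of $\symbolForLittlec(t;\cdot,\cdot)$ from Lemma \ref{lem:bilinearFormc} together with $u$, $v \in L^2_H$.
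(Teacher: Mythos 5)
Your proof is correct and is exactly the route the paper intends: the corollary is stated as an immediate consequence of Theorem \ref{thm:transportTheorem}, obtained by integrating the absolutely continuous map $t \mapsto (u(t),v(t))_{\Ht}$ over $[0,T]$ via the fundamental theorem of calculus for absolutely continuous functions. Your additional check that the integrand lies in $L^1(0,T)$ is sound (and in fact already guaranteed by absolute continuity), so nothing is missing.
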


\section{Formulation of the problem and statement of results}\label{sec:main}

\subsection{Precise formulation of the PDE}

Having built up the essential function spaces and results, we are now in a position to formulate PDEs on evolving spaces. We continue with the framework and notation of \S \ref{sec:function_spaces}; we reiterate in particular Assumptions \ref{asss:compatibilityOfEvolvingHilbertTriple}, \ref{asss:differentiabilityOfNorm}, and \ref{ass:spaceW1} (which relate respectively to the compatibility of the evolving Hilbert spaces, a well-defined material derivative, and the evolving space equivalence). We are interested in the existence and uniqueness of solutions $u \in W(V,V^*)$ to equations of the form
\begin{equation}\label{eq:operatorEquation}
\begin{aligned}
L \dot u + A u+ \symbolForBigC u &= f &&\text{in $L^2_{V^*}$}\\
u(0) &= u_0 &&\text{in $H_0$},
\end{aligned}\tag{$\textbf{P}$}
\end{equation}
where we identify 
\begin{equation*}\label{eq:identificationOfOperators}
\begin{aligned}
(L\dot u)(t) &= L(t)\dot u(t)\\
(Au)(t) &= A(t)u(t)\\
(\symbolForBigC u)(t) &= \symbolForBigC(t)u(t),
\end{aligned}
\end{equation*}
with $L(t)$ and $A(t)$ being linear operators that satisfy the minimal assumptions given below, and 
\begin{align*}
&\symbolForBigC(t)\colon\Ht \to \Hmt \quad \text{is defined by} \quad \langle \symbolForBigC(t) v,w  \rangle_{\Hmt, \Ht} := \symbolForLittlec(t;v,w),\end{align*}
with $\symbolForLittlec(t;\cdot,\cdot)$ the bilinear form in the definition of the weak material derivative (Definition \ref{defn:bilinearFormc}). Note that $\symbolForBigC(t)$ is symmetric in the sense that
$ \langle \symbolForBigC(t) v, w \rangle= \langle
\symbolForBigC(t) w, v \rangle.$
\begin{remark}We showed in Lemma \ref{lem:initialConditionsWellDefined} that specifying the initial condition as in \eqref{eq:operatorEquation} is well-defined.
\end{remark} 
\begin{asss}[Assumptions on $L(t)$]\label{asss:onL}
In the following, all constants $C_i$ are positive and independent of $t \in [0,T]$. 

We shall assume that for all $g \in L^2_{V^*}$,
\begin{align}
Lg \in L^2_{V^*}\qquad\text{and}\qquad C_1 \norm{g}{L^2_{V^*}} \leq \norm{Lg}{L^2_{V^*}} \leq C_2 \norm{g}{L^2_{V^*}}\label{eq:assLvstarInL2Vstar}\tag{L1}.
\end{align}
We suppose that the restriction $L|_{L^2_H}$  satisfies $L|_{L^2_H}\colon L^2_H \to L^2_H$, we identify $(L|_{L^2_H}h)(t) =: L_H(t)h(t),$ and we suppose that 
\begin{align*}
L_H(t)&\colon \Ht \to \Ht \text{ is symmetric, and}\\
L_H(t)&\colon \Vt \to \Vt.
\end{align*}
We simply write $L$ and $L(t)$ for the above restrictions. Furthermore, for almost every $t \in [0,T]$, we assume
\begin{align}
\langle L(t)g,v\rangle_{\Vmt, \Vt} &= \langle g,L(t)v\rangle_{\Vmt,\Vt}&&\text{$\forall g \in \Vmt$, $\forall v \in \Vt$}\tag{L2}\label{eq:assSymmetricityOfL}\\
\norm{L(t)h}{\Ht} &\leq C_3 \norm{h}{\Ht}&&\text{$\forall h \in \Ht$}\label{eq:assBoundednessOfl}\tag{L3}\\
(L(t)h, h)_{\Ht} &\geq C_4\norm{h}{\Ht}^2&&\text{$\forall h \in \Ht$}\label{eq:assCoercivityOfl}\tag{L4}\\
Lv &\in L^2_V&&\text{$\forall v \in L^2_V$}\label{eq:assLvInL2V}\tag{L5}\\
v \in W(V,V^*) &\iff Lv \in W(V,V^*)\label{eq:assLvInW1}\tag{L6},\\
\intertext{and we suppose the existence of a (linear symmetric) map $\dot{{L}}\colon L^2_V \to L^2_{V^*}$  (and we identify $(\dot{L}v)(t) =: \dot{L}(t)v(t)$) satisfying}
\md (Lv) &= \dot{L} v + L \dot v \in L^2_{V^*} &&\text{$\forall v\in W(V,V^*)$}\label{eq:assProductRule}\tag{L7}\\
\lVert{\dot{L}(t)v}\rVert_{\Vmt} &\leq C_5\norm{v}{\Ht}&&\text{$\forall v \in \Vt$}\label{eq:assBoundednessOfdotL}\tag{L8}.
\end{align}
\end{asss}
\begin{asss}[Assumptions on $A(t)$]\label{asss:aWithoutL}
Suppose that the map
\begin{align}
\nonumber t &\mapsto \langle A(t)v(t),w(t)\rangle_{\Vmt, \Vt} &&\forall v, w \in L^2_V
\intertext{is measurable, and that there exist positive constants $C_1$, $C_2$ and $C_3$ independent of $t$ such that the following holds for almost every $t \in [0,T]$:}
\langle A(t)v,v\rangle_{\Vmt, \Vt} &\geq C_1 \norm{v}{\Vt}^2-C_2 \norm{v}{\Ht}^2&&\forall v \in \Vt\tag{A1}\label{eq:assCoercivityOfa}\\
|\langle A(t)v,w\rangle_{\Vmt, \Vt}| &\leq C_3\norm{v}{\Vt}\norm{w}{\Vt}&&\forall v, w \in \Vt\tag{A2}\label{eq:assBoundednessOfa}.
\end{align}
\end{asss}
Observe that we have generalised the PDE \eqref{eq:operatorEquationIntro} by introducing the operator $L.$  The standard equation
\[\dot u + A u + \symbolForBigC u = f\]
is a special case of \eqref{eq:operatorEquation} when $L = \text{Id}$. Our demands in Assumptions \ref{asss:onL} are (of course) automatically met in this case. Also, there is no loss of generality by considering the equation \eqref{eq:operatorEquation} instead of the more natural equation $L\dot u + Au = f.$ We include the operator $\symbolForBigC$ purely because it is convenient in applications (such as those in \cite{AlpEllStiApplications}).

Implicit in \eqref{eq:operatorEquation} is the claim that $A u$ and $\symbolForBigC u$ are elements of $L^2_{V^*}.$ The fact $A u \in L^2_{V^*}$ follows by the weak (and thus strong) measurability of $t \mapsto \phi_{t}^*A(t)u(t)$ and the boundedness of $A(t)$, and similarly one obtains the result $\symbolForBigC u \in L^2_{V^*}.$ 
Let us mention an important consequence of the transport theorem (Theorem \ref{thm:transportTheorem}) and assumptions \eqref{eq:assSymmetricityOfL}, \eqref{eq:assLvInW1} and \eqref{eq:assProductRule}.
\begin{lemma}\label{lem:derivativeOfL}{For every $v,$ $w \in W(V,V^*)$, the map $t \mapsto (L(t)v(t),w(t))_{\Ht}$ is absolutely continuous with derivative}
\begin{align}
\nonumber \frac{d}{dt}(L(t)v(t),w(t))_{\Ht} &= \langle L(t)\dot v(t), w(t)\rangle_{\Vmt, \Vt} + \langle L(t)\dot w(t), v(t)\rangle_{\Vmt, \Vt} \\
&\quad+ \langle M(t)v(t),w(t)\rangle_{\Vmt,\Vt}\label{eq:assDifferentiabilityOfl}
\end{align}
almost everywhere, where $M(t)\colon \Vt \to \Vmt$ is the operator
\begin{align*}
\langle M(t)v, w \rangle_{\Vmt,\Vt} &:= \langle \dot{L}(t)v, w \rangle_{\Vmt, \Vt} + \langle \symbolForBigC(t) L(t) v, w \rangle_{\Vmt, \Vt}
\end{align*}
which generates the bounded bilinear form $m(t;\cdot,\cdot)\colon \Vt \times \Vt \to \mathbb{R}$:
\[m(t;v,w):=\langle M(t)v, w\rangle_{\Vmt, \Vt}.\]
\end{lemma}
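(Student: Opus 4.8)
The plan is to apply the transport theorem (Theorem \ref{thm:transportTheorem}) to the pair $(Lv, w)$ and then unwind the material derivative of $Lv$ using the product rule and the symmetry of $L$, so that all the analytic content is delegated to the transport theorem and everything else reduces to algebra. First I would observe that since $v \in W(V,V^*)$, assumption \eqref{eq:assLvInW1} guarantees $Lv \in W(V,V^*)$ as well; together with $w \in W(V,V^*)$ this makes the transport theorem applicable to the pair $(Lv, w)$. It immediately yields that $t \mapsto (L(t)v(t), w(t))_{\Ht} = ((Lv)(t), w(t))_{\Ht}$ is absolutely continuous on $[0,T]$, with
\[\frac{d}{dt}(L(t)v(t), w(t))_{\Ht} = \langle \md(Lv)(t), w(t)\rangle_{\Vmt,\Vt} + \langle \dot w(t), L(t)v(t)\rangle_{\Vmt,\Vt} + \symbolForLittlec(t; L(t)v(t), w(t))\]
for almost every $t$. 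This single step establishes the claimed absolute continuity.

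Next I would rewrite each of the three terms on the right. For the first, the product rule \eqref{eq:assProductRule} gives $\md(Lv) = \dot L v + L\dot v$, hence $\langle \md(Lv)(t), w(t)\rangle = \langle L(t)\dot v(t), w(t)\rangle + \langle \dot L(t)v(t), w(t)\rangle$. For the second, I would invoke the symmetry \eqref{eq:assSymmetricityOfL} with $g = \dot w(t) \in \Vmt$ and test element $v(t) \in \Vt$ to convert $\langle \dot w(t), L(t)v(t)\rangle$ into $\langle L(t)\dot w(t), v(t)\rangle$. For the third, by the very definition of $\symbolForBigC(t)$ we have $\symbolForLittlec(t; L(t)v(t), w(t)) = \langle \symbolForBigC(t)L(t)v(t), w(t)\rangle$. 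Collecting the two terms $\langle \dot L(t)v(t), w(t)\rangle$ and $\langle \symbolForBigC(t)L(t)v(t), w(t)\rangle$ reproduces exactly $\langle M(t)v(t), w(t)\rangle$ by the definition of $M(t)$, which gives the stated formula \eqref{eq:assDifferentiabilityOfl}.

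Finally I would check that $M(t)$ generates a bounded bilinear form uniformly in $t$. The estimate $|\langle \dot L(t)v, w\rangle| \leq \norm{\dot L(t)v}{\Vmt}\norm{w}{\Vt} \leq C_5\norm{v}{\Ht}\norm{w}{\Vt}$ from \eqref{eq:assBoundednessOfdotL}, together with $|\symbolForLittlec(t; L(t)v, w)| \leq C\norm{L(t)v}{\Ht}\norm{w}{\Ht} \leq C C_3\norm{v}{\Ht}\norm{w}{\Ht}$ from Lemma \ref{lem:bilinearFormc} and \eqref{eq:assBoundednessOfl}, combined with the continuity of the embedding $\Vt \subset \Ht$ (uniform in $t$, since its constant can be traced through the uniform bounds on $\phi_{\pm t}$ and the fixed embedding $V_0 \subset H_0$), yields $|m(t;v,w)| \leq C\norm{v}{\Vt}\norm{w}{\Vt}$, so $M(t)\colon \Vt \to \Vmt$ is bounded independently of $t$.

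The main obstacle is not analytic but a matter of careful bookkeeping of the duality pairings: one must verify that each rearranged term genuinely lives in the spaces where the $\Vmt$–$\Vt$ pairing is meaningful. In particular, one should check that $\symbolForBigC(t)L(t)v(t) \in \Hmt \subset \Vmt$ so that the transport-theorem remainder $\symbolForLittlec(t;\cdot,\cdot)$ can be merged with the $\dot L v$ term into $M(t)v(t)$, and that the symmetry relation \eqref{eq:assSymmetricityOfL}, stated for a generic $g \in \Vmt$, legitimately applies with $g = \dot w(t)$. Once these identifications are confirmed, the rest is purely algebraic substitution followed by the uniform boundedness estimates above.
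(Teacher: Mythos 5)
Your proposal is correct and follows exactly the route the paper intends: the paper presents this lemma as a direct consequence of the transport theorem (Theorem \ref{thm:transportTheorem}) applied to the pair $(Lv,w)$, combined with \eqref{eq:assLvInW1}, \eqref{eq:assProductRule}, and \eqref{eq:assSymmetricityOfL}, which is precisely your argument. Your additional verification of the uniform boundedness of $m(t;\cdot,\cdot)$ via \eqref{eq:assBoundednessOfdotL}, Lemma \ref{lem:bilinearFormc}, and \eqref{eq:assBoundednessOfl} is a welcome piece of diligence that the paper leaves implicit.
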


To conclude this preliminary subsection we state and prove the following  lemma which is used in \S \ref{sec:proofExistenceSecondGalerkin}.

\begin{lemma}\label{lem:temamType}
Let $u \in L^2_V$ and $g \in L^2_{V^*}.$ Then
\[\dot u \in L^2_{V^*} \text{ exists }\text{and } L\dot u = g \qquad\]
if and only if
\begin{equation}\label{eq:temamType}
\frac{d}{dt}(L(t)u(t),\phi_t v_0)_{\Ht} = \langle g(t)+M(t)u(t), \phi_tv_0 \rangle_{\Vmt, \Vt} ~~ \mbox{for all}~~ v_0 \in V_0
\end{equation}
in the weak sense.
\end{lemma}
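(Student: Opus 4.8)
The plan is to prove both implications by exploiting the \emph{separable} test functions $\eta(t)=\zeta(t)\phi_t v_0$ with $\zeta\in\mathcal D(0,T)$ and $v_0\in V_0$: on the one hand these are exactly the objects appearing on the left of \eqref{eq:temamType}, and on the other hand, by Lemma \ref{cor:densityDV}, their finite linear combinations are dense in the admissible test space $\mathcal D_V(0,T)$. For the forward direction, suppose $\dot u\in L^2_{V^*}$ exists and $L\dot u=g$, so that $u\in W(V,V^*)$. Fixing $v_0\in V_0$ and setting $w:=\phi_{(\cdot)}v_0$, the pullback $\phi_{-(\cdot)}w(\cdot)\equiv v_0$ is constant, so Remark \ref{zeromatderiv} gives $\dot w=0$ and $w\in W(V,V^*)$. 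I would then apply Lemma \ref{lem:derivativeOfL} to $u$ and $w$: the map $t\mapsto(L(t)u(t),\phi_t v_0)_{\Ht}$ is absolutely continuous with a.e.\ derivative $\langle L(t)\dot u(t),\phi_t v_0\rangle_{\Vmt,\Vt}+\langle L(t)\dot w(t),u(t)\rangle_{\Vmt,\Vt}+\langle M(t)u(t),\phi_t v_0\rangle_{\Vmt,\Vt}$. The middle term vanishes and $L(t)\dot u(t)=g(t)$, which yields \eqref{eq:temamType} in the classical, hence weak, sense.

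The reverse direction is the substantive part. Given \eqref{eq:temamType}, the idea is to show that $Lu$ (which lies in $L^2_V$ by \eqref{eq:assLvInL2V}) possesses a weak material derivative with the natural candidate $G:=g+\dot{L}u\in L^2_{V^*}$ (here $\dot{L}u\in L^2_{V^*}$ by the mapping property of $\dot{L}$), and then to transfer the conclusion to $u$ via \eqref{eq:assLvInW1} and \eqref{eq:assProductRule}. Concretely, I would verify the defining identity
\[\int_0^T\langle G(t),\eta(t)\rangle_{\Vmt,\Vt}=-\int_0^T(L(t)u(t),\dot\eta(t))_{\Ht}-\int_0^T\symbolForLittlec(t;L(t)u(t),\eta(t))\]
for all $\eta\in\mathcal D_V(0,T)$. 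For a separable $\eta(t)=\zeta(t)\phi_t v_0$ one has $\dot\eta(t)=\zeta'(t)\phi_t v_0$, and recalling that $\langle M(t)u(t),\phi_t v_0\rangle_{\Vmt,\Vt}=\langle\dot{L}(t)u(t),\phi_t v_0\rangle_{\Vmt,\Vt}+\symbolForLittlec(t;L(t)u(t),\phi_t v_0)$, the identity collapses precisely to the weak form of \eqref{eq:temamType}; so it holds on all separable test functions by hypothesis.

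To pass from separable $\eta$ to a general $\eta\in\mathcal D_V(0,T)$, I would observe that each of the three functionals above is continuous in $\eta$ with respect to convergence of $\phi_{-(\cdot)}\eta(\cdot)$ together with its derivative in $L^2(0,T;V_0)$, and that the approximating sequence built in the proof of Lemma \ref{cor:densityDV} converges in exactly this topology. This extends the identity to all of $\mathcal D_V(0,T)$, giving $\md(Lu)=g+\dot{L}u\in L^2_{V^*}$, hence $Lu\in W(V,V^*)$; then \eqref{eq:assLvInW1} forces $u\in W(V,V^*)$ (so $\dot u\in L^2_{V^*}$ exists), and the product rule \eqref{eq:assProductRule} gives $\md(Lu)=\dot{L}u+L\dot u$, whence $L\dot u=g$. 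The main obstacle is precisely this density step: the term $(L(t)u(t),\dot\eta(t))_{\Ht}$ is \emph{not} stable under mere $W(V,V^*)$-convergence of $\eta$ (which controls $\dot\eta$ only in $L^2_{V^*}$), so one genuinely needs the stronger convergence — $\phi_{-(\cdot)}\eta_n(\cdot)$ and its derivative converging in $L^2(0,T;V_0)$ — delivered by the explicit orthonormal-basis construction of Lemma \ref{cor:densityDV}.
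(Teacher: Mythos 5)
Your proof is correct and follows essentially the same route as the paper: the forward direction via $\md(\phi_t v_0)=0$ and Lemma \ref{lem:derivativeOfL}, and the converse by identifying $\md(Lu)=g+\dot L u$ against separable test functions, extending by the density result of Lemma \ref{cor:densityDV}, and then invoking \eqref{eq:assLvInW1} and \eqref{eq:assProductRule}. The one place you diverge is the closing claim that $\int_0^T(L(t)u(t),\dot\eta(t))_{\Ht}$ is not stable under mere $W(V,V^*)$-convergence of the test functions, so that the stronger convergence of $\phi_{-(\cdot)}\eta_n(\cdot)$ and its derivative in $L^2(0,T;V_0)$ is genuinely needed; this is not so. By \eqref{eq:assLvInL2V} one has $Lu\in L^2_V$, hence $(L(t)u(t),\dot\eta(t))_{\Ht}=\langle\dot\eta(t),L(t)u(t)\rangle_{\Vmt,\Vt}$ and the relevant functional is bounded by $\norm{\dot\eta_n-\dot\eta}{L^2_{V^*}}\norm{Lu}{L^2_V}$, which is exactly how the paper passes to the limit using only the $W(V,V^*)$-convergence stated in Lemma \ref{cor:densityDV}. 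Your stronger convergence is indeed delivered by the orthonormal-basis construction, so your argument still closes, but it leans on the proof of Lemma \ref{cor:densityDV} rather than its statement and names an obstacle that is not actually present.
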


\begin{proof}[Proof of Lemma \ref{lem:temamType}]
{If $u \in W(V,V^*)$ and $L\dot u = g$, then \eqref{eq:temamType} follows easily by utilising $\md(\phi_t v_0) = 0$ and the previous lemma.} For the converse, first, we see from Lemma \ref{cor:densityDV} that given any $\eta \in \mathcal{D}_V(0,T)$, there exist functions $\eta_n \in \mathcal{D}_V(0,T)$ of the form
\[\eta_n(t)=\sum_j \zeta_j(t) \phi_tw_{j}\]
with $\zeta_j \in \mathcal{D}(0,T)$ and $w_{j} \in V_0$ such that $\norm{\eta - \eta_n}{W(V,V^*)} \to 0.$ 
Now, \eqref{eq:temamType} states that
\begin{align*}
\int_0^T (L(t)u(t), \zeta'(t)\phi_tv_0)_{\Ht} 
&= -\int_0^T \langle g(t)+{M}(t)u(t), \zeta(t)\phi_tv_0\rangle_{\Vmt, \Vt}
\end{align*}
holds for all $\zeta \in \mathcal{D}(0,T)$ and all $v_0 \in V_0.$ In particular, we may pick $\zeta = \zeta_j$ and $v_0 = w_{j}$ and sum up over $j$ to obtain
\[\int_0^T (L(t)u(t), \dot\eta_n(t))_{\Ht} 
= -\int_0^T \langle g(t)+{M}(t)u(t), \eta_n(t)\rangle_{\Vmt, \Vt}.\]
Passing to the limit and using the convergence above, we find
\begin{align*}
\int_0^T (L(t)u(t), \dot\eta(t))_{\Ht} 
&= -\int_0^T \langle g(t)+{M}(t)u(t), \eta(t)\rangle_{\Vmt, \Vt}\\
&= -\int_0^T \langle g(t)+\dot{L}(t)u(t) + \symbolForBigC(t)L(t)u(t), \eta(t)\rangle_{\Vmt, \Vt}
\end{align*}
for arbitrary $\eta\in \mathcal D_V(0,T)$, i.e., we have the existence of $\md(Lu) = g + \dot{L}u \in L^2_{V^*}$ which, thanks to assumptions \eqref{eq:assLvInW1} and \eqref{eq:assProductRule} implies that $L\dot u = g$.
\end{proof}
\subsection{Well-posedness and regularity}
We begin with a well-posedness theorem which is proved in \S \ref{sec:proofs}. A sketch of a second proof will be presented in \S \ref{sec:proofExistenceSecondGalerkin} where we utilise a Galerkin method.
\begin{theorem}[Well-posedness of \eqref{eq:operatorEquation}]\label{thm:existenceWithoutL}
Under the assumptions in Assumptions \ref{asss:onL} and \ref{asss:aWithoutL}, for $f \in L^2_{V^*}$ and $u_0 \in H_0$, there is a unique solution $u \in W(V,V^*)$ satisfying \eqref{eq:operatorEquation} such that
\begin{equation*}
\norm{u}{W(V,V^*)} \leq C\left(\norm{u_0}{H_0}+\norm{f}{L^2_{V^*}}\right).
\end{equation*}
\end{theorem}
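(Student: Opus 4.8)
The plan is to transport the abstract problem $(\textbf{P})$ back to the fixed reference spaces via the evolving space equivalence of Assumption~\ref{ass:spaceW1}, solve the resulting standard parabolic problem on $\mathcal W(V_0,V_0^*)$ using the classical Lions--Lax--Milgram theorem, and then push the solution forward. Concretely, I would substitute $u = \phi_{(\cdot)}\bar u(\cdot)$ with $\bar u = \phi_{-(\cdot)}u(\cdot) \in \mathcal W(V_0,V_0^*)$ and test $(\textbf{P})$ against functions $v = \phi_{(\cdot)}\bar v(\cdot)$. Using the transport theorem (Theorem~\ref{thm:transportTheorem}) together with Lemma~\ref{lem:derivativeOfL}, the weak formulation $\langle L\dot u,v\rangle + \langle Au,v\rangle + \langle\symbolForBigC u,v\rangle = \langle f,v\rangle$ becomes, after pulling back all pairings and inner products through $\phi_t,\phi_t^*$, a pulled-back bilinear form $\bar a(t;\bar u,\bar v)$ acting on $V_0\times V_0$ plus the time-derivative term $(L_0(t)\bar u'(t),\bar v(t))_{H_0}$ for appropriate reference operators. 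The goal of this reduction is to arrive at an equation of the abstract Lions form on the fixed Gelfand triple $V_0\subset H_0\subset V_0^*$.

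Next I would verify the hypotheses of the Lions projection theorem (the generalised Lax--Milgram theorem as used by Lions for parabolics). The natural bilinear form to work with lives on $\mathcal D_V[0,T]\times L^2(0,T;V_0)$ (or the equivalent test space respecting the homogeneous terminal condition on the test function), and one needs: boundedness, which follows from \eqref{eq:assBoundednessOfa}, \eqref{eq:assBoundednessOfl}, the bound \eqref{eq:assBoundednessOfdotL}, and the uniform operator norm bounds on $\phi_t,\phi_t^*$; and a coercivity (inf-sup / G{\aa}rding-type) estimate on the diagonal. For the latter I would test with $\bar v = \bar u$ and combine the coercivity \eqref{eq:assCoercivityOfa} of $A(t)$ with the positivity \eqref{eq:assCoercivityOfl} of $L(t)$, using the transport/integration-by-parts identity to convert $\int_0^T\langle L\dot u,u\rangle$ into a boundary term $\tfrac12(L(T)u(T),u(T))_{H(T)}$ minus lower-order symmetric contributions from $M(t)$ and $\symbolForBigC(t)$. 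The $-C_2\|u\|_{H(t)}^2$ defect in \eqref{eq:assCoercivityOfa} is handled by the standard exponential shift $u = e^{\lambda t}\tilde u$ for $\lambda$ large, which restores genuine coercivity in the $L^2_V$-norm; the incorporation of the initial data $u_0$ proceeds as usual by moving it into the right-hand linear functional via the boundary term.

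I would then invoke the Lions theorem to produce a solution $\bar u\in L^2(0,T;V_0)$ of the transported problem and, crucially, argue that $\bar u$ in fact lies in $\mathcal W(V_0,V_0^*)$: this is where one recovers $\bar u' \in L^2(0,T;V_0^*)$ by reading off the equation itself (all other terms lie in $L^2_{V^*}$ after pullback, and $L(t)$ is boundedly invertible in the relevant sense by \eqref{eq:assLvstarInL2Vstar}), so that the time-derivative term is well-defined. Pushing forward through the evolving space equivalence gives $u\in W(V,V^*)$, and Lemma~\ref{lem:temamType} is exactly the tool that certifies $L\dot u = f - Au - \symbolForBigC u$ holds in $L^2_{V^*}$ from the weak identity tested against $\phi_t v_0$. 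Uniqueness follows by setting $f=0$, $u_0=0$, testing against $u$ itself and applying the same coercivity estimate (or Gronwall after the transport identity). The energy bound $\norm{u}{W(V,V^*)}\le C(\norm{u_0}{H_0}+\norm{f}{L^2_{V^*}})$ comes from the a~priori estimate in the coercivity step, transported back via the norm equivalence in Assumption~\ref{ass:spaceW1}.

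I expect the main obstacle to be the careful bookkeeping in the coercivity/a~priori estimate: the presence of the operator $L$ means $\int_0^T\langle L\dot u,u\rangle$ does not integrate cleanly to a pure boundary term, but rather produces the extra $m(t;u,u)$ contribution from Lemma~\ref{lem:derivativeOfL} involving $\dot L$ and $\symbolForBigC L$; one must check that these lower-order terms are controlled by $\|u\|_{H(t)}^2$ (using \eqref{eq:assBoundednessOfdotL} and the boundedness of $\symbolForLittlec$) so that they are absorbed by the $\lambda$-shift without destroying the coercivity constant $C_1 C_4$ coming from $A$ and $L$. A secondary technical point is ensuring that the transported bilinear form is measurable in $t$ and that all the pullback manipulations are justified on the dense test space $\mathcal D_V[0,T]$ (Lemma~\ref{lem:densityOfDVinW}) before passing to the full space; this density, together with the transport theorem, is what legitimises the formal integration by parts used throughout.
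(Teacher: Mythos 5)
Your proposal is sound in outline, but it takes a genuinely different route from the paper. You pull the problem back to the fixed Gelfand triple $V_0 \subset H_0 \subset V_0^*$ via the evolving space equivalence of Assumption~\ref{ass:spaceW1} and then apply the classical Lions projection theorem (coercivity of the space--time bilinear form on the diagonal of a test subspace carrying the time derivative), recovering uniqueness by a separate Gronwall/energy argument and the material derivative via Lemma~\ref{lem:temamType}. The paper never writes down the pulled-back equation: it stays on the evolving spaces, reduces to zero initial data by subtracting a lift $\tilde y = \phi_{(\cdot)}y(\cdot)$ to get \eqref{eq:necasPDEzero}, and applies the Banach--Ne\v{c}as--Babu\v{s}ka theorem (Theorem~\ref{thm:bnb}) with $X = W_0(V,V^*)$, $Y = L^2_V$ and $d(u,v) = \langle L\dot u + Au + \symbolForBigC u, v\rangle_{L^2_{V^*},L^2_V}$. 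Existence and uniqueness then come simultaneously from two conditions: the inf-sup estimate of Lemma~\ref{lem:necaseInfSup}, proved with the explicit test function $v_w = z + \mu e^{-\gamma\cdot}w$ where $z$ is the $L^2_V$-Riesz representative of $L\dot w$, and the nondegeneracy condition of Lemma~\ref{lem:necasCondition}, which is a nontrivial duality argument (showing $\md(Lv) = \tilde A v - \symbolForBigC Lv + \symbolForBigC v$, deducing $v(T)=0$, and running the exponential-weight estimate backwards with $\gamma<0$). This second condition has no counterpart in your sketch; in the Lions projection framework its role is played by the density of the test space, which is why your route can dispense with it, at the cost of having to prove uniqueness separately and of carrying the bookkeeping of the operators $T_t$, $\hat S(t)$, $\hat D(t)$ that the pullback of the time-derivative term generates. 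The core energy estimates are the same in both treatments: the exponential shift $e^{-\gamma t}$, the transport identity \eqref{eq:assDifferentiabilityOfl} converting $\int_0^T\langle L\dot u, u\rangle$ into a nonnegative boundary term plus the $m(t;\cdot,\cdot)$ correction, and the absorption of the $\symbolForLittlec$, $\dot L$ and $\symbolForBigC L$ contributions by \eqref{eq:assBoundednessOfdotL} and the boundedness of $\symbolForLittlec(t;\cdot,\cdot)$ --- exactly the obstacle you correctly identify. One small caution: your parenthetical that $\dot u$ is recovered because ``$L(t)$ is boundedly invertible'' overstates what \eqref{eq:assLvstarInL2Vstar} gives (injectivity and a two-sided norm bound, not surjectivity onto $L^2_{V^*}$); the correct mechanism, which you do also cite, is Lemma~\ref{lem:temamType} together with \eqref{eq:assLvInW1} and \eqref{eq:assProductRule}.
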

Now, suppose we now know that $f \in L^2_H$ and $u_0 \in V_0$. Can we expect the same regularity on the solution $u$ as holds in the case of stationary spaces? It turns out that we can obtain $\dot u \in L^2_H$ under some additional assumptions, including some on the differentiability of $A(t)$. 

Before we list these assumptions, let us just note that if we define bilinear forms $l(t;\cdot,\cdot)\colon \Vmt \times \Vt \to \mathbb{R}$ and $a(t;\cdot,\cdot)\colon \Vt \times \Vt \to \mathbb{R}$ to satisfy
\begin{align*}
 l(t;g,w) &:= \langle L(t) g,w \rangle_{\Vmt,\Vt}\\
a(t;v,w) &:=\langle A(t) v,w  \rangle_{\Vmt, \Vt},
\end{align*}
then the problem \eqref{eq:operatorEquation} is in fact equivalent to
\begin{equation}\label{eq:defnWeakSolutionSecond}
\begin{aligned}
l(t; \dot u(t), v) + a(t;u(t),v) + \symbolForLittlec(t;u(t),v) &= \langle f(t), v \rangle_{\Vmt, \Vt}\\
u(0) &= u_0
\end{aligned}
\end{equation}
for all $v \in V(t)$ and for almost every $t \in [0,T]$ (the null set is independent of $v$). Similarly, if $f \in L^2_H$ and $\dot u \in L^2_H$, then \eqref{eq:operatorEquation} is equivalent to
\begin{equation}\label{eq:defnWeakSolutionWithoutL}
\begin{aligned}
l(t; \dot u(t), v) + a(t;u(t),v) + \symbolForLittlec(t;u(t),v) &= (f(t), v)_{H(t)}\\
u(0) &= u_0
\end{aligned}\tag{$\textbf{P'}$}
\end{equation}
for all $v \in V(t)$ and for almost every $t \in [0,T]$, where now $l(t;\cdot,\cdot)\colon \Ht \times \Ht \to \mathbb{R}$ is $l(t;\cdot,\cdot) = (L(t)\cdot, \cdot)_{\Ht}$. It is this form of the problem that turns out to be more convenient to work with to show regularity. 
To see the equivalence, for one side, we may take the duality pairing of \eqref{eq:operatorEquation} with $v = \xi\phi_{(\cdot)}v_0$ where $v_0 \in V_0$ and $\xi \in \mathcal{D}(0,T)$; then an argument involving the separability of $V_0$ gives \eqref{eq:defnWeakSolutionWithoutL}. The converse follows by the density of simple measurable functions in $L^2_V$ (see Lemma \ref{cor:convergenceSimpleMeasurableFunctions}).

Since $V_0$ is separable, we may find a basis $\{\chi_j^0\}$, by which we mean that for all $N \in \mathbb{N}$, the set $\{\chi_j^0\}_{j=1}^N$ is linearly independent and finite linear combinations of $\chi_j^0$ are dense in $V_0$. 
\begin{ass}\label{ass:basisFunctionsForRegularity}
We assume that there exists a basis $\{\chi_j^0\}_{j \in \mathbb{N}}$ of $V_0$ and a sequence $\{u_{0N}\}_{N \in \mathbb{N}}$ with $u_{0N} \in \text{span}\{\chi_1^0, ..., \chi_N^0\}$ for each $N$, such that
\begin{align}
u_{0N} &\to u_0 \qquad \text{in $V_0$}\label{eq:assu0NConvergesInV0}\tag{B1}\\
\norm{u_{0N}}{H_0} &\leq C_1\norm{u_0}{H_0}\label{eq:assu0NBoundedInH0}\tag{B2}\\
\norm{u_{0N}}{V_0} &\leq C_2\norm{u_0}{V_0}\label{eq:assInitialDataApproximationBoundedInV}\tag{B3}
\end{align}
{where $C_1$ and $C_2$ do not depend on $N$ or $u_0$.} 
\end{ass}
\begin{remark}\label{rem:onBasisFunctions}
{Such a basis as required by the last assumption always exists if $V_0 \subset H_0$ is compact thanks to Hilbert--Schmidt theory. In fact, in such a case we can find a basis $\chi_j^0$ which is orthonormal in $H_0$ and orthogonal in $V_0$.}
\end{remark}
Let $AC([0,T])$ be the space of absolutely continuous functions from $[0,T]$ into $\mathbb{R}$.
\begin{definition}\label{defn:galerkinSpace}
{We define the space
\[\tilde{C}^1_V = \{u \mid u(t) = \sum_{j=1}^m \alpha_j(t)\chi_j^t, \text{ $m \in \mathbb{N}$, $\alpha_j \in AC([0,T])$ and $\alpha_j' \in L^2(0,T)$}\}.\]
Note that $\tilde C^1_V \subset C^0_V$ and $\tilde C^1_V \subset W(V,V)$.}
\end{definition}
\begin{remark}\label{rem:materialDerivativeOfTildeC1VFunctions}
{Note that if $u \in \tilde C^1_V$ with $u(t) = \sum_{j=1}^m \alpha_j(t)\chi_j^t$ as in the definition then $\dot u(t) = \sum_{j=1}^m \alpha'_j(t)\chi_j^t.$ We skip the proof which is straightforward: just use the definition of the weak material derivative and perform some manipulations. We could not have calculated the strong material derivative of $u$ via the 
formula \eqref{eq:defnStrongMaterialDerivative} because the pullback \[\phi_{-(\cdot)}u(\cdot) = \sum_{j=1}^n \alpha_j(\cdot)\chi_j^0\]
is not necessarily in $C^1([0,T];V_0)$ since the $\alpha_j$ are not necessarily $C^1$.}
\end{remark}
\begin{asss}[Further assumptions on $a(t;\cdot,\cdot)$]\label{asss:aAndasWithL}
Suppose that $a(t;\cdot,\cdot)$ has the form
\[a(t;\cdot,\cdot) = a_s(t;\cdot,\cdot) + a_n(t;\cdot,\cdot)\]
where 
\begin{align*}
&a_s(t;\cdot, \cdot)\colon \Vt \times \Vt \to \mathbb{R}\\
&a_n(t;\cdot, \cdot)\colon\Vt \times \Ht \to \mathbb{R}
\end{align*}
are bilinear forms (we allow the possibility $a_n \equiv 0$) such that {the map} 
\begin{equation}\label{eq:assAbsCtyOfas}
t \mapsto a_s(t;y(t),y(t)) \text{ {is absolutely continuous on $[0,T]$ for all $y \in \tilde C^1_V$.}}\tag{A3}
\end{equation}
Suppose also that there exist positive constants $C_1$, $C_2$ and $C_3$ independent of $t$ such that for almost every $t \in [0,T]$, 
\begin{align}
|a_n(t;v,w)| &\leq C_1\norm{v}{\Vt}\norm{w}{\Ht}&&\forall v \in \Vt, w \in \Ht
\tag{A4}\label{eq:assBoundednessOfan}\\
|a_s(t;v,w)| &\leq C_2\norm{v}{\Vt}\norm{w}{\Vt}&&\forall v, w \in \Vt\tag{A5}\label{eq:assBoundednessOfas}\\
a_s(t;v,v) &\geq 0\tag{A6}&&\forall v \in \Vt \label{eq:assPositivityOfas}\\
\frac{d}{dt}a_s(t;y(t),y(t)) &= 2a_s(t;y(t),\dot y(t)) + r(t;y(t))&&\forall y \in \tilde{C}^1_V, \tag{A7}\label{eq:assDifferentiabilityOfas}\\
\intertext{where the $\frac{d}{dt}$ here is the classical derivative, and $r(t;\cdot)\colon\Vt \to \mathbb{R}$ satisfies}
|r(t;v)| &\leq C_3\norm{v}{\Vt}^2&&\forall v \in \Vt\tag{A8}\label{eq:assBoundednessOfF}.
\end{align}
\end{asss}
\begin{remark}\label{rem:asssOna}
Note that we require only one part of the bilinear form $a(t;\cdot,\cdot)$ to be differentiable; however, any potentially non-differentiable terms require the stronger boundedness condition \eqref{eq:assBoundednessOfan}.
\end{remark}
As alluded to above, it is permissible to take $a_n \equiv 0$ so that $a \equiv a_s$. In this case, we are in the same situation as in Assumptions \ref{asss:aWithoutL} except with the addition of \eqref{eq:assAbsCtyOfas}, \eqref{eq:assPositivityOfas}, \eqref{eq:assDifferentiabilityOfas}, and \eqref{eq:assBoundednessOfF}.

We have the following regularity result proved in  \S \ref{sec:galerkinApproximations}.

\begin{theorem}[Regularity of the solution to \eqref{eq:operatorEquation}]\label{thm:existenceWithL}
{Under the assumptions in Assumptions \ref{asss:onL}, \ref{asss:aWithoutL}, \ref{ass:basisFunctionsForRegularity}, and \ref{asss:aAndasWithL}, if $f \in L^2_H$ and $u_0 \in V_0$, the unique solution $u$ of \eqref{eq:operatorEquation} from Theorem \ref{thm:existenceWithoutL} satisfies the regularity $u \in W(V,H)$ and the estimate}
\begin{equation*}
\norm{u}{W(V,H)} \leq C\left(\norm{u_0}{V_0}+\norm{f}{L^2_H}\right).
\end{equation*}
\end{theorem}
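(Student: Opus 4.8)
The plan is to prove this via the adapted abstract Galerkin method of \S\ref{sec:galerkinApproximations}, working with the problem in the form \eqref{eq:defnWeakSolutionWithoutL}. Using the basis $\{\chi_j^0\}$ of Assumption \ref{ass:basisFunctionsForRegularity}, I would push it forward by setting $\chi_j^t := \phi_t\chi_j^0$ and seek Galerkin approximants
\[
u_N(t) = \sum_{j=1}^N \alpha_j^N(t)\chi_j^t \in \tilde C^1_V
\]
solving the finite-dimensional system obtained by testing \eqref{eq:defnWeakSolutionWithoutL} against each $\chi_i^t$, with $u_N(0) = u_{0N}$. By Remark \ref{rem:materialDerivativeOfTildeC1VFunctions} the material derivative is $\dot u_N(t) = \sum_j (\alpha_j^N)'(t)\chi_j^t$, so the system reads $\sum_j (\alpha_j^N)'(t)\,l(t;\chi_j^t,\chi_i^t) + \sum_j \alpha_j^N(t)\,(a+\symbolForLittlec)(t;\chi_j^t,\chi_i^t) = (f(t),\chi_i^t)_{\Ht}$. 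The matrix $[l(t;\chi_j^t,\chi_i^t)] = [(L(t)\chi_j^t,\chi_i^t)_{\Ht}]$ is symmetric and, by \eqref{eq:assCoercivityOfl}, positive definite, hence invertible, so this is a linear Carath\'eodory ODE system with $L^2(0,T)$ coefficients; it admits a solution with $\alpha_j^N \in AC([0,T])$ and $(\alpha_j^N)'\in L^2(0,T)$, confirming $u_N \in \tilde C^1_V$.

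Next I would establish two a priori estimates. The first is the energy estimate already used for Theorem \ref{thm:existenceWithoutL}: testing with $u_N$ itself, applying Lemma \ref{lem:derivativeOfL} to rewrite $l(t;\dot u_N,u_N)$ as $\tfrac12\tfrac{d}{dt}(L(t)u_N,u_N)_{\Ht}-\tfrac12 m(t;u_N,u_N)$, using the coercivity \eqref{eq:assCoercivityOfa} of $a$, the lower bound \eqref{eq:assCoercivityOfl}, the boundedness of $\symbolForLittlec$ (Lemma \ref{lem:bilinearFormc}) and of $m(t;\cdot,\cdot)$ (which, by \eqref{eq:assBoundednessOfdotL}, \eqref{eq:assBoundednessOfl} and Lemma \ref{lem:bilinearFormc}, obeys the sharp bound $|m(t;v,w)|\le C\norm{v}{\Ht}\norm{w}{\Vt}$), together with Young's inequality and Gr\"onwall, gives
\[
\norm{u_N}{L^2_V} + \sup_{t\in[0,T]}\norm{u_N(t)}{\Ht} \le C\big(\norm{u_{0N}}{\Hs}+\norm{f}{L^2_{V^*}}\big) \le C\big(\norm{u_0}{\Hs}+\norm{f}{L^2_H}\big),
\]
where the last step uses \eqref{eq:assu0NBoundedInH0} and the continuous embedding $L^2_H\subset L^2_{V^*}$.

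The crucial estimate is the second, obtained by testing with $\dot u_N$, which is permissible since $\dot u_N$ lies in the span of the $\chi_j^t$. This gives $l(t;\dot u_N,\dot u_N) + a(t;u_N,\dot u_N) + \symbolForLittlec(t;u_N,\dot u_N) = (f(t),\dot u_N)_{\Ht}$, where \eqref{eq:assCoercivityOfl} yields $l(t;\dot u_N,\dot u_N)\ge C_4\norm{\dot u_N}{\Ht}^2$. Splitting $a = a_s + a_n$, I would use the differentiation rule \eqref{eq:assDifferentiabilityOfas} to write $a_s(t;u_N,\dot u_N) = \tfrac12\tfrac{d}{dt}a_s(t;u_N,u_N) - \tfrac12 r(t;u_N)$, while the terms $a_n(t;u_N,\dot u_N)$, $\symbolForLittlec(t;u_N,\dot u_N)$ and $(f,\dot u_N)_{\Ht}$ are controlled via \eqref{eq:assBoundednessOfan}, Lemma \ref{lem:bilinearFormc} and Cauchy--Schwarz, and absorbed into the coercive term by Young's inequality at the cost of terms in $\norm{u_N}{\Vt}^2$, $\norm{u_N}{\Ht}^2$ and $\norm{f}{\Ht}^2$. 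Integrating over $[0,s]$, discarding the nonnegative boundary term $a_s(s;u_N(s),u_N(s))$ by \eqref{eq:assPositivityOfas}, bounding $a_s(0;u_{0N},u_{0N})\le C\norm{u_{0N}}{\Vs}^2 \le C\norm{u_0}{\Vs}^2$ via \eqref{eq:assBoundednessOfas} and \eqref{eq:assInitialDataApproximationBoundedInV}, and controlling $\int_0^s|r(t;u_N)|$ by \eqref{eq:assBoundednessOfF} together with the first estimate, leaves the uniform bound $\norm{\dot u_N}{L^2_H} \le C(\norm{u_0}{\Vs}+\norm{f}{L^2_H})$.

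Finally, the uniform bounds yield (for a subsequence) $u_N \weaklyto u^\star$ in $L^2_V$ and $\dot u_N \weaklyto \chi$ in $L^2_H$. Passing to the limit in the weak material derivative identity tested against $\eta\in\mathcal{D}_V(0,T)$ --- using that $\dot u_N\weaklyto\chi$ also weakly in $L^2_{V^*}$ and that $u_N\weaklyto u^\star$ in $L^2_H$ --- identifies $\chi$ as the weak material derivative of $u^\star$, so $u^\star\in W(V,H)$; one then checks $u^\star$ solves \eqref{eq:operatorEquation} (including $u^\star(0)=u_0$ via \eqref{eq:assu0NConvergesInV0} and Lemma \ref{lem:initialConditionsWellDefined}), whence $u^\star = u$ by the uniqueness in Theorem \ref{thm:existenceWithoutL}. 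The claimed estimate follows from the two a priori bounds by weak lower semicontinuity of the norms. I expect the main obstacle to be precisely this second a priori estimate: ensuring that the non-symmetric part $a_n$, the form $\symbolForLittlec$, and the correction $r$ can genuinely be absorbed into $C_4\norm{\dot u_N}{\Ht}^2$ and the already-controlled $L^2_V$ norm. This is exactly where the splitting hypothesis \eqref{eq:assBoundednessOfan} and the structural assumptions \eqref{eq:assDifferentiabilityOfas}--\eqref{eq:assBoundednessOfF} on $a_s$ are indispensable, and where the extra regularity $u_0\in V_0$ enters through \eqref{eq:assInitialDataApproximationBoundedInV}.
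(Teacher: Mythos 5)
Your proposal is correct and follows essentially the same route as the paper: the pushed-forward Galerkin basis $\chi_j^t=\phi_t\chi_j^0$ with the transport property, solvability of the resulting Carath\'eodory ODE system, the two \emph{a priori} estimates obtained by testing with $u_N$ and with $\dot u_N$ (using \eqref{eq:assDifferentiabilityOfas}--\eqref{eq:assBoundednessOfF} and \eqref{eq:assPositivityOfas} exactly as the paper does), weak convergence, and identification of the weak limit of $\dot u_N$ as $\dot u$. The only cosmetic difference is that you invoke uniqueness from Theorem \ref{thm:existenceWithoutL} to identify the limit with $u$, whereas the paper passes to the limit in the Galerkin equation against a dense family of test functions and verifies the initial condition by an integration-by-parts argument; both are standard and equivalent here.
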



\section{Proof of well-posedness}\label{sec:proofs}
We use a generalisation of the Lax--Milgram theorem sometimes called the Banach--Ne\v{c}as--Babu\v{s}ka theorem  \cite[\S 2.1.3]{guermond} to establish existence.
\begin{theorem}[Banach--Ne\v{c}as--Babu\v{s}ka]\label{thm:bnb}
Let $X$ be a Banach space and let $Y$ be a reflexive Banach space. Suppose $d(\cdot,\cdot)\colon X \times Y \to \mathbb{R}$ is a bounded bilinear form and $f \in Y^*$. Then there is a unique solution $x \in X$ to the problem
\[d(x,y) = \langle f, y \rangle_{Y^*,Y}\qquad\text{for all $y \in Y$}\]
satisfying
\begin{equation}\label{eq:wellPosednessBNB}
\norm{x}{X} \leq C\norm{f}{Y^*}
\end{equation}
if and only if
\begin{enumerate}
\item There exists $\alpha > 0$ such that
\[\inf_{x \in X}\sup_{y \in Y}\frac{d(x,y)}{\norm{x}{X}\norm{y}{Y}} \geq \alpha\tag{``inf-sup condition"}.\]
\item For arbitrary $y \in Y$, if \[ d(x,y) = 0\text{ holds for all $x \in X$},\]
then $y=0.$
\end{enumerate}
Moreover, the estimate \eqref{eq:wellPosednessBNB} holds with the constant $C=\frac{1}{\alpha}.$
\end{theorem}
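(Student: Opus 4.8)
The plan is to recast the variational problem as a linear operator equation and then read off well-posedness from the two conditions by combining a lower bound on the operator with a Hahn--Banach/reflexivity argument for surjectivity. I would define the bounded linear operator $B\colon X \to Y^*$ by $\langle Bx, y\rangle_{Y^*,Y} := d(x,y)$; boundedness of $B$ (with $\norm{B}{} \le \norm{d}{}$) is immediate from the boundedness of the bilinear form. Under this identification the problem ``find $x \in X$ with $d(x,y) = \langle f, y\rangle_{Y^*,Y}$ for all $y \in Y$'' is exactly the equation $Bx = f$ in $Y^*$, so unique solvability for every $f$ together with the bound \eqref{eq:wellPosednessBNB} is equivalent to $B$ being a bijection onto $Y^*$ whose inverse satisfies $\norm{B^{-1}}{} \le C$.

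First I would establish the sufficiency direction. The inf-sup condition is nothing but the statement that $B$ is bounded below: since $\norm{Bx}{Y^*} = \sup_{y \in Y\setminus\{0\}} \langle Bx, y\rangle_{Y^*,Y}/\norm{y}{Y} = \sup_{y} d(x,y)/\norm{y}{Y}$, condition 1 reads $\norm{Bx}{Y^*} \ge \alpha\norm{x}{X}$ for all $x \in X$. This bound immediately yields injectivity of $B$, the a priori estimate $\norm{x}{X} \le \alpha^{-1}\norm{Bx}{Y^*}$ (hence the constant $C = 1/\alpha$ once surjectivity is known), and---because $X$ is complete and $B$ is bounded below---that the range $R(B)$ is a closed subspace of $Y^*$.

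The crux is surjectivity, and this is where condition 2 and the reflexivity of $Y$ enter. Suppose for contradiction that $R(B) \ne Y^*$. Since $R(B)$ is closed and proper, Hahn--Banach furnishes a nonzero $\Phi \in (Y^*)^*$ annihilating $R(B)$. Reflexivity of $Y$ identifies $(Y^*)^* \cong Y$, so $\Phi$ corresponds to some $y_\Phi \in Y$ with $y_\Phi \ne 0$, and the annihilation reads $\langle Bx, y_\Phi\rangle_{Y^*,Y} = d(x, y_\Phi) = 0$ for all $x \in X$. By condition 2 this forces $y_\Phi = 0$, a contradiction; hence $R(B) = Y^*$ and $B$ is onto. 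Combined with the previous step, $B$ is a bijection with $\norm{x}{X} \le \alpha^{-1}\norm{f}{Y^*}$, giving existence, uniqueness, and the estimate with $C = 1/\alpha$. The only genuinely delicate point is precisely this argument: one must use reflexivity to identify the annihilating functional on the bidual with an element of $Y$ itself, since it is $Y$ (not $Y^{**}$) on which condition 2 is posed.

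For the converse (necessity), I would invoke the open mapping theorem: well-posedness for every $f$ means $B$ is a continuous bijection between Banach spaces, so $B^{-1}$ is continuous, and the stability estimate gives $\norm{B^{-1}}{} \le C$. Then $\norm{x}{X} = \norm{B^{-1}Bx}{X} \le C\norm{Bx}{Y^*}$ rearranges to $\norm{Bx}{Y^*} \ge C^{-1}\norm{x}{X}$, which is the inf-sup condition with $\alpha = 1/C$; and surjectivity makes condition 2 automatic, since $\langle Bx, y\rangle_{Y^*,Y} = 0$ for all $x$ then forces $\langle g, y\rangle_{Y^*,Y} = 0$ for all $g \in Y^*$, whence $y = 0$.
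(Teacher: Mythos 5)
Your proof is correct and complete: the reduction to the operator equation $Bx=f$, the equivalence of the inf-sup condition with $B$ being bounded below (hence injective with closed range), the surjectivity argument via Hahn--Banach and the reflexive identification $(Y^*)^*\cong Y$ feeding into condition 2, and the converse via the open mapping theorem are all sound, and you correctly obtain the constant $C=1/\alpha$. Note that the paper does not actually prove this theorem --- it only cites it from Ern and Guermond --- and your argument is essentially the standard proof given in that reference, so there is nothing in the paper to diverge from.
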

Recall the equation \eqref{eq:operatorEquation}:
\begin{equation*}
\begin{aligned}
L\dot u + A u + \symbolForBigC u &= f\qquad\text{in $L^2_{V^*}$}\\
u(0) &= u_0
\end{aligned}
\end{equation*}
where $f \in L^2_{V^*}$ and $u_0 \in H_0.$  By considering a suitable initial value problem on a fixed domain
we know that there is a function $y \in \mathcal W(V_0,V_0^*)$ with $y(0) = u_0$ and
\[\norm{y}{\mathcal W(V_0,V_0^*)} \leq C\norm{u_0}{H_0}.\]
Then the function
$\tilde y(\cdot)  = \phi_{(\cdot)}y(\cdot)$
is such that $\tilde y \in W(V,V^{*})$ with $\tilde y(0) = u_0$. So then we can transform \eqref{eq:operatorEquation} into a PDE with zero initial condition if we set $w = u - \tilde y$:
\begin{equation}\label{eq:necasPDEzero}
\begin{aligned}
L\dot {w} + A w + \symbolForBigC w &= \tilde f\\
w(0) &= 0
\end{aligned}\tag{$\textbf{P}_{\textbf{0}}$}
\end{equation}
where $\tilde f :=  f-L\md \tilde y - A\tilde y - \symbolForBigC\tilde y \in L^2_{V^*}$. It is clear that well-posedness of \eqref{eq:necasPDEzero} translates into well-posedness of \eqref{eq:operatorEquation}. The idea is to apply Theorem \ref{thm:bnb} to the problem \eqref{eq:necasPDEzero} with $X=W_0(V,V^*)$, $Y=L^2_V$, and the bilinear form
\[d(u,v) = \langle L\dot u, v \rangle_{L^2_{V^*}, L^2_V} + \langle Au, v \rangle _{L^2_{V^*}, L^2_V}+ \langle \symbolForBigC u, v \rangle_{L^2_{V^*}, L^2_V}.\]
\begin{remark}
The space $W_0(V,V^*)$ is indeed a Hilbert space because by Lemma \ref{lem:initialConditionsWellDefined}, it is a closed linear subspace of $W(V,V^*)$.
\end{remark}
The arguments in the next two lemmas follow \S 4 in \cite{reusken}. See also \cite[\S 6.1.2]{guermond}.

\begin{lemma}\label{lem:necaseInfSup}For all $w \in W_0(V,V^*)$, there exists a function $v_w \in L^2_V$ such that
\begin{align*}
\langle L\dot w, v_w\rangle_{L^2_{V^*}, L^2_V} +\langle Aw,v_w\rangle_{L^2_{V^*}, L^2_V}+\langle \symbolForBigC w&,v_w\rangle_{L^2_{V^*}, L^2_V}\geq C\norm{w}{W(V,V^*)}\norm{v_w}{L^2_V}.
\end{align*}
\end{lemma}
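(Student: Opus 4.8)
The plan is to construct an explicit test function of the shape $v_w = e^{-\beta t}w + \rho\,v_2$, with $\beta > 0$ and $\rho > 0$ fixed constants to be chosen; the first summand will produce coercivity in the $L^2_V$-norm and the second will recover control of $\norm{\md w}{L^2_{V^*}}$, so that together they bound the full $W(V,V^*)$-norm of $w$ from below while $\norm{v_w}{L^2_V}$ stays controlled by $\norm{w}{W(V,V^*)}$. Throughout I write $d(\cdot,\cdot)$ for the bilinear form introduced above, noting $\langle \symbolForBigC w, v\rangle_{L^2_{V^*},L^2_V} = \int_0^T \symbolForLittlec(t;w(t),v(t))$ and $\langle Aw, v\rangle_{L^2_{V^*},L^2_V} = \int_0^T \langle A(t)w(t),v(t)\rangle_{\Vmt,\Vt}$.

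First I would test against $e^{-\beta t}w$. Applying the product rule of Lemma \ref{lem:derivativeOfL} to the absolutely continuous map $t \mapsto e^{-\beta t}(L(t)w(t),w(t))_{\Ht}$ and integrating over $[0,T]$, the contribution at $t = 0$ vanishes since $w \in W_0(V,V^*)$, and the contribution at $t = T$ is nonnegative by the coercivity \eqref{eq:assCoercivityOfl} of $L$. This gives
\[
d(w, e^{-\beta t}w) \geq \int_0^T e^{-\beta t}\Big(\tfrac{\beta}{2}(L(t)w,w)_{\Ht} - \tfrac12\langle M(t)w,w\rangle_{\Vmt,\Vt} + \langle A(t)w,w\rangle_{\Vmt,\Vt} + \symbolForLittlec(t;w,w)\Big).
\]
Invoking \eqref{eq:assCoercivityOfl}, the G\aa rding inequality \eqref{eq:assCoercivityOfa}, the uniform boundedness of $M$ (from \eqref{eq:assBoundednessOfdotL}, \eqref{eq:assBoundednessOfl} and Lemma \ref{lem:bilinearFormc}) and of $\symbolForLittlec$, together with Young's inequality, the integrand is bounded pointwise below by an expression of the form $(\tfrac{\beta}{2}C_4 - C^{*})\norm{w}{\Ht}^2 + c\,\norm{w}{\Vt}^2$ in which $C^{*}$ and $c > 0$ do not depend on $\beta$. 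Fixing $\beta$ finite but large enough that $\tfrac{\beta}{2}C_4 \geq C^{*}$ discards the $\norm{w}{\Ht}^2$-term and leaves $d(w, e^{-\beta t}w) \geq \kappa\,\norm{w}{L^2_V}^2$ for some $\kappa > 0$.

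For the second summand I would take $v_2 \in L^2_V$ to be the Riesz representative of $L\md w$, viewed as an element of $L^2_{V^*} \equiv (L^2_V)^{*}$; then $\langle L\md w, v_2\rangle_{L^2_{V^*},L^2_V} = \norm{L\md w}{L^2_{V^*}}^2$ and $\norm{v_2}{L^2_V} = \norm{L\md w}{L^2_{V^*}}$, so \eqref{eq:assLvstarInL2Vstar} gives $\langle L\md w, v_2\rangle \geq C_1^2\norm{\md w}{L^2_{V^*}}^2$ and $\norm{v_2}{L^2_V} \leq C_2\norm{\md w}{L^2_{V^*}}$. The remaining terms $\int_0^T \langle A(t)w,v_2\rangle_{\Vmt,\Vt}$ and $\int_0^T \symbolForLittlec(t;w,v_2)$ are estimated by $C\norm{w}{L^2_V}\norm{\md w}{L^2_{V^*}}$ using \eqref{eq:assBoundednessOfa}, the boundedness of $\symbolForLittlec$, and the (uniform) continuous embedding $\Vt \hookrightarrow \Ht$. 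Writing $d(w,v_w) = d(w,e^{-\beta t}w) + \rho\,d(w,v_2)$, absorbing these cross terms by Young's inequality and then fixing $\rho$ small enough yields $d(w,v_w) \geq C\big(\norm{w}{L^2_V}^2 + \norm{\md w}{L^2_{V^*}}^2\big) = C\norm{w}{W(V,V^*)}^2$; combined with $\norm{v_w}{L^2_V} \leq C'\norm{w}{W(V,V^*)}$ this is exactly the asserted inequality.

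The crux is the absence of genuine coercivity of $A(t)$: the G\aa rding estimate \eqref{eq:assCoercivityOfa} leaves a negative multiple of $\norm{w}{\Ht}^2$ that the plain choice $v_w = w + \rho v_2$ could not absorb. The exponential weight $e^{-\beta t}$ is precisely what repairs this, for differentiating $e^{-\beta t}(L(t)w,w)_{\Ht}$ creates the extra positive term $\tfrac{\beta}{2}\int_0^T e^{-\beta t}(L(t)w,w)_{\Ht}$, which the coercivity \eqref{eq:assCoercivityOfl} of $L$ converts into a freely adjustable positive multiple of $\int_0^T e^{-\beta t}\norm{w}{\Ht}^2$; since the competing constant $C^{*}$ is independent of $\beta$, one finite choice of $\beta$ absorbs all lower-order $\Ht$-terms at once. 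The only bookkeeping point is to check that $v_2$ genuinely lies in $L^2_V$ — equivalently, that the pointwise Riesz representatives in $\Vt$ of $L(t)\md w(t) \in \Vmt$ form a measurable (hence $L^2_V$) family — which follows by the same argument used in the identification of $(L^2_X)^{*}$ with $L^2_{X^*}$.
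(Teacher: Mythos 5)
Your construction $v_w = e^{-\beta t}w + \rho\,v_2$ is, up to rescaling by $1/\rho$, exactly the paper's test function $v_w = z + \mu w_\gamma$ (with $z$ the Riesz representative of $L\dot w$ in $L^2_V$ and $\mu = 1/\rho$ large), and your two estimates coincide with the paper's estimates (E1) and (E2), including the choice of a large exponential weight to absorb the G\aa rding defect via the coercivity of $L$. The proof is correct and takes essentially the same approach as the paper.
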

\begin{proof}This proof requires two estimates.
\paragraph{First estimate}Let $w \in W_0(V,V^*)$ and set $w_\gamma(t) = e^{-\gamma t}w(t).$ Note that $w_{\gamma} \in W_0(V,V^*)$ too with
$\dot w_{\gamma}(t) = e^{-\gamma t}\dot w(t) - \gamma w_{\gamma}(t),$
so
\[\langle L(t)\dot w_\gamma(t), w(t)\rangle_{\Vmt, \Vt} = \langle L(t)\dot w(t)- \gamma L(t)w(t), w_{\gamma}(t)\rangle_{\Vmt,\Vt}.\]
Rearranging, integrating, and then using \eqref{eq:assDifferentiabilityOfl}:
\begin{align}
 \langle L\dot w, w_\gamma \rangle_{L^2_{V^*}, L^2_V} &=\frac{1}{2}\left(\langle L\dot w, w_\gamma \rangle_{L^2_{V^*}, L^2_V}  + \langle L\dot w_\gamma, w\rangle_{L^2_{V^*}, L^2_V}\right) + \frac{1}{2}\gamma (Lw, w_\gamma)_{L^2_H} \label{eq:halfIdentity}\\
\nonumber &=\frac{1}{2}\int_0^T\frac{d}{dt}(L(t)w(t),w_\gamma(t))_{\Ht} - \frac{1}{2}\langle Mw,w_\gamma\rangle_{L^2_{V^*}, L^2_V}\\
\nonumber &\quad + \frac{1}{2}\gamma (Lw, w_\gamma)_{L^2_H}\\
\nonumber &\geq - \frac{1}{2}\langle Mw,w_\gamma\rangle_{L^2_{V^*}, L^2_V} + \frac{1}{2}\gamma (Lw, w_\gamma)_{L^2_H}
\end{align}
as $(L(T)w(T),w_{\gamma}(T))_{H(T)} \geq 0$ by \eqref{eq:assCoercivityOfl}. Hence
\begin{align}
\nonumber &\langle L\dot w, w_\gamma \rangle_{L^2_{V^*}, L^2_V} + \langle Aw, w_\gamma \rangle_{L^2_{V^*}, L^2_V} + \langle \symbolForBigC w, w_\gamma \rangle_{L^2_{V^*}, L^2_V}
\\
\nonumber &\geq \langle Aw, w_\gamma \rangle_{L^2_{V^*}, L^2_V} + \langle \symbolForBigC w,w_{\gamma} \rangle_{L^2_{V^*}, L^2_V} -\frac{1}{2} \langle Mw,w_{\gamma}\rangle_{L^2_{V^*}, L^2_V}+ \frac{1}{2}\gamma (Lw, w_\gamma)_{L^2_H}\\
\nonumber &\geq \int_0^T e^{-\gamma t}\left(C_1\norm{w(t)}{\Vt}^2 - C_2\norm{w(t)}{\Ht}^2\right)-\frac{1}{2}\int_0^T C_3e^{{-\gamma t}}\norm{w(t)}{H(t)}^2\\
&\quad + \frac{\gamma C_4 }{2}\int_0^T e^{-\gamma t}\norm{w(t)}{H(t)}^2\tag{by the coercivity of $A(t)$ and $L(t)$ and the boundedness of $\symbolForBigC (t)$ and $M(t)$}\\
\nonumber &= C_1\int_0^T e^{-\gamma t}\norm{w(t)}{\Vt}^2 + \frac{\gamma C_4 -C_3 - 2C_2}{2}\int_0^T e^{-\gamma t}\norm{w(t)}{H(t)}^2\\
&\geq e^{-\gamma T}C_1\norm{w}{L^2_V}^2\label{eq:result1}\tag{E1}
\end{align}
with the final inequality holding if we choose $\gamma$ such that $\gamma C_4 > C_3 + 2C_2$. Note that we used Young's inequality in conjunction with the boundedness of $M(t)$ above.
\paragraph{Second estimate}Now, by the Riesz representation theorem, there exists $z \in L^2_V$ such that
\begin{equation}\label{eq:rrt1}
\langle L\dot w, v \rangle_{L^2_{V^*}, L^2_V} = (z,v)_{L^2_V}\qquad\text{for all $v \in L^2_V$}
\end{equation}
with $\norm{z}{L^2_V} = \norm{L\dot w}{L^2_{V^*}}$. We have
\begin{align}
\nonumber \langle L\dot w + Aw + \symbolForBigC w, z \rangle_{L^2_{V^*}, L^2_V} &\geq \norm{z}{L^2_V}^2 - C_5\int_0^T \norm{w(t)}{\Vt}\norm{z(t)}{\Vt}\tag{by \eqref{eq:rrt1} and the bounds on $A$ and $\symbolForBigC$}\\
\nonumber &\geq C_6\norm{z}{L^2_V}^2 - C_7\norm{w}{L^2_V}^2\tag{using Young's inequality}\\
&= C_6\lVert{L\dot w}\rVert_{L^2_{V^*}}^2 - C_7\norm{w}{L^2_V}^2\label{eq:result2}\tag{E2}.
\end{align}
\paragraph{Combining the estimates}Estimate \eqref{eq:result2} gives us control of $L\dot w$ at the expense of $w$, but the latter is controlled by estimate \eqref{eq:result1}. So let us put $v_w := z+\mu w_\gamma$ where $\mu > 0$ is a constant to be determined and consider:
\begin{align*}
\langle L\dot w, v_w \rangle_{L^2_{V^*}, L^2_V} &+ \langle Aw, v_w\rangle_{L^2_{V^*}, L^2_V} + \langle \symbolForBigC w, v_w \rangle_{L^2_{V^*}, L^2_V}\\
&\geq C_6\norm{L\dot w}{L^2_{V^*}}^2 - C_7\norm{w}{L^2_V}^2 + \mu e^{-\gamma T}C_1\norm{w}{L^2_V}^2\\
&\geq C_6\norm{L\dot w}{L^2_{V^*}}^2 + C_8\norm{w}{L^2_V}^2\tag{if $\mu$ is large enough}\\
&\geq C_9\norm{w}{W(V,V^*)}^2
\end{align*}
thanks to \eqref{eq:assLvstarInL2Vstar}. Finally, because
\begin{align*}
\norm{v_w}{L^2_V} &\leq \norm{z}{L^2_V} + \mu\norm{w_\gamma}{L^2_V}\\
&= \norm{L\dot w}{L^2_{V^*}} + \mu\left(\int_0^T |e^{-\gamma t}|^2\norm{w(t)}{V(t)}^2\right)^{\frac 1 2}\\
&\leq \norm{L\dot w}{L^2_{V^*}} + \mu\norm{w}{L^2_V}\\
&\leq C_{10}\norm{w}{W(V,V^*)}\tag{by \eqref{eq:assLvstarInL2Vstar}}
\end{align*}
we end up with
\begin{align*}
\langle L\dot w, v_w \rangle_{L^2_{V^*}, L^2_V} + \langle Aw, v_w\rangle_{L^2_{V^*}, L^2_V}+ \langle \symbolForBigC w, v_w\rangle_{L^2_{V^*}, L^2_V}\geq C\norm{w}{W(V,V^*)}\norm{v_w}{L^2_V}.
\end{align*}
\end{proof}

\begin{lemma}\label{lem:necasCondition}
If given arbitrary ${v} \in L^2_V$, the equality
\begin{equation}\label{eq:rts}
\langle L\dot w, v \rangle_{L^2_{V^*}, L^2_V} + \langle Aw, v\rangle_{L^2_{V^*}, L^2_V} + \langle \symbolForBigC w, v \rangle_{L^2_{V^*}, L^2_V} = 0
\end{equation}
holds for all $w \in W_0(V,V^*)$, then necessarily ${v}=0.$
\end{lemma}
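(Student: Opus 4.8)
The plan is to verify this injectivity condition by the classical route: show that such a $v$ must solve the \emph{backward} (adjoint) problem with zero terminal data, and then annihilate it with an energy estimate run backwards in time. The structural facts I shall lean on throughout are the symmetry \eqref{eq:assSymmetricityOfL} of $L(t)$, the evolving space equivalence \eqref{eq:assLvInW1}, and the transport-type identity \eqref{eq:assDifferentiabilityOfl} of Lemma \ref{lem:derivativeOfL}.

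The first task is to upgrade the regularity of $v$. Testing \eqref{eq:rts} only against $\eta \in \mathcal{D}_V(0,T) \subset W_0(V,V^*)$ and using the symmetry \eqref{eq:assSymmetricityOfL} to move $L$ onto $v$, the leading term $\langle L\dot\eta, v\rangle_{\Vmt,\Vt}$ becomes $(\dot\eta, Lv)_{\Ht}$ (note $\dot\eta(t) \in V(t) \subset H(t)$ while $L(t)v(t) \in V(t)$), so that \eqref{eq:rts} reads
\[\int_0^T (Lv(t), \dot\eta(t))_{\Ht} = \int_0^T \langle A(t)\eta(t), v(t)\rangle_{\Vmt,\Vt} + \int_0^T \symbolForLittlec(t;\eta(t),v(t)).\]
Writing the right-hand side as $\int_0^T \langle A^*(t)v(t) + \symbolForBigC(t)v(t), \eta(t)\rangle_{\Vmt,\Vt}$, where $A^*(t)$ is the adjoint of $A(t)$ defined by $\langle A^*(t)v,\eta\rangle_{\Vmt,\Vt} = \langle A(t)\eta,v\rangle_{\Vmt,\Vt}$ (which lands in $L^2_{V^*}$ by \eqref{eq:assBoundednessOfa} and the measurability hypothesis on $A$) and using the symmetry of $\symbolForBigC(t)$, I compare with the definition of the weak material derivative and read off that $Lv$ possesses one, namely
\[\md(Lv) = A^* v + \symbolForBigC v - \symbolForBigC(Lv) \in L^2_{V^*}.\]
Hence $Lv \in W(V,V^*)$, and the equivalence \eqref{eq:assLvInW1} then forces $v \in W(V,V^*)$. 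I expect this to be the main obstacle, as it is where the interplay of the weak material derivative, the symmetry of $L$, and assumption \eqref{eq:assLvInW1} must be handled precisely.

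With $v \in W(V,V^*)$ secured, I integrate the identity \eqref{eq:assDifferentiabilityOfl} over $[0,T]$ for arbitrary $w \in W_0(V,V^*)$ (so $w(0)=0$) to re-express $\int_0^T \langle L\dot w, v\rangle_{\Vmt,\Vt}$ and thereby recast \eqref{eq:rts} as
\[(L(T)v(T), w(T))_{H(T)} - \int_0^T \langle L\dot v + Mv - A^* v - \symbolForBigC v, w\rangle_{\Vmt,\Vt} = 0.\]
Taking $w \in \mathcal{D}_V(0,T)$ first shows the integrand vanishes, i.e. $v$ solves the backward equation $L\dot v + Mv = A^* v + \symbolForBigC v$ in $L^2_{V^*}$; consequently the integral term vanishes for \emph{every} $w \in W_0(V,V^*)$, leaving $(L(T)v(T), w(T))_{H(T)} = 0$ for all such $w$. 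Since for any $\psi \in V(T)$ the function $w(t) = \tfrac{t}{T}\,\phi_t\phi_{-T}\psi$ lies in $W_0(V,V^*)$ with $w(T) = \psi$, the traces $\{w(T)\}$ are dense in $H(T)$; hence $L(T)v(T) = 0$, and the coercivity \eqref{eq:assCoercivityOfl} yields $v(T) = 0$.

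Finally I run the energy estimate backwards. Setting $w = v$ in \eqref{eq:assDifferentiabilityOfl} and substituting the backward equation gives
\[\frac{d}{dt}(L(t)v(t),v(t))_{\Ht} = 2\langle A(t)v(t),v(t)\rangle_{\Vmt,\Vt} + 2\symbolForLittlec(t;v(t),v(t)) - \langle M(t)v(t),v(t)\rangle_{\Vmt,\Vt}.\]
Integrating from $s$ to $T$ and using $v(T)=0$, the coercivity \eqref{eq:assCoercivityOfl} of $L$ on the left, the coercivity \eqref{eq:assCoercivityOfa} of $A$ (whose $\norm{v}{\Vt}^2$ contribution now carries the favourable sign and may be discarded), and the bounds on $\symbolForLittlec$ and on $M$ via \eqref{eq:assBoundednessOfdotL} together with Young's inequality, I arrive at
\[\norm{v(s)}{H(s)}^2 \leq C\int_s^T \norm{v(t)}{H(t)}^2\,\mathrm{d}t.\]
A backward Gronwall argument then forces $v(s)=0$ in $H(s)$ for every $s$, so $v=0$ in $L^2_V$, which is the desired conclusion.
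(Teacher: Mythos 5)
Your proof is correct and follows essentially the same route as the paper: you upgrade $v$ to $W(V,V^*)$ by identifying $\md(Lv)$ via the symmetry of $L$ and the adjoint $A^*$ (the paper's $\tilde A$), deduce $v(T)=0$ from the transport identity for $(L(t)v(t),w(t))_{H(t)}$, and close with a backward energy estimate. The only cosmetic difference is the final step, where you integrate the pointwise backward identity from $s$ to $T$ and invoke backward Gronwall, whereas the paper tests with the exponential weight $e^{-\gamma t}v$ ($\gamma<0$) and picks $\gamma$ so the $H$-norm terms cancel outright --- two interchangeable versions of the same estimate.
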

\begin{proof}
Define the operator $\tilde{A}(t)\colon \Vt \to \Vmt$ by 
\[\langle \tilde{A}(t)v(t), \eta (t) \rangle_{V^*(t),V(t)} := \langle A(t) \eta(t), v(t) \rangle_{V^*(t),V(t)}\]
and identify $(\tilde{A}v)(t) = \tilde{A}(t)v(t).$ Take $w=\eta \in \mathcal{D}_V$ in \eqref{eq:rts} and rearrange to give
\begin{align*}
(L\dot \eta,v)_{L^2_H}=(Lv, \dot \eta)_{L^2_H} &= -\langle \tilde{A}{v}, \eta \rangle_{L^2_{V^*}, L^2_V} - \langle \symbolForBigC v,\eta\rangle_{L^2_{V^*}, L^2_V}\\
&= -\langle \tilde{A}{v}-\symbolForBigC Lv + \symbolForBigC v, \eta\rangle_{L^2_{V^*}, L^2_V} - \langle \symbolForBigC Lv,\eta\rangle_{L^2_{V^*}, L^2_V}
\end{align*}
where we used the symmetric property of $L(t)$. (We could not simply have used $A$ in place of $\tilde{A}$ above because $a(t;\cdot,\cdot)$ may not be symmetric.) This tells us that $\md({Lv}) =\tilde{A}{v}-\symbolForBigC Lv+\symbolForBigC v \in L^2_{V^*}$, and so $Lv \in W(V,V^*)$ (we already have $Lv \in L^2_V$ from \eqref{eq:assLvInL2V}). So
\begin{align}
\nonumber \langle \md ( Lv), \eta \rangle_{L^2_{V^*}, L^2_V} &= \langle  (\tilde{A} -\symbolForBigC L+\symbolForBigC )v, \eta \rangle_{L^2_{V^*}, L^2_V}\qquad\text{$\forall \eta \in \mathcal D_V.$}\\
\intertext{By the density of $\mathcal{D}((0,T);V_0) \subset L^2(0,T;V_0)$, we have the density of $\mathcal D_V \subset L^2_V,$ which implies}
\langle \md (Lv), w\rangle_{L^2_{V^*}, L^2_V}&=  \langle (\tilde{A}-\symbolForBigC L+\symbolForBigC )v, w \rangle_{L^2_{V^*}, L^2_V}\qquad\text{$\forall w \in L^2_V.$}\label{eq:rts2}
\end{align}
If in particular $w \in W_0(V,V^*)$, then we can use \eqref{eq:rts} on the right hand side of \eqref{eq:rts2} to give
\begin{equation}\label{eq:weexist}
\langle L\dot w, {v} \rangle_{L^2_{V^*}, L^2_V} + \langle \md( Lv), w \rangle_{L^2_{V^*}, L^2_V} + \langle \symbolForBigC w, Lv\rangle_{L^2_{V^*}, L^2_V} = 0\quad\text{$\forall w \in W_0(V,V^*)$.}
\end{equation}
Using $(L(t)w(t), v(t))_{\Ht} = (L(t)v(t), w(t))_{\Ht}$, we have
\begin{align*}
\frac{d}{dt}(L(t)w(t), v(t))_{\Ht}&= \langle \md(L(t)v(t)), w(t) \rangle_{\Vmt, \Vt} \\
&\quad + \langle \dot w(t), L(t)v(t) \rangle_{\Vmt, \Vt}+ \langle \symbolForBigC (t)w(t), L(t)v(t)  \rangle
\end{align*}
to which an application of \eqref{eq:assSymmetricityOfL} shows us that \eqref{eq:weexist} is exactly
\[\int_0^T \frac{d}{dt}(L(t)w(t), v(t))_{\Ht} = (L(T)w(T),{v}(T))_{H(T)}=0\]
for all $w \in W_0(V,V^*)$. Thus we have shown that ${v}(T) = 0$.

Let $0>\gamma \in \mathbb R$ and  set $w(t) = v_\gamma(t) = e^{-\gamma t}{v}(t)$ in \eqref{eq:rts2} to obtain
\begin{align}\label{eq:proofExi1}
0&=\langle \md (Lv), v_\gamma \rangle_{L^2_{V^*}, L^2_V} - \langle (\tilde{A}-\symbolForBigC L+\symbolForBigC )v, v_\gamma \rangle_{L^2_{V^*}, L^2_V}.
\end{align}
We showed that $Lv \in W(V,V^*)$ earlier; by \eqref{eq:assLvInW1}, $v \in W(V,V^*)$ too, and so we can apply \eqref{eq:assProductRule} to the  first term on the right hand side of \eqref{eq:proofExi1}:
\begin{align*}
\langle \md (Lv), v_\gamma \rangle_{L^2_{V^*}, L^2_V} &=\langle \dot{L}v, v_\gamma \rangle_{L^2_{V^*}, L^2_V} + \langle L\dot v, v_\gamma \rangle_{L^2_{V^*}, L^2_V}\\
&=\langle \dot{L}v, v_\gamma \rangle_{L^2_{V^*}, L^2_V}+\frac{1}{2}\left(\langle L\dot v, v_\gamma \rangle_{L^2_{V^*}, L^2_V} + \langle L\dot v_\gamma, v\rangle_{L^2_{V^*}, L^2_V}\right)\\
&\quad+ \frac{1}{2}\gamma(Lv, v_\gamma)_{L^2_H}\tag{follows like the equation \eqref{eq:halfIdentity}}\\
&\leq \frac{1}{2}\langle \dot{L}v, v_\gamma \rangle_{L^2_{V^*}, L^2_V}-\frac{1}{2}\langle \symbolForBigC v_\gamma, Lv \rangle_{L^2_{V^*}, L^2_V}+ \frac{1}{2}\gamma(Lv, v_\gamma)_{L^2_H}\tag{since $ v(T) = 0$ and by coercivity of $L(0)$}.
\end{align*}
Note that \eqref{eq:assBoundednessOfdotL} together with Young's inequality implies 
 \begin{align*}
 \langle \dot{L}(t)v(t), v(t) \rangle_{V^*(t),V(t)} 
 & \leq C_5 \| v(t) \|_{H(t)} \| v(t) \|_{V(t)}
  \leq C_{\epsilon} \| v(t) \|_{H(t)}^2 + \epsilon \| v(t) \|^{2}_{V(t)}. 
  \end{align*}
Using this and the previous inequality, \eqref{eq:proofExi1} becomes
\begin{align*}
0&\leq \langle \dot{L}v, v_\gamma \rangle_{L^2_{V^*}, L^2_V}+\langle \symbolForBigC v_\gamma, Lv \rangle_{L^2_{V^*}, L^2_V}+ \gamma(Lv, v_\gamma)_{L^2_H}- 2\langle (\tilde{A}+\symbolForBigC )v, v_\gamma \rangle_{L^2_{V^*}, L^2_V}\\
&=\int_0^T e^{-\gamma t}\langle \dot{L}(t)v(t), v(t)\rangle_{\Vmt, \Vt}+\int_0^T e^{-\gamma t}\symbolForLittlec(t; L(t)v(t), v(t))\\
&\;\;+ \int_0^T\gamma e^{-\gamma t}(L(t)v(t), v(t))_{\Ht}- 2\int_0^T e^{-\gamma t}\langle (\tilde{A}(t)+\symbolForBigC (t))v(t), v (t) \rangle_{\Vmt, \Vt}\\
&\leq (C_1+ \gamma C_2)\int_0^T e^{-\gamma t}\norm{v(t)}{\Ht}^2 - 2C_a\int_0^T e^{-\gamma t}\norm{{v}(t)}{\Vt}^2
\end{align*}
using {the bound on $\symbolForLittlec(t;\cdot,\cdot)$} and the assumptions \eqref{eq:assBoundednessOfl}, \eqref{eq:assCoercivityOfl} and \eqref{eq:assCoercivityOfa} (coercivity). If we pick $\gamma = -\frac{C_1}{C_2}$, it follows that ${v}=0$ in $L^2_V.$
\end{proof}
\begin{proof}[Proof of Theorem \ref{thm:existenceWithoutL}]
The inf-sup condition (which is an easy consequence of Lemma \ref{lem:necaseInfSup}) in combination with Lemma \ref{lem:necasCondition} furnishes the requirements of the Banach--Ne\v{c}as--Babu\v{s}ka theorem (Theorem \ref{thm:bnb}) thus yielding the existence and uniqueness of a solution $w \in W_0(V,V^*)$ to
\begin{equation*}
\begin{aligned}
L\dot {w} + A w + \symbolForBigC w &= \tilde f\\
w(0) &= 0
\end{aligned}
\end{equation*}
where $\tilde f \in L^2_{V^*}$ is arbitrary. Hence, we have well-posedness of \eqref{eq:necasPDEzero} with the estimate
\[\norm{w}{W(V,V^*)} \leq C\lVert {\tilde f}\rVert_{L^2_{V^*}}.\]
From this well-posedness result, we also obtain unique solvability of \eqref{eq:operatorEquation} by setting $u = w+\tilde{y}$ (note that $w$ depends on $\tilde{y}$), with the solution $u \in W(V,V^*)$ satisfying 
\[\norm{u}{W(V,V^*)} \leq C\left(\norm{f}{L^2_{V^*}} + \norm{u_0}{H_0}\right).\]
\end{proof}

\section{Galerkin approximation}\label{sec:galerkinApproximations}
In this section we abstract the pushed-forward Galerkin method used in \cite{dziuk_elliott} for the advection-diffusion equation on an evolving hypersurface.
\subsection{Finite-dimensional spaces}
{Let $\{\chi^0_j\}_{j \in \mathbb{N}}$ be the basis of $V_0$ described in Assumption \ref{ass:basisFunctionsForRegularity}.} We can turn this into a basis of $V(t)$ with the help of the continuous map $\phi_t.$ 
\begin{lemma}
With $\chi_j^t := \phi_t(\chi_j^0)$ for each $j \in \mathbb{N}$, the set $\{\chi_j^t\}_{j \in \mathbb{N}}$ is a countable basis of $\Vt$.
\end{lemma}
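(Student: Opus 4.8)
The plan is to verify the two defining properties of a basis stated just before Assumption \ref{ass:basisFunctionsForRegularity}: that for every $N$ the finite set $\{\chi_j^t\}_{j=1}^N$ is linearly independent, and that finite linear combinations of the $\chi_j^t$ are dense in $\Vt$. The single structural fact driving everything is that, by Assumptions \ref{asss:compatibilityOfEvolvingHilbertTriple}, the pushforward $\phi_t\colon \Vs \to \Vt$ is a linear homeomorphism with continuous inverse $\phi_{-t}$. Thus $\{\chi_j^t\}$ is nothing more than the image under an isomorphism of the basis $\{\chi_j^0\}$ of $\Vs$, and the basis property is transported along. Countability is immediate, since the index set $\mathbb{N}$ is unchanged.

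For linear independence, I would suppose $\sum_{j=1}^N \alpha_j \chi_j^t = 0$ in $\Vt$. Since $\chi_j^t = \phi_t \chi_j^0$ and $\phi_t$ is linear, this reads $\phi_t\bigl(\sum_{j=1}^N \alpha_j \chi_j^0\bigr) = 0$; applying the injective map $\phi_{-t}$ gives $\sum_{j=1}^N \alpha_j \chi_j^0 = 0$, whence $\alpha_1 = \dots = \alpha_N = 0$ by the linear independence of $\{\chi_j^0\}_{j=1}^N$ guaranteed by Assumption \ref{ass:basisFunctionsForRegularity}.

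For density, take an arbitrary $v \in \Vt$ and set $v_0 := \phi_{-t} v \in \Vs$. By the density half of the basis property for $\{\chi_j^0\}$, there is a sequence $w_n = \sum_{j=1}^{m_n} \alpha_j^n \chi_j^0$ with $w_n \to v_0$ in $\Vs$. Applying the continuous map $\phi_t$ and using $\phi_t v_0 = v$, I obtain $\phi_t w_n = \sum_{j=1}^{m_n} \alpha_j^n \chi_j^t \to v$ in $\Vt$; the uniform bound $\norm{\phi_t w}{\Vt} \le C_V \norm{w}{\Vs}$ makes this convergence transparent. Hence finite linear combinations of the $\chi_j^t$ are dense in $\Vt$.

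I expect no genuine obstacle: this is a transport-of-structure statement, and the only ingredient beyond linearity is the bicontinuity of $\phi_t$, already recorded in the compatibility assumptions. The one point worth care is to use the paper's notion of \emph{basis} (finite initial segments linearly independent, finite linear combinations dense) rather than, say, a Schauder or orthonormal basis, since $\phi_t$ need not preserve orthogonality or normalisation; working with this weaker notion is exactly what makes the argument go through unchanged under a mere linear homeomorphism.
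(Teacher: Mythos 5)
Your proof is correct and is exactly the argument the paper intends: the lemma is stated without proof immediately after the remark that the basis of $V_0$ can be transported ``with the help of the continuous map $\phi_t$'', and your verification of linear independence via the injectivity of $\phi_{-t}$ and of density via the continuity of $\phi_t$ fills in precisely those details, using the paper's (correct, non-orthonormal) notion of basis.
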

The next result is an extremely useful property of the basis functions following from Remark \ref{zeromatderiv} (see \cite{dziuk_elliott} for the finite element analogue).
\begin{lemma}[Transport property of basis functions]
The basis $\{\chi_j^t\}_{j \in \mathbb{N}}$ satisfies the transport property
\[\dot\chi_j^t = 0.\]
\end{lemma}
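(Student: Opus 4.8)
The plan is to recognise this as an immediate consequence of Remark \ref{zeromatderiv}, with the only genuine task being to verify that $\chi_j^{(\cdot)}$ lies in the space $C^1_V$ on which the strong material derivative is defined. First I would observe that, by the very definition $\chi_j^t := \phi_t \chi_j^0$ together with $\phi_{-t}\phi_t = \mathrm{Id}$, the pullback satisfies
\[\phi_{-t}\chi_j^t = \phi_{-t}\phi_t\chi_j^0 = \chi_j^0 \qquad \text{for every } t \in [0,T].\]
Since $\chi_j^0 \in V_0$ is a fixed, time-independent element, the map $t \mapsto \phi_{-t}\chi_j^t$ is constant and hence lies in $C^1([0,T];V_0)$ (indeed in $C^\infty$). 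By the definition of $C^1_V$ this shows $\chi_j^{(\cdot)} \in C^1_V$, so the strong material derivative $\dot\chi_j^t$ is well-defined and we are entitled to compute it via \eqref{eq:defnStrongMaterialDerivative}.

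Next I would simply apply the defining formula for the strong material derivative:
\[\dot\chi_j^t = \phi_t\left(\frac{d}{dt}\big(\phi_{-t}\chi_j^t\big)\right) = \phi_t\left(\frac{d}{dt}\chi_j^0\right) = \phi_t(0) = 0,\]
where the middle equality uses the constancy of $\phi_{-t}\chi_j^t$ established above, and the last equality uses that the derivative of a constant $V_0$-valued function vanishes together with the linearity of $\phi_t$. This is precisely the first assertion of Remark \ref{zeromatderiv} applied with $X = V$ and $\eta = \chi_j^0$, so one could alternatively invoke that remark directly rather than unwinding the definition by hand.

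There is no substantial obstacle here: the whole content is the elementary observation that pushing forward a time-independent element of $V_0$ produces a curve whose pullback is constant, and the strong material derivative measures exactly the rate of change of that pullback. The single point requiring (trivial) care is confirming membership in $C^1_V$ before differentiating, which is why I would dispatch the regularity check first. If desired, one may finally remark that, since every strong material derivative is also a weak material derivative, the identity $\dot\chi_j^t = 0$ holds in the weak sense as well, which is the form in which the transport property is exploited in the subsequent Galerkin arguments.
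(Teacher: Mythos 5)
Your proof is correct and follows the paper exactly: the paper dispatches this lemma by citing Remark \ref{zeromatderiv}, and your argument is precisely the content of that remark unwound for $\eta = \chi_j^0$, namely that the pullback $\phi_{-t}\chi_j^t = \chi_j^0$ is constant so formula \eqref{eq:defnStrongMaterialDerivative} yields $\dot\chi_j^t = \phi_t(0) = 0$. The preliminary check that $\chi_j^{(\cdot)} \in C^1_V$ is a nice touch of care but involves no new idea beyond what the paper intends.
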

We now construct the approximation spaces in which the discrete solutions lie.
\begin{definition}[Approximation spaces]
For each $N \in \mathbb{N}$ and each $t \in [0,T]$, define \[V_N(t) = \textnormal{span}\{\chi_1^t, ..., \chi_N^t\} \subset \Vt.\] 
Clearly $V_N(t) \subset V_{N+1}(t)$ and $\bigcup_{j \in \mathbb{N}}V_j(t)$ is dense in $V(t)$. 
Define
\[L^2_{V_N} = \{ u \in L^2_V \mid u(t) = \sum_{j=1}^N \alpha_j(t) \chi_j^t \textnormal{ where $\alpha_j\colon [0,T] \to \mathbb{R}$}\}.\]
Similarly, $L^2_{V_N} \subset L^2_{V_{N+1}}$, and we shall state a density result below which follows from the density of the embedding 
$\bigcup_{j \in \mathbb{N}} L^2(0,T;V_j(0)) \subset L^2(0,T;V_0)$ and from the fact that $L^2(0,T;V_j(0)) \subset L^2(0,T;V_{j+1}(0)).$
\end{definition}
\begin{lemma}The space $\bigcup_{j \in \mathbb{N}}L^2_{V_j}$ is dense in $L^2_V$.
\end{lemma}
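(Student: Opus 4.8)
The plan is to pull everything back to the fixed Bochner space $L^2(0,T;V_0)$ via the isomorphism $T$ of Lemma~\ref{lem:pullbackIsInL2X}, where $Tu = \phi_{(\cdot)}u(\cdot)$ and $T^{-1}v = \phi_{-(\cdot)}v(\cdot)$, and to exploit that, by Lemma~\ref{lem:equivalenceOfNorms}, $T$ is a linear homeomorphism and hence maps dense subsets to dense subsets. Writing $V_N(0) := \mathrm{span}\{\chi_1^0,\dots,\chi_N^0\}$, the first step is to identify the image of each finite-dimensional space under $T^{-1}$.

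I would show that $T^{-1}(L^2_{V_N}) = L^2(0,T;V_N(0))$. Indeed, since $\chi_j^t = \phi_t \chi_j^0$, any $u \in L^2_{V_N}$ with $u(t) = \sum_{j=1}^N \alpha_j(t)\chi_j^t$ satisfies $\phi_{-t}u(t) = \sum_{j=1}^N \alpha_j(t)\chi_j^0 \in V_N(0)$, so that $T^{-1}u = \phi_{-(\cdot)}u(\cdot) \in L^2(0,T;V_N(0))$; conversely $\phi_t$ maps $V_N(0)$ bijectively onto $V_N(t)$, so $T$ sends $L^2(0,T;V_N(0))$ back into $L^2_{V_N}$. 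Taking the union over $N$ yields $T^{-1}\!\left(\bigcup_{j}L^2_{V_j}\right) = \bigcup_j L^2(0,T;V_j(0))$. Because $T$ is a homeomorphism, the claim then reduces to showing that $\bigcup_j L^2(0,T;V_j(0))$ is dense in $L^2(0,T;V_0)$.

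For this last reduction I would use the density of simple functions in $L^2(0,T;V_0)$: given $v \in L^2(0,T;V_0)$ and $\eps > 0$, pick a simple function $s = \sum_{i=1}^k \mathbf 1_{E_i} w_i$ (with $w_i \in V_0$) within $\eps$ of $v$. Since finite linear combinations of the $\chi_j^0$ are dense in $V_0$ by Assumption~\ref{ass:basisFunctionsForRegularity}, each $w_i$ is approximated in $V_0$ by some $\tilde w_i \in V_{N_i}(0)$; setting $N := \max_i N_i$ and using the nesting $V_{N_i}(0) \subset V_N(0)$ we get $\tilde w_i \in V_N(0)$ for all $i$, so that $\tilde s := \sum_i \mathbf 1_{E_i}\tilde w_i \in L^2(0,T;V_N(0))$ approximates $s$, and hence $v$, in $L^2(0,T;V_0)$.

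The argument is essentially routine; the only point requiring care — and the one I would flag as the main obstacle — is the use of the nesting $V_j(0) \subset V_{j+1}(0)$ to collapse the finitely many distinct approximation indices $N_i$ into a single $N$, so that the whole simple function lands in one member $L^2(0,T;V_N(0))$ of the directed union rather than merely in a sum of such spaces.
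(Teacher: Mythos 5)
Your proof is correct and is exactly the route the paper intends: the paper disposes of this lemma in one line by invoking the isomorphism with $L^2(0,T;V_0)$ and the density of $\bigcup_j L^2(0,T;V_j(0))$ therein, together with the nesting $L^2(0,T;V_j(0)) \subset L^2(0,T;V_{j+1}(0))$. Your identification $T^{-1}(L^2_{V_N}) = L^2(0,T;V_N(0))$ and the simple-function argument merely fill in the standard details the paper leaves unstated, so there is nothing to add.
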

\begin{remark}If $u \in L^2_{V_N}$ and $u(t) = \sum_{j=1}^N \alpha_j(t)\chi_j^t$
has coefficients $\alpha_j \in C^1([0,T]),$ then $u \in C^1_V$ with strong material derivative
$\dot u(t) = \sum_{j=1}^N \alpha_j'(t)\chi_j^t,$
and $\dot u \in L^2_{V_N}.$ Our Galerkin ansatz (see below) has coefficients in a slightly less convenient space.
\end{remark}
\paragraph{Galerkin ansatz.} Later on, we construct finite-dimensional solutions which have the form
\[u_N(t) = \sum_{j=1}^N u_j^N(t)\chi_j^t \in V_N(t)\]
where the $u_j^N\colon [0,T]\to\mathbb{R}$ turn out to be absolutely continuous coefficient functions with $\dot u_j^N \in L^2(0,T)$, i.e., $u_N \in \tilde C^1_V$. It holds that $u_N \in L^2_V$ 
and by definition, $u_N \in L^2_{V_N}.$ {By Remark \ref{rem:materialDerivativeOfTildeC1VFunctions}, the material derivative of $u_N$ is $\dot u_N \in L^2_{V_N}$ with $\dot u_N(t) = \sum_{j=1}^N \dot u_j^N(t)\chi_j^{t}.$}

\begin{definition}[Projection operators]\label{defn:projectionOperator}
For each $t \in [0,T]$, define a projection operator $P_N^t\colon H(t) \to V_N(t)$ by the formula
\begin{equation*}
(P_N^tu-u,v_N)_{\Ht} = 0\quad\text{for all $v_N \in V_N(t)$}.
\end{equation*}
\end{definition}
It follows that $(P_N^t)^2 = P_N^t,$
\begin{align*}
\norm{P_N^tu}{H(t)} &\leq \norm{u}{H(t)}
\end{align*}
and
\begin{align}
P_N^t u \to u \quad \text{in $\Ht$}\label{eq:projectionOperatorConvergesInH}
\end{align}
for all $u \in \Ht.$ 
\begin{remark}
We could have relaxed the definition of the spaces $V_N(t)$ and instead have asked for a family of finite-dimensional 
spaces $\{V_N(0)\}_{N \in \mathbb{N}}$ such that for all $N \in \mathbb{N}$,
\begin{itemize}
\item[(i)]$V_N(0) \subset V_0$
\item[(ii)]$\text{dim}(V_N) = N$
\item[(iii)]$\bigcup_{i \in \mathbb{N}}V_i(0)$ is dense in $V_0$
\item[(iv)]For every $v \in V_0,$ there exists a sequence $\{v_N\}_{N \in \mathbb{N}}$ with $v_N \in V_N(0)$ such that $\norm{v_N - v}{V_0} \to 0$.
\end{itemize}
Furthermore, we can define the spaces $V_N(t) := \phi_t(V_N(0)).$ The continuity of the map $\phi_t$ 
implies that these spaces share the same properties (with respect to $V(t)$) as the $V_N(0)$ given above; in particular the density result 
\[\bigcup_{N \in \mathbb{N}}V_N(t) \quad \text{is dense in $V(t)$}\]
is true. Note that the basis of $V_N(t)$ does not necessarily have to be a subset of the basis of $V_{N+1}(t)$; this is the situation in finite element analysis, for example, so this relaxation can be useful 
for the purposes of numerical analysis. See \cite{dziuk_elliott}, \cite{dziuk_elliott-L2}.
\end{remark}

\subsection{Galerkin approximation of \eqref{eq:operatorEquation}}
We now proceed with the regularity result. With $f \in L^2_H$ and $u_0 \in V_0$, the finite-dimensional approximation is to find a unique $u_N \in L^2_{V_N}$ with $\dot u_N \in L^2_{V_N}$ satisfying
\begin{equation}\label{eq:galerkinODESystemWithL}
\begin{aligned}
l(t;\dot u_N(t), \chi_j^t) + a(t;u_N(t),\chi_j^t) + \symbolForLittlec(t;u_N(t), \chi_j^t) &=  (f(t), \chi_j^t)_{H(t)}\\
u_N(0) &= u_{0N}
\end{aligned}
\end{equation}
for all $j\in \{1, ..., N\}$ and for almost every $t \in [0,T]$ (cf. the equation \eqref{eq:defnWeakSolutionWithoutL}). Here, $u_{0N}$ is as in Assumption \ref{ass:basisFunctionsForRegularity}.
%
%
\begin{theorem}[Well-posedness of solutions to the finite-dimensional problem]\label{thm:eauFD}
Under the hypotheses of Theorem \ref{thm:existenceWithL}, there exists a unique $u_N \in L^2_{V_N}$ with $\dot u_N \in L^2_{V_N}$ satisfying the problem \eqref{eq:galerkinODESystemWithL}. With $u_N(t) = \sum_{i=1}^N u_i^N(t)\chi_i^t$, the coefficient functions satisfy
\begin{align*}
&u_i^N \in AC([0,T])\\
&\dot u_i^N \in L^2(0,T).
\end{align*}
for all $i \in \{1, ..., N\}.$
\end{theorem}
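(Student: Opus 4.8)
My plan is to reduce the Galerkin problem \eqref{eq:galerkinODESystemWithL} to a linear system of ordinary differential equations for the coefficient vector $\mathbf u^N(t) = (u_1^N(t), \dots, u_N^N(t))^\top$ and then to invoke Carath\'eodory theory. Substituting the ansatz $u_N(t) = \sum_{i=1}^N u_i^N(t)\chi_i^t$ and using Remark \ref{rem:materialDerivativeOfTildeC1VFunctions}, so that $\dot u_N(t) = \sum_{i=1}^N \dot u_i^N(t)\chi_i^t$ (which relies on the transport property $\dot\chi_i^t = 0$), the system \eqref{eq:galerkinODESystemWithL} reduces to
\[
\mathbf L(t)\dot{\mathbf u}^N(t) + \big(\mathbf A(t) + \mathbf C(t)\big)\mathbf u^N(t) = \mathbf f(t), \qquad \mathbf u^N(0) = \mathbf u_0^N,
\]
where $\mathbf L_{ji}(t) = (L(t)\chi_i^t,\chi_j^t)_{\Ht}$, $\mathbf A_{ji}(t) = a(t;\chi_i^t,\chi_j^t)$, $\mathbf C_{ji}(t) = \symbolForLittlec(t;\chi_i^t,\chi_j^t)$, $\mathbf f_j(t) = (f(t),\chi_j^t)_{\Ht}$, and $\mathbf u_0^N$ is the coefficient vector of $u_{0N}$.

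First I would check that the data of this system are well behaved. Since $\chi_i = \phi_{(\cdot)}\chi_i^0$ belongs to $L^2_H\cap L^2_V$, the mapping property $L|_{L^2_H}\colon L^2_H\to L^2_H$ gives $L\chi_i\in L^2_H$; hence each entry of $\mathbf L$, $\mathbf A$ and $\mathbf C$ is measurable (using the measurability of $t\mapsto(u(t),v(t))_{\Ht}$ for $u,v\in L^2_H$ established in Theorem \ref{thm:ips}, the measurability hypothesis on $a(t;\cdot,\cdot)$ in Assumptions \ref{asss:aWithoutL}, and Lemma \ref{lem:bilinearFormc}), and bounded on $[0,T]$ by \eqref{eq:assBoundednessOfl}, \eqref{eq:assBoundednessOfa} and Lemma \ref{lem:bilinearFormc} together with the continuity (hence boundedness) of $t\mapsto\norm{\chi_i^t}{\Ht}$ and $t\mapsto\norm{\chi_i^t}{\Vt}$; Cauchy--Schwarz gives $\mathbf f\in L^2(0,T;\mathbb R^N)$. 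The crucial point is the uniform invertibility of the matrix $\mathbf L(t)$, which is symmetric by \eqref{eq:assSymmetricityOfL}: writing $v = \sum_i\xi_i\chi_i^t = \phi_t v_0$ with $v_0 = \sum_i\xi_i\chi_i^0$, coercivity \eqref{eq:assCoercivityOfl} and the norm equivalence yield
\[
\xi^\top\mathbf L(t)\xi = (L(t)v,v)_{\Ht} \ge C_4\norm{v}{\Ht}^2 \ge C_4\tilde C_H^{-2}\norm{v_0}{\Hs}^2 \ge C_4\tilde C_H^{-2}\mu_N\,|\xi|^2,
\]
where $\mu_N>0$ is the smallest eigenvalue of the constant positive-definite $\Hs$-Gram matrix of $\{\chi_1^0,\dots,\chi_N^0\}$. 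Thus $\mathbf L(t)^{-1}$ exists and is bounded uniformly in $t$, and being a rational function of the measurable entries of $\mathbf L(t)$ it is itself measurable.

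With this the system rewrites as $\dot{\mathbf u}^N = \mathbf B(t)\mathbf u^N + \mathbf g(t)$ with $\mathbf B := -\mathbf L^{-1}(\mathbf A+\mathbf C)\in L^\infty(0,T;\mathbb R^{N\times N})$ and $\mathbf g := \mathbf L^{-1}\mathbf f\in L^2(0,T;\mathbb R^N)$. Carath\'eodory's existence and uniqueness theorem for linear systems then produces a unique absolutely continuous solution $\mathbf u^N$ on $[0,T]$; substituting back into $\dot{\mathbf u}^N = \mathbf B\mathbf u^N+\mathbf g$ and using $\mathbf u^N\in C^0([0,T];\mathbb R^N)\subset L^\infty$ shows $\dot{\mathbf u}^N\in L^2(0,T;\mathbb R^N)$. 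Hence each $u_i^N\in AC([0,T])$ with $\dot u_i^N\in L^2(0,T)$, so $u_N\in\tilde C^1_V$ and, by Remark \ref{rem:materialDerivativeOfTildeC1VFunctions}, $\dot u_N\in L^2_{V_N}$ with the asserted form; by construction $u_N$ solves \eqref{eq:galerkinODESystemWithL}, giving existence.

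For uniqueness I would argue by an energy estimate rather than through the coefficients. If $u_N,\tilde u_N\in L^2_{V_N}$ with material derivatives in $L^2_{V_N}$ both solve \eqref{eq:galerkinODESystemWithL}, then $w:=u_N-\tilde u_N\in W(V,V^*)$ satisfies the homogeneous equation; testing against $w(t)\in V_N(t)$ (a legitimate finite, $t$-dependent combination of the equations for the $\chi_j^t$) and applying Lemma \ref{lem:derivativeOfL} gives, for almost every $t$,
\[
\tfrac12\tfrac{d}{dt}\big(L(t)w(t),w(t)\big)_{\Ht} + a(t;w(t),w(t)) + \symbolForLittlec(t;w(t),w(t)) = \tfrac12\langle M(t)w(t),w(t)\rangle_{\Vmt,\Vt}.
\]
Integrating from $0$ with $w(0)=0$ and using coercivity \eqref{eq:assCoercivityOfl} and \eqref{eq:assCoercivityOfa}, the bound \eqref{eq:assBoundednessOfdotL} on $\dot L(t)$ and the boundedness of $\symbolForLittlec(t;\cdot,\cdot)$ to absorb the $M(t)$-term into the coercive terms (exactly as in the estimate \eqref{eq:result1}) leaves $\norm{w(s)}{H(s)}^2\le C\int_0^s\norm{w(t)}{\Ht}^2\,\mathrm dt$, so Gr\"onwall's lemma forces $w=0$. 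The hard part will be the uniform lower bound on $\mathbf L(t)$ displayed above: without it the inverted system need not have integrable coefficients and Carath\'eodory theory would fail on the whole of $[0,T]$; once it is secured the remaining steps are routine.
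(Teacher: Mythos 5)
Your proposal follows essentially the same route as the paper: substitute the ansatz, reduce \eqref{eq:galerkinODESystemWithL} to the linear system $\mathbf L(t)\dot{\mathbf u}^N + (\mathbf A(t)+\mathbf C(t))\mathbf u^N = \mathbf f(t)$, invert $\mathbf L(t)$, and apply Carath\'eodory theory for ODEs with measurable coefficients; your Gram-matrix coercivity argument is precisely the ``elementary considerations'' the paper leaves implicit for $\mathbf L(\cdot)^{-1}\in L^\infty(0,T;\mathbb R^{N\times N})$. The only genuine divergence is that you prove uniqueness by an energy estimate in the class $\{u_N\in L^2_{V_N}:\dot u_N\in L^2_{V_N}\}$ rather than reading it off from the uniqueness of the ODE solution as the paper does, which is a harmless (and slightly more robust) supplement.
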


\begin{proof}
Substitute $u_N(t) = \sum_{i=1}^N u_i^N(t)\chi_i^t$ into \eqref{eq:galerkinODESystemWithL} to yield
\begin{align}
&\sum_{i=1}^N\dot u_i^N(t)l_{ij}(t) + u_i^N(t)(a_{ij}(t) + c_{ij}(t)) = f_{j}(t)\label{eq:proofGalerkinODESystem4}
\end{align}
with $l_{ij}(t) = l(t;\chi_i^t, \chi_j^t)$, $a_{ij}(t) = a(t; \chi_i^t,\chi_j^t)$, $\symbolForLittlec_{ij}(t) = \symbolForLittlec(t; \chi_i^t,\chi_j^t)$ 
and $f_{j}(t) = (f(t), \chi_j^t)_{H(t)}.$ Defining the vectors $(\mathbf{u^N(t)})_i = u_i^N(t)$ and $(\mathbf{F(t)})_i=f_{i}(t),$ and
matrices $(\mathbf{L(t)})_{ij} = l_{ji}(t),$ $(\mathbf{A(t)})_{ij} = a_{ji}(t)$, and $(\mathbf{\symbolForBigC(t)})_{ij} = \symbolForLittlec_{ji}(t)$, we can write \eqref{eq:proofGalerkinODESystem4} in matrix-vector form as
\[\mathbf{L(t)}\mathbf{\dot u^N(t)} + (\mathbf{A(t)+\symbolForBigC(t))u^N(t)} = \mathbf{F(t)}.\]
Elementary considerations show that $\mathbf{L(\cdot)}^{-1} \in L^\infty(0,T;\mathbb{R}^{N\times N}),$ so we can rearrange the system to
\begin{equation}\label{eq:proofGalerkinODESystem2}
\mathbf{\dot u^N(t)} + \mathbf{L(t)}^{-1}\mathbf{(A(t)+\symbolForBigC(t))u^N(t)} = \mathbf{L(t)}^{-1}\mathbf{F(t)}.
\end{equation}
Note that $\mathbf{F(\cdot)} \in L^2(0,T;\mathbb{R}^N)$
and $\mathbf{A(\cdot)+\symbolForBigC(\cdot)} \in L^\infty(0,T;\mathbb{R}^{N\times N})$. So the coefficients 
of \eqref{eq:proofGalerkinODESystem2} are all measurable in time, and we can apply standard theory that 
guarantees the existence and uniqueness of $u_j^N \in  AC([0,T])$ with $\dot u_j^N \in L^2(0,T)$, and thus the existence and uniqueness of $u_N$. The function $u_N \in \tilde{C}^1_V$ is a 
solution in the sense that the derivative $\dot u_N$ exists almost everywhere and the ODE is satisfied almost everywhere.
\end{proof}

The Galerkin approximation is equivalent to the discrete equation
\begin{equation}\label{eq:galerkinEquationRegularity}
l(t;\dot u_N(t), v_N(t)) + a(t;u_N(t),v_N(t)) + \symbolForLittlec(t;u_N(t),v_N(t)) = (f(t), v_N(t))_{H(t)} \tag{$\textbf{P}_{\textbf{d}}'$}
\end{equation}
for all $v_N \in L^2_{V_N}$. We look for \emph{a priori} estimates on $u_N$ and $\dot u_N$ in appropriate norms.

\begin{lemma}[{A priori} estimate on $u_N$]\label{lem:aPrioriOnUN}Under the hypotheses of Theorem \ref{thm:existenceWithL}, the following estimate holds:
\begin{equation*}
\norm{u_N}{L^2_V} \leq C\left(\norm{u_0}{H_0}+\norm{f}{L^2_{V^{*}}}\right).
\end{equation*}
\end{lemma}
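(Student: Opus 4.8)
The plan is to run the standard energy argument, testing the discrete equation against the solution itself. Since the discrete equation \eqref{eq:galerkinODESystemWithL} holds for each basis index $j$ and almost every $t$, I would multiply the $j$-th equation by the coefficient $u_j^N(t)$ and sum over $j$; because $u_N(t) = \sum_i u_i^N(t)\chi_i^t \in V_N(t)$, this amounts to testing \eqref{eq:galerkinEquationRegularity} with $v_N = u_N(t)$, yielding
\[ l(t;\dot u_N(t), u_N(t)) + a(t;u_N(t),u_N(t)) + \symbolForLittlec(t;u_N(t),u_N(t)) = (f(t), u_N(t))_{\Ht} \]
for almost every $t \in [0,T]$.

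The key is to rewrite the first term. Since $u_N \in \tilde C^1_V \subset W(V,V^*)$, Lemma \ref{lem:derivativeOfL} applies with $v = w = u_N$ and shows that $t \mapsto (L(t)u_N(t), u_N(t))_{\Ht}$ is absolutely continuous with
\[ l(t;\dot u_N(t), u_N(t)) = \tfrac{1}{2}\tfrac{d}{dt}(L(t)u_N(t), u_N(t))_{\Ht} - \tfrac{1}{2} m(t;u_N(t), u_N(t)). \]
I would substitute this in and integrate over $[0,s]$ for $s \in [0,T]$. The boundary term at $s$ is bounded below using the coercivity \eqref{eq:assCoercivityOfl} of $L(t)$, namely $(L(s)u_N(s), u_N(s))_{H(s)} \geq C_4\norm{u_N(s)}{H(s)}^2$, while the initial term is controlled by $\tfrac{C_3}{2}\norm{u_{0N}}{H_0}^2 \leq C\norm{u_0}{H_0}^2$ via \eqref{eq:assBoundednessOfl} and the bound \eqref{eq:assu0NBoundedInH0}. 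For the remaining terms I would use the coercivity \eqref{eq:assCoercivityOfa} of $a$ to extract $C_1\norm{u_N(t)}{\Vt}^2$ (at the cost of a $-C_2\norm{u_N(t)}{\Ht}^2$ term), the boundedness of $\symbolForLittlec(t;\cdot,\cdot)$ (Lemma \ref{lem:bilinearFormc}) to bound the $\symbolForLittlec$-term by $C\norm{u_N(t)}{\Ht}^2$, and the duality identity $(f(t),u_N(t))_{\Ht} = \langle f(t), u_N(t)\rangle_{\Vmt,\Vt} \leq \norm{f(t)}{\Vmt}\norm{u_N(t)}{\Vt}$ (legitimate since $f \in L^2_H \subset L^2_{V^*}$), handled with Young's inequality to produce $\varepsilon\norm{u_N(t)}{\Vt}^2 + C_\varepsilon\norm{f(t)}{\Vmt}^2$.

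The main obstacle is the term $m(t;u_N(t),u_N(t)) = \langle M(t)u_N(t), u_N(t)\rangle_{\Vmt,\Vt}$, which must not destroy the $V$-coercivity. Writing $M(t) = \dot{L}(t) + \symbolForBigC(t)L(t)$, the $\dot{L}$ part is estimated by \eqref{eq:assBoundednessOfdotL} as $C_5\norm{u_N(t)}{\Ht}\norm{u_N(t)}{\Vt}$, and the $\symbolForBigC(t)L(t)$ part by Lemma \ref{lem:bilinearFormc} together with \eqref{eq:assBoundednessOfl} as $C\norm{u_N(t)}{\Ht}^2$; crucially both carry at most one $V$-factor, so a further application of Young's inequality converts the whole $m$-term into $\varepsilon\norm{u_N(t)}{\Vt}^2 + C_\varepsilon\norm{u_N(t)}{\Ht}^2$. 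Choosing $\varepsilon$ small enough to absorb every $\varepsilon\norm{u_N(t)}{\Vt}^2$ contribution into the $C_1\int_0^s\norm{u_N(t)}{\Vt}^2$ term leaves an inequality of the form
\[ C_4\norm{u_N(s)}{H(s)}^2 + C\int_0^s \norm{u_N(t)}{\Vt}^2 \leq C\norm{u_0}{H_0}^2 + C\int_0^s \norm{u_N(t)}{\Ht}^2 + C\norm{f}{L^2_{V^*}}^2. \]
Dropping the nonnegative $V$-integral and applying Grönwall's inequality to $s \mapsto \norm{u_N(s)}{H(s)}^2$ gives the uniform bound $\sup_{s}\norm{u_N(s)}{H(s)}^2 \leq C(\norm{u_0}{H_0}^2 + \norm{f}{L^2_{V^*}}^2)$. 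Reinserting this into the displayed inequality (now retaining the $V$-integral and bounding $\int_0^T\norm{u_N}{\Ht}^2$ by $T$ times the supremum) yields $\norm{u_N}{L^2_V}^2 \leq C(\norm{u_0}{H_0}^2 + \norm{f}{L^2_{V^*}}^2)$, which is the claimed estimate after taking square roots.
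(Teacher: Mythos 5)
Your proposal is correct and follows essentially the same route as the paper: test with $v_N = u_N$, rewrite $l(t;\dot u_N,u_N)$ via the transport identity \eqref{eq:assDifferentiabilityOfl}, use coercivity of $L$ and $a$, boundedness of $\symbolForLittlec$ and $m$, Young's inequality, Gr\"onwall, and then reinsert the $H$-bound to recover the $V$-estimate. Your explicit decomposition of the $m$-term into its $\dot L$ and $\symbolForBigC L$ parts (so that each factor carries at most one $V$-norm and can be absorbed) is a detail the paper leaves implicit but uses in the same way elsewhere, so there is no substantive difference.
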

\begin{remark}{This \emph{a priori} estimate is still valid under the hypotheses of Theorem \ref{thm:existenceWithoutL} if we pick $u_N(0)$ differently. See \S \ref{sec:proofExistenceSecondGalerkin} for more.}
\end{remark}
For convenience, we shall sometimes omit the argument $(t)$ in expressions like $u_N(t)$. It should be clear 
from the context the instances in which we are referring to an element of $H(t)$ as opposed to an element of $L^2_H.$
\begin{proof}[Proof of Lemma \ref{lem:aPrioriOnUN}]
Picking $v_N = u_N$ in \eqref{eq:galerkinEquationRegularity} gives
\begin{equation*}
l(t;\dot u_N, u_N)+ a(t;u_N,u_N) +\symbolForLittlec(t;u_N,u_N) = (f,u_N)_{H(t)},
\end{equation*}
which we integrate in time and apply the transport identity \eqref{eq:assDifferentiabilityOfl} to yield
\begin{align*}
\int_0^T\frac{1}{2}\frac{d}{dt}l(t;u_N,u_N) +a(t;u_N,u_N)+\symbolForLittlec(t;u_N,u_N)&-\frac{1}{2}m(t;u_N, u_N)\\
&= \int_0^T(f,u_N)_{H(t)}.
\end{align*}
Using the boundedness \eqref{eq:assBoundednessOfl} and coercivity \eqref{eq:assCoercivityOfl} of $l(t;\cdot,\cdot)$ leads to
\begin{align*}
\frac{C_c}{2}\norm{u_N(T)}{H(T)}^2 + \int_0^Ta(t;u_N,u_N) &+\int_0^T\symbolForLittlec(t;u_N,u_N)-\frac{1}{2}\int_0^Tm(t;u_N,u_N)\\ &\leq \int_0^T \langle f,u_N\rangle_{\Vmt, \Vt} + \frac{C_b}{2}\norm{u_N(0)}{H_0}^2,
\end{align*}
to which we use \eqref{eq:assCoercivityOfa} (the coercivity of $a(t;\cdot,\cdot)$), the boundedness of 
$\symbolForLittlec(t;\cdot,\cdot)$ and $m(t;\cdot,\cdot)$, and Young's inequality with $\epsilon > 0$:
\begin{align*}
 \frac{C_c}{2}\norm{u_N(T)}{H(T)}^2 + \frac{C_1}{2}\norm{u_N}{L^2_V}^2
&\leq \frac{C_2}{2}\norm{u_N}{L^2_H}^2+\frac{1}{2\epsilon}\norm{f}{L^2_{V^*}}^2 + \frac{\epsilon}{2}\norm{u_N}{L^2_V}^2\\
&\quad+ \frac{C_b}{2}\norm{u_N(0)}{H_0}^2.
\end{align*}
That is,
\begin{equation}\label{eq:aPrioriEstimateUN1}
C_c\norm{u_N(T)}{H(T)}^2 + (C_1-\epsilon)  \norm{u_N}{L^2_V}^2 \leq \frac{1}{\epsilon}\norm{f}{L^2_{V^*}}^2 + C_2 \norm{u_N}{L^2_H}^2 + C_b\norm{u_N(0)}{H_0}^2,
\end{equation}
and if $\epsilon$ is picked small enough, we can discard the second term on the left hand side and then an application of Gronwall's inequality  yields
\[\norm{u_N(t)}{H(t)}^2 \leq C_4\left(\norm{f}{L^2_{V^*}}^2+\norm{u_N(0)}{H_0}^2\right).\]
Using this on \eqref{eq:aPrioriEstimateUN1} and 
{utilising \eqref{eq:assu0NBoundedInH0} produces the desired estimate.}
\end{proof}
\begin{lemma}[{A priori} estimate on $\dot u_N$]\label{lem:aPrioriOnDotUN} 
Under the hypotheses of Theorem \ref{thm:existenceWithL}, the following estimate holds:
\begin{equation*}
\norm{\dot u_N}{L^2_H}
\leq C\left(\norm{u_0}{V_0}+\norm{f}{L^2_H}\right).
\end{equation*}
\end{lemma}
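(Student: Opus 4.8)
The plan is to test the Galerkin equation \eqref{eq:galerkinEquationRegularity} with $v_N = \dot u_N$, which is a legitimate choice since $\dot u_N \in L^2_{V_N}$. This produces
\[
l(t;\dot u_N, \dot u_N) + a(t;u_N,\dot u_N) + \symbolForLittlec(t;u_N,\dot u_N) = (f, \dot u_N)_{H(t)}.
\]
The coercivity \eqref{eq:assCoercivityOfl} of $l(t;\cdot,\cdot)$ gives $l(t;\dot u_N,\dot u_N) \geq C_4\norm{\dot u_N}{\Ht}^2$, which is exactly the quantity we wish to control. The right-hand side and the term $\symbolForLittlec(t;u_N,\dot u_N)$ are handled straightforwardly by Cauchy--Schwarz and Young's inequality, since $f \in L^2_H$ and $\symbolForLittlec(t;\cdot,\cdot)$ is bounded on $H(t)\times H(t)$. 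The crux is the term $a(t;u_N,\dot u_N)$.

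To handle $a(t;u_N,\dot u_N)$, I would use the splitting $a = a_s + a_n$ from Assumptions \ref{asss:aAndasWithL}. The nonsymmetric part is controlled directly: by \eqref{eq:assBoundednessOfan}, $|a_n(t;u_N,\dot u_N)| \leq C_1\norm{u_N}{\Vt}\norm{\dot u_N}{\Ht}$, which absorbs into the coercive term via Young's inequality at the cost of $\norm{u_N}{L^2_V}^2$ (already bounded by Lemma \ref{lem:aPrioriOnUN}). For the symmetric part, the key is the differentiation formula \eqref{eq:assDifferentiabilityOfas}: since $u_N \in \tilde C^1_V$, we have
\[
\frac{d}{dt}a_s(t;u_N(t),u_N(t)) = 2a_s(t;u_N(t),\dot u_N(t)) + r(t;u_N(t)),
\]
so that $a_s(t;u_N,\dot u_N) = \tfrac12\frac{d}{dt}a_s(t;u_N,u_N) - \tfrac12 r(t;u_N)$. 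Integrating over $[0,T]$ turns the leading term into boundary contributions $\tfrac12 a_s(T;u_N(T),u_N(T)) - \tfrac12 a_s(0;u_{0N},u_{0N})$; the first is nonnegative by \eqref{eq:assPositivityOfas} and hence helpful on the left, while the second is bounded using \eqref{eq:assBoundednessOfas} by $C\norm{u_{0N}}{V_0}^2 \leq C\norm{u_0}{V_0}^2$ via \eqref{eq:assInitialDataApproximationBoundedInV}. The remainder $\int_0^T r(t;u_N)$ is absorbed using the bound \eqref{eq:assBoundednessOfF}, $|r(t;v)|\leq C_3\norm{v}{\Vt}^2$, again against the already-controlled $\norm{u_N}{L^2_V}^2$.

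Assembling these estimates, integrating in time, and choosing the Young's inequality parameter $\epsilon$ small enough to absorb the $\norm{\dot u_N}{\Ht}^2$ terms into the coercive left-hand side, we obtain
\[
\norm{\dot u_N}{L^2_H}^2 \leq C\left(\norm{u_0}{V_0}^2 + \norm{f}{L^2_H}^2 + \norm{u_N}{L^2_V}^2\right),
\]
and then invoking the prior estimate of Lemma \ref{lem:aPrioriOnUN} (noting $\norm{f}{L^2_{V^*}} \leq C\norm{f}{L^2_H}$ and $\norm{u_0}{H_0}\leq C\norm{u_0}{V_0}$) closes the bound in the desired form. The main obstacle is the correct treatment of $a_s(t;u_N,\dot u_N)$: this is precisely why the structural hypotheses \eqref{eq:assDifferentiabilityOfas} and \eqref{eq:assPositivityOfas} were introduced, and why $u_N$ was arranged to lie in $\tilde C^1_V$ so that the differentiation formula applies. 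One must also take care that the integration-by-parts / differentiation step is justified — here the absolute continuity \eqref{eq:assAbsCtyOfas} of $t\mapsto a_s(t;u_N(t),u_N(t))$ guarantees the fundamental theorem of calculus can be invoked.
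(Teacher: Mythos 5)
Your proposal is correct and follows essentially the same route as the paper's proof: testing with $v_N=\dot u_N$, using the coercivity of $l(t;\cdot,\cdot)$, splitting $a=a_s+a_n$, converting $a_s(t;u_N,\dot u_N)$ into a total derivative via \eqref{eq:assDifferentiabilityOfas} plus the remainder $r$, exploiting \eqref{eq:assPositivityOfas} and \eqref{eq:assBoundednessOfas} for the boundary terms, and closing with Young's inequality, Lemma \ref{lem:aPrioriOnUN}, and \eqref{eq:assInitialDataApproximationBoundedInV}. No discrepancies worth noting.
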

\begin{proof}
In \eqref{eq:galerkinEquationRegularity}, pick $v_N = \dot u_N$ and use \eqref{eq:assCoercivityOfl} to get
\begin{equation}\label{eq:proofAPrioriDotUN}
C_1\norm{\dot u_N}{\Ht}^2 + a_s(t;u_N,\dot u_N)+ a_n(t;u_N,\dot u_N) + \symbolForLittlec(t;u_N, \dot u_N) \leq (f,\dot u_N)_{\Ht}.
\end{equation}
Then using assumption \eqref{eq:assDifferentiabilityOfas}, \eqref{eq:proofAPrioriDotUN} is
\begin{align*}
C_1\norm{\dot u_N}{\Ht}^2 + \frac{1}{2}\frac{d}{dt}a_s(t;u_N,u_N) &\leq (f, \dot u_N)_{\Ht} + \frac{1}{2}r(t;u_N) -a_n(t; u_N, \dot u_N)\\
&\quad - \symbolForLittlec(t;u_N, \dot u_N).
\end{align*}
Integrating this yields
\begin{align*}
C_1&\int_0^T\norm{\dot u_N}{\Ht}^2 + \frac{1}{2}a_s(T;u_N(T),u_N(T))\\
&\leq \int_0^T (f,\dot u_N)_{\Ht}+ \frac{1}{2}\int_0^Tr(t;u_N)-\int_0^T a_n(t;u_N,\dot u_N) -\int_0^T\symbolForLittlec(t;u_N, \dot u_N)\\
&\quad+ \frac{1}{2}a_s(0;u_N(0),u_N(0)).
\end{align*}
where we used \eqref{eq:assAbsCtyOfas}. With \eqref{eq:assPositivityOfas} (positivity of $a_s(t;\cdot,\cdot)$), the bound \eqref{eq:assBoundednessOfas} 
on $a_s(0;\cdot,\cdot)$, the bound \eqref{eq:assBoundednessOfF} on $r(t;\cdot)$, the 
bound \eqref{eq:assBoundednessOfan} on $a_n(t;\cdot,\cdot),$ the bound on $\symbolForLittlec(t;\cdot, \cdot)$ and Young's inequality with $\epsilon > 0$ and $\delta > 0$, we get
\begin{align*}
C_1\norm{\dot u_N}{L^2_H}^2
&\leq \frac{1}{2\delta} \norm{f}{L^2_H}^2 + \left(C_2  + \frac{C_3}{2\epsilon}\right)\norm{u_N}{L^2_V}^2 + \frac{(\delta + C_3\epsilon)}{2}\norm{\dot u_N}{L^2_H}^2\\
&\quad+ C_4\norm{u_N(0)}{V_0}^2\\
&\leq \frac{1}{2\delta} \norm{f}{L^2_H}^2 + C_5\left(C_2+\frac{C_3}{2\epsilon}\right)(\norm{u_N(0)}{H_0}^2+\norm{f}{L^2_H}^2)\\
&\quad  + \frac{(\delta + C_3\epsilon)}{2}\norm{\dot u_N}{L^2_H}^2+ C_4\norm{u_N(0)}{V_0}^2 \tag{by the first \emph{a priori} bound}\\
&=\left(\frac{1}{2\delta} +C_5\left(C_2 +\frac{C_3}{2\epsilon}\right)\right)\norm{f}{L^2_H}^2 + C_5\left(C_2+\frac{C_3}{2\epsilon}\right)\norm{u_N(0)}{H_0}^2\\
&\quad  + \frac{(\delta + C_3\epsilon)}{2}\norm{\dot u_N}{L^2_H}^2 + C_4\norm{u_N(0)}{V_0}^2.
\end{align*}
If $\epsilon$ and $\delta$ are small, we can obtain the estimate by using the assumption \eqref{eq:assInitialDataApproximationBoundedInV}.
\end{proof}

\subsection{Proof of regularity}
By the estimates above, we obtain the convergence
\begin{equation}\label{eq:convergencesWithL}
\begin{aligned}
u_N &\weaklyto u \quad \text{in $L^2_V$}\\
\dot u_N &\weaklyto w \quad \text{in $L^2_{H}$}
\end{aligned}
\end{equation}
for some $u \in L^2_V$ and $w \in L^2_{H}$ and for a subsequence which we have relabelled. Now we show that in fact, $w = \dot u$.
\begin{lemma}In the context of the above convergence results, $w=\dot{u}$.
\end{lemma}
\begin{proof}
By definition
\begin{equation}\label{eq:proofConvergenceOfMaterialDerivative}
\int_0^T \langle \dot{u}_N(t), \eta(t)\rangle_{\Vmt, \Vt} =-\int_0^T (u_N(t), \dot{\eta}(t))_{\Ht} - \int_0^T \symbolForLittlec(t;u_N(t), \eta(t))
\end{equation}
holds for all $\eta \in \mathcal{D}_V(0,T).$ Since $\langle \cdot, \eta \rangle_{L^2_{V^*}, L^2_V}$, $(\cdot, \dot \eta)_{L^2_H}$, and $\langle \symbolForBigC(\cdot), \eta \rangle_{L^2_{V^*}, L^2_V}$
are all elements of $L^2_{V^*}$, using \eqref{eq:convergencesWithL}, we can pass to the limit in \eqref{eq:proofConvergenceOfMaterialDerivative} to obtain
\[\int_0^T \langle w(t), \eta(t)\rangle_{\Vmt, \Vt} =-\int_0^T (u(t), \dot{\eta}(t))_{\Ht} - \int_0^T \symbolForLittlec(t;u(t), \eta(t)),\]
i.e., $w = \dot{u}.$
\end{proof}
\begin{proof}[Proof of Theorem \ref{thm:existenceWithL}]
Given $v \in L^2_V,$ by density, there is a sequence $\{v_M\}$ with $v_M \in L^2_{V_M}$ for each $M$ such that 
\begin{align*}
v_M(t) = \sum_{j=1}^M \alpha^M_j(t)\chi_j^t \qquad \text{and} \qquad \norm{v_M - v }{L^2_V} \to 0.
\end{align*}
For $j=1, ..., N,$ consider the equation \eqref{eq:galerkinODESystemWithL}:
\[l(t;\dot u_N(t), \chi_j^t) + a(t;u_N(t),\chi_j^t) + \symbolForLittlec(t;u_N(t), \chi_j^t) = (f(t), \chi_j^t)_{\Ht}.\]
If $M \leq N$, then $v_M \in L^2_{V_N}$ and we multiply the above by $\alpha^M_j(t)$ and sum up to get
\begin{align*}
l(t;\dot u_N(t), v_M(t)) + a(t;u_N(t),v_M(t)) + \symbolForLittlec&(t;u_N(t), v_M(t))= (f(t), v_M(t))_{\Ht}.
\end{align*}
By the bounds on the respective bilinear forms, we see that
$\langle L(\cdot), v_M \rangle_{L^2_{V^*}, L^2_V}$, 
$\langle A(\cdot), v_M \rangle_{L^2_{V^*}, L^2_V}$, and 
$\langle \symbolForBigC(\cdot), v_M \rangle_{L^2_{V^*}, L^2_V}$
are elements of $L^2_{V^*}$, 
so we obtain after integrating the above equation and taking the limit as $N \to \infty$ the equation
\begin{align*}
\int_0^T l(t;\dot u(t), v_M(t)) + a(t;u(t),v_M(t)) + &\symbolForLittlec(t;u(t), v_M(t))\\
&= \int_0^T (f(t), v_M(t))_{\Ht}.
\end{align*}
Now note that as a function of $v_M$, each term in the above equation is an element of $L^2_{V^*}$ again because of the bounds on $l(t;\cdot,\cdot)$, $a(t;\cdot,\cdot)$ and $\symbolForLittlec(t;\cdot,\cdot).$
So we send $M \to \infty$, bearing in mind that $v_M$ strongly converges to $v$ in $L^2_V$:
\begin{align*}
\int_0^T l(t;\dot u(t), v(t)) + a(t;u(t),v(t)) + \symbolForLittlec&(t;u(t), v(t)) = \int_0^T ( f(t), v(t) )_{\Ht}.
\end{align*}
Hence $u \in W(V,H)$ is a solution. Let us now check the initial condition. Let $w \in V_0$, take $\zeta \in C^1[0,T]$ with $\zeta(T) = 0$, and set $v(t) = \zeta(t)\phi_tw$; we see that $v \in L^2_V.$ Since $w \in \Vs$, there exist coefficients $\alpha_j$ with $w = \sum_{j=1}^\infty \alpha_j \chi_j^0$, so
\begin{equation}\label{eq:proofOfIC1x}
v(t) = \zeta(t)\sum_{j=1}^\infty \alpha_j \chi_j^t.
\end{equation}
The sequence $\{v_N\}_{N \in \mathbb{N}}$ defined by
\begin{equation}\label{eq:proofOfIC2x}
v_N(t) =\zeta(t)\sum_{j=1}^N\alpha_j \chi_j^t
\end{equation}
is such that $v_N \in L^2_{V_N}$ and satisfies
$\norm{v_N-v}{L^2_V} \to 0$ 
by definition of $w$ as an infinite sum.
Similarly, we can show that $\dot v_N \to \dot v$ in $L^2_V.$ Using the identity \eqref{eq:assDifferentiabilityOfl} with $v$ chosen as in \eqref{eq:proofOfIC1x}, we see that
\begin{align}
\nonumber -l(0;u(0),  v(0))+&\int_0^Ta(t;u(t),v(t)) + \symbolForLittlec(t;u(t),v(t))\\
&= \int_0^T(f(t), v(t))_{H(t)} +l(t;u(t),\dot v(t))+ m(t;u(t),v(t)).\label{eq:iccompx}
\end{align}
Similarly, with $v_N$ chosen as in \eqref{eq:proofOfIC2x} in the Galerkin equation \eqref{eq:galerkinEquationRegularity}, to which we again apply \eqref{eq:assDifferentiabilityOfl} and integrate to obtain
\begin{align*}
-l(0;u_N(0),  &v_N(0)) + \int_0^Ta(t;u_N(t),v_N(t)) + \symbolForLittlec(t;u_N(t),v_N(t))\\
&\;\;= \int_0^T(f(t), v_N(t))_{H(t)}+l(t;u_N(t),\dot v_N(t))+m(t;u_N(t),v_N(t)).
\end{align*}
Using $u_N \weaklyto u,$ $v_N \to v,$ $\dot v_N \to \dot v,$ {and \eqref{eq:assu0NConvergesInV0}}, we may pass to the limit in this equation
and a comparison of the result to \eqref{eq:iccompx} will tell us that
\[l(0;u_0-u(0), \zeta(0)w )=0.\]
The arbitrariness of $w \in V_0$ and the density of $V_0$ in $H_0$ 
yield the result. 

The stability estimate follows directly from the estimates in Lemmas \ref{lem:aPrioriOnUN} and \ref{lem:aPrioriOnDotUN}. That the solution is unique follows by a straightforward adaptation of the standard technique.

\end{proof}

\subsection{Second sketch proof of existence}\label{sec:proofExistenceSecondGalerkin}
\begin{proof}[Sketch proof of Theorem \ref{thm:existenceWithoutL}]
{We can take the Galerkin approximation of \eqref{eq:defnWeakSolutionSecond} and instead of picking the initial data of $u_N$ to be $u_{0N}$ we pick $u_N(0) = P_N^0(u_0)$, where $P_N^0$ is the projection operator in Definition \ref{defn:projectionOperator}.} We still obtain the uniform bound of Lemma \ref{lem:aPrioriOnUN}, which
implies that
\begin{equation}
\begin{aligned}\label{eq:convergencesWithoutL}
u_N &\weaklyto u \quad \text{in $L^2_V$}
\end{aligned}
\end{equation}
for some $u \in L^2_V.$ An equation similar to \eqref{eq:galerkinEquationRegularity} will hold, in which we pick $v_N(t) = \chi_j^t$, where $j \in \{0, ..., N\}$, and multiplying by $\zeta \in C^1[0,T]$ with $\zeta(T) = 0$, we get
\begin{align*}
l(t;\dot u_N, \zeta\chi_j) + a(t;u_N,\zeta\chi_j) + \symbolForLittlec&(t;u_N,\zeta\chi_j)= \langle f, \zeta\chi_j \rangle_{\Vmt, \Vt},
\end{align*}
and then integrating, using the transport formula \eqref{eq:assDifferentiabilityOfl}, and passing to the limit 
with the help of \eqref{eq:convergencesWithoutL} and \eqref{eq:projectionOperatorConvergesInH}:
\begin{align}
\nonumber &\int_0^T l(t;u(t), \zeta'(t)\chi_j^t) + a(t;u(t),\zeta(t)\chi_j^t) +\symbolForLittlec(t;u(t),\zeta(t)\chi_j^t)-m(t;u(t),\zeta(t)\chi_j^t)\\
&\quad = -\int_0^T\langle f(t), \zeta(t)\chi_j^t \rangle_{\Vmt, \Vt}-l(0;u_0,\zeta(0)\chi_j^0).\label{eq:proofExistenceWithoutL1}
\end{align}
Now, 
we can write an arbitrary element of $V_0$ as $v = \sum_{i=1}^\infty \alpha_j \chi_j^0.$ 
By definition, the sequence
$v_n = \sum_{i=1}^n\alpha_j \chi_j^0$
converges to $v$ in $\Vs.$ 
It follows that $\phi_t v_n \to \phi_t v$ in $\Vt$. 
Letting $\zeta(0) = 0$, multiplying \eqref{eq:proofExistenceWithoutL1} by $\alpha_j$ and summing over $j$ gives us
\begin{align}
\nonumber \int_0^T &\zeta'(t) l(t;u(t),\phi_t v_n)\\
&= -\int_0^T \zeta(t) \langle f(t)-A(t)u(t)-\symbolForBigC(t)u(t)+M(t)u(t), \phi_t v_n \rangle_{\Vmt, \Vt}.\label{eq:proofExistenceWithoutL4}
\end{align}
It is not difficult to see that the dominated convergence theorem applies and we can pass to the limit in \eqref{eq:proofExistenceWithoutL4} to obtain
\begin{align*}
\int_0^T& \zeta'(t) l(t;u(t),\phi_t v)\\
&= -\int_0^T \zeta(t) \langle f(t)-A(t)u(t)-\symbolForBigC(t)u(t)+M(t)u(t), \phi_t v \rangle_{\Vmt, \Vt}.
\end{align*}
If we further let $\zeta \in \mathcal{D}(0,T)$, this is precisely the statement
\begin{align*}
\frac{d}{dt}l(t;u(t),\phi_t v) &= \langle f(t)-A(t)u(t)-\symbolForBigC(t)u(t)+M(t)u(t), \phi_t v \rangle_{\Vmt, \Vt}
\end{align*}
in the weak sense. This is true for every $v \in \Vs,$ and because $f-Au-\symbolForBigC u \in L^2_{V^*}$, by Lemma \ref{lem:temamType}, 
$L\dot u +A +\symbolForBigC u = f$
holds as an equality in $L^2_{V^*}$ with $u \in W(V,V^*)$. 
\end{proof}

%
%
\section*{Acknowledgements}Two of the authors (A.A. and C.M.E.) were participants of the Isaac Newton Institute programme \emph{Free Boundary Problems and Related Topics} (January -- July 2014) when this article was completed. A.A. was supported by the Engineering and Physical Sciences Research Council (EPSRC) Grant EP/H023364/1 within the MASDOC Centre for Doctoral Training. The authors would like to express their gratitude to the anonymous referees for their careful reading and valuable feedback.
\frenchspacing

\end{document}